\newcommand{\remove}[1]{#1}
\newtheorem{theorem}{Theorem} 
\newtheorem{proposition}{Proposition}[section]
\newtheorem{observation}[proposition]{Observation}
\newtheorem{lemma}[proposition]{Lemma}
\newtheorem{problem}[proposition]{Problem}
\theoremstyle{definition}
\newtheorem{remark}[proposition]{Remark}
\theoremstyle{plain}
\newcommand{\Long}[1]{} 
\DeclareMathOperator{\id}{id}
\DeclareMathOperator{\incl}{incl}
\DeclareMathOperator{\const}{const}
\newcommand{\s}{\sigma}
\renewcommand{\t}{\tau}
\DeclareMathOperator{\Cone}{Cone}
\newcommand{\argmin}{\mathrm{arg\, min}}
\def\:{\colon}
\def\Hopf{{\eta}}
\newcommand{\R}{\mathbb{R}}
\newcommand{\N}{\mathbb{N}}
\newcommand{\Z}{\mathbb{Z}}
\newcommand{\heading}[1]{\vspace{1ex}\par\noindent{\bf\boldmath #1}}
\def\indef#1{\emph{#1}}
\def\Zf{Z_r(f)}
\renewcommand{\Im}{\mathrm{Im}}
\newcommand{\zavr}{]}
\title{{On Computability and Triviality of Well Groups\footnote{This is an extended version of a paper that is to appear in the proceedings
of the Symposium on Computation Geometry 2015.} \footnote{This
research was supported 
by institutional support RVO:67985807 and by the People Programme (Marie Curie Actions) of the European Union's Seventh Framework Programme (FP7/2007-2013) under REA grant agreement n° [291734].}}
}
\author{Peter Franek, Marek Kr\v{c}\'al}
\begin{document}

{
%
%

\maketitle

\thispagestyle{empty}
\begin{abstract}
The concept of \emph{well group} in a special but important case captures homological properties of the zero set of a continuous map $f\:K\to \R^n$ on a compact space $K$ that are invariant with respect to perturbations of $f$. The perturbations are arbitrary continuous maps within $L_\infty$ distance $r$ from $f$ for a given $r>0$. 
The main drawback of the approach is that the computability of well groups was shown only when $\dim K=n$ or $n=1$. 

Our contribution to the theory of well groups is twofold: on the one hand we improve on the computability issue, but on the other hand we present a range of examples where the well groups are incomplete invariants, that is, fail to capture certain important robust properties of the zero set.

For the first part, we identify a computable subgroup of the well group that is obtained by cap product with the pullback of the orientation of $\R^n$ by $f$.
In other words, well groups can be algorithmically approximated from below. When $f$ is smooth and $\dim K<2n-2$, our approximation of  the $(\dim\ K-n)$th well group is exact.

For the second part, we find examples of maps $f,f'\:K\to \R^n$ with all well groups isomorphic but whose perturbations have different zero sets. We discuss on a possible replacement of the well groups of vector valued maps by an invariant of a better descriptive power and computability status.
\end{abstract}
}  
\newpage

\section{Introduction}
\label{s:intro}

In many engineering and scientific solutions, a highly desired property is the resistance against noise or perturbations.
We can only name a fraction of the instances: stability in data analysis~\cite{Carlsson:2009}, robust optimization~\cite{Ben:2009}, image processing~\cite{Goudail:2004}, or stability of numerical methods~\cite{Higham:2002}.
Some very important tools for robust design come from topology, which can capture stable properties of spaces and maps.

In this paper, we take the robustness perspective on the study of the solution set of systems of nonlinear equations, a fundamental problem 
in mathematics and computer science. 
Equations arising in mathematical modeling of real problems are usually inferred from observations, measurements or previous computations. 
We want to extract maximal information about the solution set, if an estimate of the error in the input data is given. 

More formally, for a continuous map \(f:K\to \R^n\) on a compact Hausdorff space $K$ and $r>0$ we want to study properties of the family of zero sets \begin{displaymath}
Z_r(f):=\{g^{-1}(0)\:\|f-g\|\le r\},
\end{displaymath}
where \(\|\cdot\|\) is the max-norm with respect to some fixed norm $|\cdot|$ in \(\R^n\). 
The functions $g$ with \(\|f-g\|\leq r\) (or $\|f-g\|<r$) will be referred to as \indef{$r$-perturbations of $f$} (or \indef{strict $r$-perturbations of $f$}, respectively). 
Quite notably, we are not restricted to \emph{parameterized} perturbations but allow arbitrary continuous functions at most (or less than) $r$ far from  $f$ in the max-norm.

\heading{Well groups.} Recently, the concept of well groups was developed to measure ``robustness of
intersection'' of a map $f\:K\to Y$ with a subspace $Y'\subseteq Y$~\cite{well-group}.

In the special but very important case when $Y=\R^n$ and $Y'=\{0\}$ it is a property of $\Zf$ that, informally speaking, captures ``homological properties'' that are common to all zero sets in $\Zf$.  We enhance the theory to include a~\emph{relative    case}\footnote{Authors of \cite{interlevel} develop a different notion of relativity that is based on considering a pair of spaces $(Y',Y'_0)$ instead of the single space $Y'$. This direction is rather orthogonal to the matters of this paper.} that is especially convenient in the case when $K$ is a manifold with boundary.
Let $B\subseteq K$ be a pair of compact Hausdorff spaces and $f: K\to\R^n$ continuous.  
Let 
$X:=|f|^{-1} [0,r]$ where $|f|$ denotes the function
$x\mapsto |f(x)|$; this is the smallest space containing zero sets of all $r$-perturbations $g$ of $f$. In the rest of the paper, for any space $Y\subseteq K$ we will abbreviate the pair \((Y,Y\cap B\)) by \((Y,B)\) and, similarly for homology, \(H_*(Y,Y\cap B)\)) by \(H_*(Y,B)\). Everywhere in the paper we use homology and cohomology groups with coefficients in $\Z$ unless explicitly stated otherwise. For brevity we omit the coefficients from the notation. 

The $j$th well group $U_j(f,r)$ of $f$ at radius $r$ is the subgroup of $H_j(X,B)$ defined by
$$
U_j(f,r):=\bigcap_{Z\in Z_r(f)} \, \mathrm{Im}\big (H_j(Z, B)\stackrel{i_*}{\longrightarrow} H_j(X,B)\big),
$$
where $i_*$ is induced by the inclusion $i\:g^{-1}(0)\hookrightarrow X$ and $H$ refers to a convenient homology theory of compact metrizable spaces that we describe
below.\footnote{In \cite{well-group,interlevel}, well groups were defined by means of singular homology. But then, once we allow arbitrary continuous perturbations, to the best of our knowledge, no $f\:K\to\R^n$ with nontrivial $U_j(f,r)$ for $j>0$ would be known. In particular, the main result of \cite{interlevel} would not hold. The correction via means of Steenrod homology was independently identified by the authors of \cite{interlevel}.} For a simple example of a map $f$ with $U_1(f,r)$ nontrivial see Figure~\ref{f:well}.

\heading{Significance of well groups.} We only mention a few of many interesting things mostly related to our setting. The well group in dimension zero characterizes robustness of solutions of a system of equations $f(x)=0$. Namely, $\emptyset\in Z_r(f)$ if and only if $U_0(f,r)\cong 0$. Higher well groups capture additional robust topological properties of the zero set such as in Figure~\ref{f:well}. Perhaps the most important is their ability to form \emph{well diagrams} \cite{well-group}---a kind of measure for robustness of the zero set (or more generally, robustness of the intersection of $f$ with other subspace $Y'\subseteq Y)$. The well diagrams are stable with respect to taking perturbations of $f$.\footnote{Namely, so called \emph{bottleneck distance} between a well diagrams of $f$ and $f'$ is bounded by $\|f-f'\|$. The stability does not say how well the well diagrams describe the zero set. This question is also addressed in this paper.} 
\heading{Homology theory.} For the foundation of well groups we need a homology theory on compact Hausdorff spaces
that satisfies some additional properties that we specify later in Section~\ref{s:cap}. Roughly
speaking, we want that the homology theory behaves well with
respect to infinite intersections. Without these properties we would have to consider only ``well behaved'' perturbations of a given $f$ in order to be able to obtain some nontrivial well groups in dimension greater than zero. We explain this in more detail also in Section~\ref{s:cap}.
For the moment it is enough to say that  the \emph{\v Cech} homology can be used and that for any computational purposes it behaves 
like simplicial homology. 
In Section~\ref{s:cap} we explain why using singular homology would make the notion of well groups trivial. 

A basic ingredient of our methods is the notion of \emph{cap product} 
$$
\frown: H^n(X,A)\otimes H_k(X,A\cup B)\to H_{k-n}(X,B)
$$
between cohomology and homology. 
We refer the reader to~\cite[Section 2.2]{prasolov} and~\cite[p. 239]{Hatcher} for its properties
and to Appendix~\ref{a:Cech} for its construction in \v Cech (co)homology. 
Again, it behaves like the simplicial cap product when applied to simplicial complexes. For an algorithmic implementation, one 
can use its simplicial definition from~\cite{prasolov}.

\subsection{Computability results}    
\begin{figure}
\begin{center}
\includegraphics[scale=1.1]{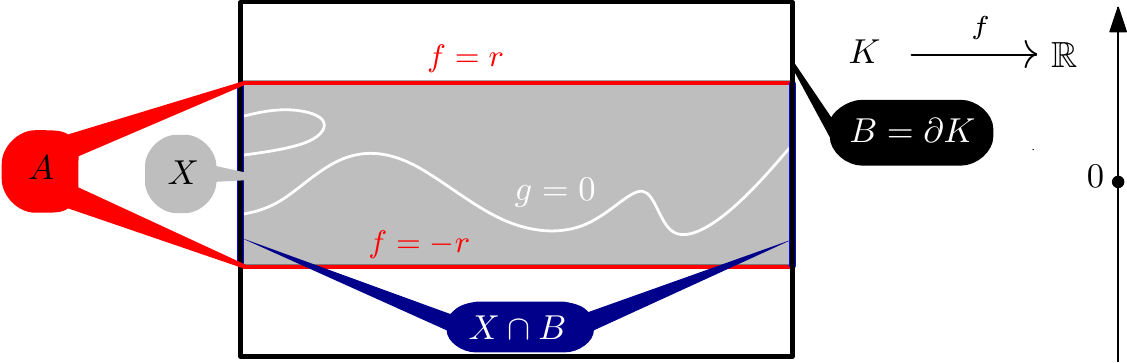}
\end{center}
\caption{For the projection $f(x,y)=y$ to the vertical axis defined on a box $K$,
the zero set of every $r$-perturbation is contained in $X=|f|^{-1}[0,r]$ and $\partial X$ consists of $A$ (upper and lower 
side) where $|f|=r$, and $X\cap B\subseteq\partial K$. The zero set always separates the two components of $A$. On the homological level, the zero set ``connects'' the two components of $X\cap B$ and the image of $H_1(g^{-1}(0),B)$ in $H_1(X,B)$ is always surjective and thus $U_1(f,r)\cong H_1(X,B)$. Note that the well group would be trivial with $B=\emptyset$.}
\label{f:well}
\end{figure}


\heading{Computer representation.}
To speak about computability, we need to fix some computer representation of the input. Here we assume the simple but general setting of~\cite{nondec}, namely, $K$ is a finite simplicial
complex, $B\subseteq K$ a subcomplex, $f$ is simplexwise linear with rational values on vertices\footnote{We emphasize that the considered $r$-perturbations of $f$ need not be neither simplexwise linear nor have rational values on the vertices.} and
the norm $|\cdot|$ in $\R^n$ can be (but is not restricted to) $\ell_1,\ell_2$
or $\ell_\infty$ norm. 

\heading{Previous results.} The algorithm for the computation  of well groups was developed only  in the particular cases
of $n=1$~\cite{interlevel} or $\dim K=n$~\cite{chazal}. 
In \cite{nondec} we  settled the computational complexity of the well group $U_0(f,r)$. The complexity is essentially identical to deciding whether the restriction $f|_A\:A\to S^{n-1}$ can be extended to $X\to S^{n-1}$ for $A=|f|^{-1}(r)$, or equivalently, $A=f^{-1}(S^{n-1})$. 
The extendability problem can be decided as long as $\dim K\leq 2n-3$ or $n=1,2$ or $n$ is even. On the contrary, the extendability of maps into a sphere---as well as triviality of $U_0(f,r)$---cannot be decided for $\dim K\ge 2n-2$ and $n$ odd, see \cite{nondec}.\footnote{We cannot even approximate the ``robustness of roots'': it is undecidable, given a~simplicial complex $K$ and a simplexwise linear map $f\:K\to\R^n$, whether there exists $\epsilon>0$ such that $U_0(f,\epsilon)$ is nontrivial or whether $U_0(f,1)$ is trivial. The extendability can always be decided for $n$ even, however, the problem is less likely tractable for $\dim K>2n-2$.} 
In this paper we shift our attention to higher well groups. 

\heading{Higher well groups---extendability revisited.}  
The main idea of our study of well groups is based on the following.
We try to find $r$-perturbations of $f$ with as small zero set
as possible, that is, avoiding zero on $X'$ for $X'\subseteq
X$ as large as possible. 
We will show in Lemma~\ref{l:perturb-ext} that for
each strict 
$r$-perturbation $g$ of $f$ we can find an extension $e\:X\to\R^n$
of $f|_A$ with $g^{-1}(0)=e^{-1}(0)$ and vice versa. Thus
equivalently, we try to extend $f|_A$  to a map $X'\to S^{n-1}$ for $X'$ as
large as possible. The higher skeleton\footnote{The $i$-skeleton $X^{(i)}$ of a simplicial (cell) complex $X$ is the subspace of $X$ containing all simplices (cells) of dimension at most $i$.} of $X$ we cover, the more well groups we kill.
\begin{observation}\label{o:triv}
Let $f\:K\to \R^n$ be a map on a compact space. Assume that the pair of spaces $A\subseteq X$ defined as $|f|^{-1}(r)\subseteq |f|^{-1}[0,r]$, respectively, can be triangulated and $\dim X=m$. If the map $f|_A$ can be extended to a map $A\cup X^ {(i-1)} \to S^{n-1}$ then $U_j(f,r)$ is trivial for $j>m-i$.
\end{observation}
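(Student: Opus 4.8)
The plan is to combine Lemma~\ref{l:perturb-ext} (the correspondence between strict $r$-perturbations $g$ and extensions $e\colon X\to\R^n$ of $f|_A$ with $g^{-1}(0)=e^{-1}(0)$) with a skeleton-pushing argument. Suppose $f|_A$ extends to $h\colon A\cup X^{(i-1)}\to S^{n-1}$. First I would extend $h$ arbitrarily (as a continuous map into $\R^n$, which is possible since $\R^n$ is contractible, or even cellwise) to a map $e\colon X\to\R^n$; this $e$ agrees with $f$ on $A$, so $e$ is within distance $r$ of $f$ (after a harmless rescaling to make the perturbation strict, or by noting $\|f-e\|\le r$ suffices up to the usual limiting argument over radii $r'<r$). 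The key point is that $e^{-1}(0)\subseteq X$ avoids $A\cup X^{(i-1)}$, because on that subcomplex $e$ takes values in $S^{n-1}$, which misses $0$. Hence $Z:=e^{-1}(0)$ is contained in $X\setminus X^{(i-1)}$, a space which deformation retracts onto a complex of dimension at most $m-i$ (the "dual skeleton" of the complementary cells).

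The second step is the homological consequence. Since $Z\subseteq X\setminus X^{(i-1)}$ and the latter has the homotopy type of an at-most-$(m-i)$-dimensional complex, $H_j(Z,B)=H_j(Z, Z\cap B)$ vanishes for $j>m-i$ — here I would be slightly careful about the relative version, using that $Z\cap B\subseteq B\setminus X^{(i-1)}$ sits inside the corresponding complement in $B$, so the long exact sequence of the pair together with the dimension bounds on both $Z$ and $Z\cap B$ forces $H_j(Z,B)=0$ for $j>m-i$. Consequently the image of $H_j(Z,B)\to H_j(X,B)$ is zero for such $j$. By definition $U_j(f,r)$ is the intersection over all $Z'\in Z_r(f)$ of such images, so it is contained in this particular zero image, hence $U_j(f,r)$ is trivial for $j>m-i$.

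The one technical obstacle I anticipate is making the dimension-counting argument for $X\setminus X^{(i-1)}$ rigorous in the \v{C}ech/Steenrod homology setting used for well groups, and handling the relative term $Z\cap B$ cleanly. The cleanest route is probably: $X\setminus X^{(i-1)}$ is an open subset of $X$ which (being a union of open cells of the triangulation together with the interiors of the cells of dimension $\ge i$) admits a deformation retraction onto the order complex of the poset of simplices of dimension $\ge i$ — a simplicial complex of dimension $m-i$. Since $Z$ is compact in this open set, it lies in a compact neighborhood that also retracts into this $(m-i)$-complex, and \v{C}ech homology of a compact subset of such a complex vanishes above dimension $m-i$ by the continuity property of \v{C}ech homology (inverse limit over neighborhoods, each of dimension $\le m-i$). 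The relative statement then follows from the five-lemma applied to the map of long exact sequences of $(Z,Z\cap B)\to(X,B)$, or more directly from the fact that both $H_j(Z)$ and $H_{j-1}(Z\cap B)$ vanish in the relevant range. Finally, I would remark that the strictness of the perturbation and the passage between $\le r$ and $<r$ is routine (one may shrink $e$ slightly towards a generic value, or invoke Lemma~\ref{l:perturb-ext} directly), so it does not affect the conclusion.
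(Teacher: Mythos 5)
Your overall strategy — push the zero set off the $(i-1)$-skeleton by extending $h$ to all of $X$ and then pass through Lemma~\ref{l:perturb-ext} — is the same as the paper's, but there is a genuine gap in the homological step. You claim that because $Z$ is a compact subset of $X\setminus X^{(i-1)}$, and the latter has the homotopy type of an $(m-i)$-complex, one gets $H_j(Z,Z\cap B)=0$ for $j>m-i$. This inference is false: a compact subspace of a space with low homological dimension need not itself have low homological dimension. For instance, a small PL-embedded sphere $S^{m-i+1}$ lying inside a top cell of $X$ misses $X^{(i-1)}$, yet has $H_{m-i+1}\neq 0$; for a concrete realisation take $X$ a coarsely triangulated $3$-manifold, $i=2$, and $Z\cong S^2$ sitting inside a single tetrahedron. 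The ``inverse limit over neighborhoods'' remark does not rescue this, because those neighborhoods are open subsets of the $m$-complex $X$ — they only \emph{retract} onto an $(m-i)$-complex, they are not themselves $(m-i)$-dimensional, so \v{C}ech continuity gives no such bound.

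There are two ways to close the gap. The first, which is what the paper does (Lemma~\ref{l:triv}), is not to take an arbitrary extension of $h$ but the specific \emph{join} extension $g:=h*\boldsymbol{0}_Y$ with $Y$ the dual complex to $A\cup X^{(i-1)}$ in $X$ (constructed in Lemma~\ref{l:dual}); then $g^{-1}(0)=Y$ is literally an $(m-i)$-dimensional subcomplex, $Y\cap B$ is a subcomplex of it, and $H_j(Y,Y\cap B)=0$ for $j>m-i$ is immediate from the relative simplicial chain complex having no generators in those degrees. The second fix keeps your arbitrary extension but replaces the false claim about $H_j(Z)$ by a \emph{factoring} argument: what you actually need is that the \emph{image} of $H_j(Z,Z\cap B)\to H_j(X,B)$ vanishes, and this map factors through $H_j(X\setminus X^{(i-1)},(X\setminus X^{(i-1)})\cap B)$; one then argues that the complement-of-skeleton pair has vanishing $H_j$ above $m-i$ by retracting both $X\setminus X^{(i-1)}$ and $(X\cap B)\setminus X^{(i-1)}$ compatibly onto dual complexes. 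The second route works but requires a compatibility check that the paper sidesteps by controlling the zero set directly, which is why the paper's construction via Lemma~\ref{l:dual} is the cleaner argument.
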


Assume, in addition, that there is no extension $A\cup X^{(i)}\to S^{n-1} $. By the connectivity of the sphere  $S^{n-1}$, we have $i\ge n$. Does the lack of extendability to $X^{(i)}$ relate to higher well groups, especially $U_{m-i}(f,r)$? The answer is \emph{yes} when $i=n$ as we show in our computability results below. On the other hand, when $i>n$, the lack of extendability \emph{is not} necessarily reflected by $U_{m-i}(f,r)$. This leads to the incompleteness results we show in the second part of the paper.

\heading{The first obstruction.} The lack of extendability of $f|_A$ to the $n$-skeleton is measured by the so called \emph{first obstruction} that is defined in terms of cohomology theory as follows. We can view $f$ as a map of pairs $(X,A)\to (B^n, S^{n-1})$ where
$B^n$ is the ball bounded by the sphere $S^{n-1}:=\{x\:|x|= r\}$.
Then the first obstruction $\phi_f$ is equal to the pullback $ f^*(\xi)\in H^n(X,A)$ of the fundamental cohomology class $\xi^n\in H^n(B^n,S^{n-1})$.
\footnote{This is the global description of the first obstruction as presented in \cite{whitehead-obstructionTheory}. It can be shown that $\phi_f$ depends on the homotopy class of $f|_A$ only.
Another way of defining the first obstruction is the following. It is represented by the so-called \emph{obstruction cocycle} $z_f\in Z^n(X,A)$ that assigns to each $n$-simplex $\s\in X$ the element $[f|_{\partial\s}]\in\pi_{n-1}(S^{n-1}) \cong \Z$~\cite[Chap. 3]{prasolov}. Through this definition it is not difficult to derive that the map $f|_A$ can be extended to $X^{(n)}\to S^{n-1}$ if and only if $\phi_f=0$, see also~\cite[Chap. 3]{prasolov}.}

\begin{theorem}
\label{t:cap}
Let $B\subseteq K$ be compact spaces and let $f: K\to\R^n$ be continuous.
Let $|f|^{-1}[0,r]$ and $ |f|^{-1}(r)$ be denoted by $X$ and $A$, respectively, and   $\phi_f$ be the first obstruction.
Then $\phi_f\frown H_k(X,A\cup B)$ is a subgroup of $U_{k-n}(f,r)$ for each $k\geq n$.
\end{theorem}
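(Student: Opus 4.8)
The plan is to exhibit, for every $r$-perturbation $g$ of $f$, a factorization of cap product with $\phi_f$ through the homology of the zero set $Z=g^{-1}(0)$. Fix a strict $r$-perturbation $g$; by Lemma~\ref{l:perturb-ext} we may replace $g$ by an extension $e\colon X\to\R^n$ of $f|_A$ with $e^{-1}(0)=Z$. Viewing $e$ as a map of pairs $(X,A)\to(\R^n,\R^n\setminus\{0\})$ and also as $(X,X\setminus Z)\to(\R^n,\R^n\setminus\{0\})$, the key observation is that the first obstruction $\phi_f=f^*(\xi)=e^*(\xi)$ actually lifts to a class $\tilde\phi\in H^n(X,X\setminus Z)$, namely $\tilde\phi=e^*(\xi_0)$ where $\xi_0\in H^n(\R^n,\R^n\setminus\{0\})$ is the generator; the image of $\tilde\phi$ under $H^n(X,X\setminus Z)\to H^n(X,A)$ is $\phi_f$, since $A\subseteq X\setminus Z$ and $\xi_0$ restricts to $\xi$. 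This is the crux of the argument and the step I expect to be the main obstacle: one must set up the \v Cech cohomology long exact sequences of the relevant pairs carefully (using that $A$, $X\setminus Z$, etc. are suitably nice—compact or open in $X$) and check the naturality of cap product with respect to the map $e$ and with respect to the change-of-pair maps, so that the following diagram commutes.

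Granting that lift, the argument is a diagram chase using naturality of the cap product. Consider
$$
\begin{diagram}
H^n(X,X\setminus Z)\otimes H_k(X,A\cup B) & \rTo & H^n(X,X\setminus Z)\otimes H_k(X,(X\setminus Z)\cup B) \\
\dTo^{\frown} & & \dTo_{\frown} \\
H_{k-n}(X,B) & & H_{k-n}(X\setminus(X\setminus Z),B)
\end{diagram}
$$
More precisely, excision/naturality gives that $\tilde\phi\frown(-)$ sends $H_k(X,A\cup B)$ into the image of $H_{k-n}(Z,B)\to H_{k-n}(X,B)$: cap product with a class supported on the pair $(X,X\setminus Z)$ lands in the homology of $Z$ relative to $B$, because capping "localizes" to where the cohomology class lives. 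Meanwhile, on the class $\phi_f\in H^n(X,A)$ we have $\phi_f\frown\alpha=\tilde\phi\frown\alpha$ for $\alpha\in H_k(X,A\cup B)$, since $\phi_f$ is the image of $\tilde\phi$ and the two cap products $H^n(X,A)\otimes H_k(X,A\cup B)\to H_{k-n}(X,B)$ and $H^n(X,X\setminus Z)\otimes H_k(X,A\cup B)\to H_{k-n}(X,B)$ agree after pulling back along $A\hookrightarrow X\setminus Z$ (this again is naturality of $\frown$ in the "$A$-slot", using $A\subseteq X\setminus Z$ so that $H_k(X,A\cup B)\to H_k(X,(X\setminus Z)\cup B)$ is defined and compatible).

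Combining the two displayed facts: for every $\alpha\in H_k(X,A\cup B)$ and every $r$-perturbation $g$ with zero set $Z$, the element $\phi_f\frown\alpha$ lies in $\Im\big(H_{k-n}(Z,B)\to H_{k-n}(X,B)\big)$. Since this holds for all $Z\in Z_r(f)$, we get $\phi_f\frown\alpha\in\bigcap_{Z}\Im(H_{k-n}(Z,B)\to H_{k-n}(X,B))=U_{k-n}(f,r)$, which is exactly the claim. Two technical points to handle along the way: first, reducing from arbitrary $r$-perturbations to strict ones (a limiting/continuity argument, or the standard observation that $Z_r(f)$ is, up to the intersection defining $U_j$, controlled by strict perturbations—this should already be implicit in the surrounding text); and second, checking that all cap products and exact sequences used are available and natural in the chosen \v Cech-type homology theory, for which I would invoke the properties collected in Section~\ref{s:cap} and Appendix~\ref{a:Cech}.
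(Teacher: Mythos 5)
Your structural idea—lift $\phi_f$ to $\tilde\phi=e^*(\xi_0)\in H^n(X,X\setminus Z)$ and argue that cap product with a class ``supported on $Z$'' lands in the image of $H_{k-n}(Z,B)$—is the right intuition, and it is indeed the conceptual content of the paper's proof. However, the step you flag as ``the main obstacle'' is where the argument has a genuine gap, and it cannot be filled by ``naturality of cap product'' and ``careful long exact sequences'' alone. Naturality of $\frown$ runs the wrong way for your purposes: the identity $j_*(j^*\tilde\phi\frown\beta')=\tilde\phi\frown j_*(\beta')$ for the inclusion $j\colon Z\hookrightarrow X$ shows that classes \emph{already coming from} $H_{k-n}(Z,B)$ behave well, but it does not produce a preimage in $H_{k-n}(Z,B)$ for $\tilde\phi\frown\alpha$ with $\alpha$ an arbitrary class on $X$. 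To produce such a preimage you must approximate $Z$ by a nested sequence of compact neighborhoods $X_i$ with $\bigcap_i X_i=Z$, carry out the cap product in each $(X_i,A_i,B_i)$ via excision, and then invoke the \emph{weak continuity} of the homology theory (surjectivity of $H_*(\bigcap X_i,B)\to\varprojlim H_*(X_i,B)$) to lift the resulting inverse-limit element down to $H_{k-n}(g^{-1}(0),B)$. This is precisely the zig-zag argument in the paper's proof, and weak continuity is not a routine bookkeeping property: it is exactly what distinguishes \v{C}ech from singular homology here. Your argument as written never uses any special feature of \v{C}ech theory, so if it were correct it would equally prove the statement for singular homology—but the paper's Warsaw-circle discussion shows that the statement is \emph{false} for singular homology. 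That is the hard diagnostic that the localization claim, as you have phrased it, cannot be a formal consequence of naturality and excision.

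Two smaller points. First, your reduction from arbitrary $r$-perturbations to strict ones is not a ``standard observation implicit in the surrounding text'': the paper handles the boundary case $\|g-f\|=r$ by a separate argument with intermediate maps $h_i=\epsilon_i f+(1-\epsilon_i)g$, where the strong excision axiom is actually needed (the strict case only needs ordinary excision). Second, your identification $\phi_f=e^*(\xi)$ for an extension $e$ of $f|_A$ is correct, but it deserves a one-line justification: by naturality of the connecting homomorphism, $f^*(\xi)=\delta\bigl((f|_A)^*\zeta\bigr)$ for the generator $\zeta\in H^{n-1}(S^{n-1})$, so $\phi_f$ depends only on $f|_A$, hence equals $e^*(\xi)$.
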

Our assumptions on computer representation allow for simplicial approximation of $X,A$ and $f$. The pullback of $\xi^n\in H^n(B^n,S^{n-1})$ and the cap product can be computed by the standard formulas. This together with more details worked out in the proof in Section~\ref{s:cap} gives the following.
\begin{theorem}\label{o:comp}
Under the assumption on computer representation
of $K, B$ and $f$ as above, the subgroup $\phi_f\frown H_k(X,A\cup B)$ of $U_{k-n}(f,r)$ (as in Theorem~\ref{t:cap}) can be computed. \end{theorem}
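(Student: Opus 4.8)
The plan is to unwind the proof of Theorem~\ref{t:cap} and observe that every object appearing in it admits a finite simplicial description computable from the input $(K,B,f,r)$. I would proceed in four steps.

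First, I would build a simplicial model of the pair $A\subseteq X$. Since $f$ is simplexwise linear with rational vertex values and $|\cdot|$ is one of the standard norms, the function $|f|\colon K\to\R$ is piecewise linear (piecewise the max, sum, or — for $\ell_2$, after a routine argument — a function whose sublevel set $|f|^{-1}[0,r]$ is still a computable polyhedron; I would either restrict to $\ell_1,\ell_\infty$ or subdivide finely enough to PL-approximate the $\ell_2$ ball, invoking the simplicial approximation already promised in the paragraph preceding the statement). Consequently $X=|f|^{-1}[0,r]$ and $A=|f|^{-1}(r)$ are computable polyhedra: one subdivides $K$ along the preimage of the relevant hyperplanes/facets of the norm ball of radius $r$, and extracts the subcomplex on which $|f|\le r$; the boundary piece where $|f|=r$ gives $A$, and $X\cap B$ is the corresponding subcomplex of (the subdivision of) $B$. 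All of this is a finite computation on rational data.

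Second, I would compute the first obstruction $\phi_f\in H^n(X,A)$. Viewing $f$ as a map of pairs $(X,A)\to(B^n,S^{n-1})$, the class $\phi_f=f^*(\xi^n)$ is represented, as recalled in the footnote to the definition of the first obstruction, by the obstruction cocycle $z_f\in Z^n(X,A)$ sending each $n$-simplex $\sigma$ to the degree $[f|_{\partial\sigma}]\in\pi_{n-1}(S^{n-1})\cong\Z$. On our simplicial model this degree is the signed count of preimages of a generic point under the simplexwise-linear map $f|_{\partial\sigma}/|f|\colon\partial\sigma\to S^{n-1}$, a finite determinant/sign computation; after a subdivision making $f$ nonzero on the relevant skeleton this is well defined. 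Thus $z_f$, hence the cohomology class $\phi_f$, is computable by linear algebra over $\Z$ (Smith normal form of the relative coboundary matrices of $(X,A)$).

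Third, I would compute the cap product homomorphism $\phi_f\frown(-)\colon H_k(X,A\cup B)\to H_{k-n}(X,B)$. Using the simplicial formula for $\frown$ from~\cite[Section 2.2]{prasolov} (front-face/back-face on an ordered simplicial complex), capping with the fixed cocycle $z_f$ is an explicit $\Z$-linear map on chains, descending to homology; its matrix with respect to chosen bases of the relative homology groups — themselves obtained by Smith normal form of the boundary matrices of the pairs $(X,A\cup B)$ and $(X,B)$ — is computable. The image of this matrix is $\phi_f\frown H_k(X,A\cup B)$, and we output it (as a subgroup of $H_{k-n}(X,B)$, e.g. by generators) for each $k\ge n$.

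The main obstacle is not conceptual but bookkeeping: ensuring that a \emph{single} subdivision simultaneously (a) triangulates $X$ and $A$ faithfully, (b) makes $f$ simplexwise linear and nonvanishing on the $n$-skeleton of $X$ relative to $A$ so that the obstruction cocycle is honestly defined, and (c) is compatible with the subcomplex structure of $B$; together with checking that the \v{C}ech (co)homology of these polyhedral pairs agrees with the simplicial one (true since they are ANRs/polyhedra, as the excerpt's remarks on the homology theory already grant) so that the computed object really is the subgroup named in Theorem~\ref{t:cap}. Once that is set up, every remaining step is integer linear algebra, so the algorithm terminates and is correct.
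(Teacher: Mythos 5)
Your plan follows the same route as the paper's proof: triangulate the pair $(X,A)$ and the subcomplex $B\cap X$, produce a simplicial representative of $\phi_f$, and then apply the explicit simplicial cap-product formula and Smith normal form. The one place where you genuinely diverge is the computation of $\phi_f$ itself. The paper first forms a computable simplicial approximation $f'\colon A'\to S'$ of $f|_A$ into the boundary $S'$ of the $n$-dimensional cross polytope (this is extracted from \cite{nondec}), then extends it to a simplicial map $(X',A')\to (B',S')$ by sending every vertex of $X'\setminus A'$ to the interior point $0$, and finally pulls back the fundamental cochain of $(B',S')$; this is automatically a well-defined cochain computation regardless of where $f$ vanishes inside $X$. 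You instead compute the obstruction cocycle directly as $z_f(\sigma)=\deg(f|_{\partial\sigma})$ on each $n$-simplex $\sigma$ of $X'$ rel $A'$. That formula is correct (it is exactly the footnoted description of the first obstruction), but it silently presupposes that $f$ is nonvanishing on the $(n-1)$-skeleton of $X'$ relative to $A'$, which a raw subdivision will not guarantee: for simplexwise-linear $f$ the zero set is a polyhedron that can meet low-dimensional skeleta of the chosen triangulation. You flag this with ``after a subdivision making $f$ nonzero on the relevant skeleton,'' but making that precise requires either a generic perturbation of the vertex placement or a replacement of $f$ by some extension of $f|_A$ on the $(n-1)$-skeleton; the paper's construction via the cross-polytope approximation and the trivial extension to interior vertices buys exactly this for free, and also handles the $\ell_2$ norm case through \cite[Lemma~3.4]{nondec} rather than the ad hoc PL approximation you sketch. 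With that wrinkle patched, the two arguments compute the same cochain class and everything else---identification of \v{C}ech with simplicial (co)homology on polyhedra, the Smith normal form bookkeeping---is as you describe.
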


\heading{The gap between $U_{k-n}$ and $\phi_f\frown H_k(X,A\cup B)$.} There are maps $f$ with $\phi_f$ trivial but nontrivial $U_0(f,r)$.\footnote{This is the case for $f\:\R^4\to\R^3$ given by $f(x):=|x|\eta(x/|x|)$ where $\eta\:S^3\to S^2$ is the Hopf map.} But this can be detected by the above mentioned extendability criterion. We do not present an example
where $U_{k-n}(f,r)\neq\phi_f\frown H_k(X,A\cup B)$ for $k-n>0$, although the inequality is possible in general.
In the rest of the paper we work in the other direction to show that there is no gap in various cases and various dimensions.

An important instance of Theorem~\ref{t:cap} is the case when $X$ can be equipped with the structure of a smooth orientable manifold. 

\begin{theorem}
\label{t:Poincare}
Let $f\:K\to \R^n$ and $X,A$ be as above. Assume that $X$ can be equipped with a smooth orientable manifold structure, $A=\partial X$, $B=\emptyset$ and $n+1\leq m\leq 2n-3$ for $m=\dim X$. Then
$$
U_{m-n}(f,r)=\phi_f\frown H_m(X,\partial X).
$$
\end{theorem}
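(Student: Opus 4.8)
The inclusion $\phi_f\frown H_m(X,\partial X)\subseteq U_{m-n}(f,r)$ is the case $k=m$, $B=\emptyset$ of Theorem~\ref{t:cap} (note $m\ge n+1>n$), so all the content lies in the reverse inclusion, and for that it suffices to produce a \emph{single} $Z\in Z_r(f)$ with $\mathrm{Im}\big(H_{m-n}(Z)\to H_{m-n}(X)\big)\subseteq\phi_f\frown H_m(X,\partial X)$. First I would reinterpret the target group via Poincar\'e--Lefschetz duality: since $X$ is a compact oriented $m$-manifold with $\partial X=A$, capping with the fundamental class of each component $(X_i,\partial X_i)$ identifies $\phi_f\frown H_m(X,\partial X)$ with the subgroup $\bigoplus_i\Z\cdot\mathrm{PD}(\phi_f|_{X_i})$ of $H_{m-n}(X)$ generated by the Lefschetz duals of $\phi_f$ over the components. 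On the other hand, for any smooth strict $r$-perturbation $g$ transverse to $0$ the zero set $Z=g^{-1}(0)$ is a closed oriented $(m-n)$-submanifold of $\mathrm{int}\,X$ with trivial normal bundle (the pullback of $T_0\R^n$), and since $g|_A$ is homotopic to $f|_A$ in $\R^n\setminus\{0\}$ we have $\phi_g=\phi_f$ and hence $[Z\cap X_i]=\mathrm{PD}(\phi_f|_{X_i})$ in $H_{m-n}(X_i)$ for each $i$. Therefore it is enough to find such a $g$ whose zero set meets every component of $X$ in a \emph{connected} set: then $H_{m-n}(Z)$ is free on those components, each mapping onto $\Z\cdot\mathrm{PD}(\phi_f|_{X_i})$, so the image is precisely the target group and we are done.

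To construct such a $g$ I would take an arbitrary smooth extension of $f|_A$ over $X$, make it transverse to $0$ keeping it fixed near $A$, and convert it by Lemma~\ref{l:perturb-ext} into a strict $r$-perturbation $g_0$ with zero set $Z_0$. Whenever $Z_0$ meets some $X_i$ in two distinct components I would join them by an embedded arc $\gamma\subseteq\mathrm{int}\,X_i$ meeting $Z_0$ only at its endpoints (possible in general position, as $\mathrm{codim}\,Z_0=n\ge2$) and perform ambient $0$-surgery along $\gamma$. The trace of this surgery is a framed submanifold of $\mathrm{int}\,X\times[0,1]$: the surgery handle is contractible, so the orientability of $X$ and $Z_0$ lets the normal framing extend across it, and the surgered manifold $Z_1$ is framed-cobordant to $Z_0$ in $\mathrm{int}\,X\times[0,1]$. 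Since $\R^n$ is convex, all extensions of $f|_A$ over $X$ are homotopic rel $A$; using this one promotes the cobordism to a map $X\times[0,1]\to\R^n$ equal to $g_0$ at one end and to $f|_A$ on $A\times[0,1]$, vanishing exactly along the cobordism. Its restriction to $X\times\{1\}$ is an extension of $f|_A$ with zero set $Z_1$, which Lemma~\ref{l:perturb-ext} turns back into a strict $r$-perturbation with that zero set. Iterating until each $X_i$ carries one component produces the required $g$.

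I expect the main obstacle to be exactly this last promotion step, i.e.\ showing that the surgered submanifold is still the zero set of an $r$-perturbation. It cannot be achieved by a small local modification of $g_0$: along the connecting arc $g_0$ is bounded away from $0$ while on a neighbourhood of it $g_0$ may be nearly as large as $2r$, so making it vanish there would violate $\|g-f\|\le r$; one must instead argue globally, controlling the extension of an $S^{n-1}$-valued map over the complement of a tubular neighbourhood of the cobordism by obstruction theory. The primary obstruction vanishes because the prescribed homology class is $\mathrm{PD}(\phi_f)$ (this is where the first obstruction $\phi_f$ enters), and the hypothesis $n+1\le m\le 2n-3=2(n-1)-1$ — which also forces $n\ge4$ and $1\le m-n\le n-3$ — is precisely the metastable range in which the higher obstructions can be dealt with. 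Granting this, the perturbation $g$ built above has $\mathrm{Im}(H_{m-n}(Z)\to H_{m-n}(X))=\bigoplus_i\Z\cdot\mathrm{PD}(\phi_f|_{X_i})=\phi_f\frown H_m(X,\partial X)$, which together with Theorem~\ref{t:cap} yields the claimed equality.
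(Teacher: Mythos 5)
Your high-level plan is the one the paper follows: one inclusion from Theorem~\ref{t:cap}, reduction to components, the fact that for a smooth transverse strict $r$-perturbation $g$ the image of the fundamental class of $g^{-1}(0)$ equals $\phi_f\frown[X]$ (this is the paper's Lemma~\ref{l:Poincare}, proved there by induction on $n$ via the splitting $\phi_f=\phi_1\smile\phi_2$ and naturality of $\frown$), and then a framed $0$-surgery along arcs joining components of the zero set to make it connected. The paper's Lemma~\ref{l:connected_sum} is exactly your surgery step, carried out inside the ambient $X$ with explicit tubular neighbourhoods of the connecting arcs and explicit corner-smoothing.

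The gap is the step you yourself flag as the ``main obstacle'': converting the framed cobordism $W$ from $f^{-1}(0)$ to the connected $S$ into an actual strict $r$-perturbation with zero set $S$. You propose to do this by obstruction theory on the complement of a tubular neighbourhood of the cobordism, invoking the metastable range $m\leq 2n-3$ to control the higher obstructions, but you do not carry this out, and that is not how the paper proceeds. The paper's Lemma~\ref{l:framing} is a direct geometric construction: take a small perturbation $\tilde f$ of $f$ transverse to the ray $\R_0^-e_1$, observe $\tilde f^{-1}(\R_0^-e_1)$ is a framed $(m-n+1)$-submanifold with boundary $f^{-1}(0)$, and use the hypothesis $m\leq 2n-3$ \emph{not} for metastable obstruction theory but for the elementary general-position count $2(m-n+1)<m$, which lets one arrange $W\cap\tilde f^{-1}(\R_0^-e_1)=f^{-1}(0)$. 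After smoothing their union to a manifold $W'$, the map $h$ is built from tubular-neighbourhood coordinates around $W''$ (an external collar of $W'$) — the first coordinate from the collar direction, the remaining $n-1$ from the framing — extended over the rest of $X$ into the contractible set $\R^n\setminus\R_0^-e_1$, and finally $h|_A\simeq f|_A$ is certified by the Pontryagin–Thom framing criterion (Milnor's Lemma 4), not by computing obstruction classes. So your proposal identifies the right lemma to prove but leaves it as a sketch whose intended mechanism (metastable obstruction theory) departs from the paper's mechanism (transversality, general position with $2(m-n+1)<m$, and Pontryagin–Thom), and it has not been shown that the obstruction-theoretic route closes.
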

When $m=n$, the well group $U_0(f,r)$ can be strictly larger than $\phi_f\frown H_n(X,\partial X)$ but it can be computed.

We believe that the same claim holds when $X$ is an orientable PL manifold. It remains open whether the last equation holds also for $m> 2n-3$.
Throughout the proof of Theorem~\ref{t:Poincare}, we will show that if $g: K\to\R^n$ is a~smooth $r$-perturbation of $f$ transverse to $0$, then
the fundamental class of $g^{-1}(0)$ is mapped  to the Poincar\'e dual of the first obstruction. This also holds if $B\neq\emptyset$
and in all dimensions.

\subsection{Well groups $U_*(f,r)$ are incomplete as an invariant of $Z_r(f)$.} 
A simple example illustrating Theorem~\ref{t:Poincare} is the map $f\:S^2\times B^3\to\R^3$ defined by $f(x,y):=y$ with $B^3$ considered as the unit ball in $\R^3$. It is easy to show that 
\begin{equation}\label{e:prop}
\text{for every }1\text{-perturbation }g \text{ of }f\text{ and every }x\in S^2\text{ there is a root of }g\text{ in }\{x\}\times B^3.\end{equation} 
This robust property is nicely captured by (and can be also derived from) the fact $U_2(f,1) \cong \Z$.

The main question of Section~\ref{s:incompleteness} is what happens, when the first obstruction $\phi_f$ is trivial---and thus $f|_A$ can be extended to $X^{(n)}$---but the map $f|_A$ cannot be extended to  whole of $X$. The zero set of $f$ can still have various robust properties such as \eqref{e:prop}. It is the case of $f\:S^2\times B^4\to \R^3$ defined by $f(x,y):=|y|\Hopf(y/|y|)$ where $\Hopf\:S^3\to S^2$ is a homotopically nontrivial map such as the Hopf map. The zero set of each $r$-perturbation $g$ of $f$ intersects each section $\{x\}\times B_4$, but unlike in the example before, well groups do not capture this property.
All well groups $U_j(f,r)$ are trivial for $j>0$ and,\footnote{Namely $U_2(f,r)\cong 0$ as is shown by the $r$-perturbation $g(x,y)=f(x,y)-rx$ with the zero set homeomorphic to the $3$-sphere.} consequently, they
cannot distinguish $f$ from another $f'$ having only a single robust root in $X$.
We will describe the construction of such $f'$ for a wider range examples.

 In the following, $B^i_q$ will denote the $i$-dimensional ball of radius $q$, that is, $B^i_q=\{y\in\R^i\:|y|\le q\}$.
We also emphasize that this and the following theorem hold for arbitrary coefficient group of the homology theory $H_*$.
\begin{theorem}\label{t:torus}
Let $i,m, n\in\N$ be such that $m-i<n<i< (m+n+1) /2$ and both $\pi_{i-1} (S^{n-1})$ and $\pi_{m-1}(S^{n-1})$ are nontrivial. Then on $K=S^{m-i}\times B^i_1$ we can define two maps $f,f'\:K\to \R^n$ such that for all $r\in(0,1]$
\begin{itemize}
\item $f$, $f'$ induce the same $X= S^{m-i}\times B^i_r$ and $A=\partial X$ and have the same well groups for any coefficient group of the homology theory $H_*$ defining the well groups, 
\item but $Z_r(f)\neq Z_r(f')$.
\end{itemize}
In particular, the property
$$ \text{for each } Z\in Z_r(.)\text{ and } x\in S^{m-i}\ \text{ there exists } y\in
B^i_r\text{ such that } (x,y)\in Z$$
is satisfied for $f$ but not for $f'$. Namely, $Z_\epsilon(f')$ contains a singleton for each $\epsilon>0$.
\end{theorem}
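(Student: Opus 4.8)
The plan is to construct $f$ and $f'$ with the same underlying space $X=S^{m-i}\times B^i_r$ but with perturbation classes that differ in exactly one robust root. The map $f$ should be the ``product-type'' map that detects all sections: write a point of $K=S^{m-i}\times B^i_1$ as $(x,y)$ and set $f(x,y):=|y|\,\alpha(y/|y|)$, where $\alpha\:S^{i-1}\to S^{n-1}$ is a map representing a nontrivial element of $\pi_{i-1}(S^{n-1})$ (this is where the hypothesis that $\pi_{i-1}(S^{n-1})$ is nontrivial enters). With this choice $|f|^{-1}[0,r]=S^{m-i}\times B^i_r=:X$ and $|f|^{-1}(r)=S^{m-i}\times S^{i-1}_r=\partial X=:A$, and $f|_A$ is essentially $\mathrm{pr}_2$ followed by $\alpha$, so it does not extend over any section $\{x\}\times B^i_r$; this gives the displayed robust-section property for $f$ via an elementary degree/obstruction argument (or directly from Observation~\ref{o:triv} applied fiberwise).

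Next I would build $f'$ on the same $X$, $A$ with $f'|_A \simeq f|_A$ (so that $X,A$ and — crucially — all well groups agree), but arranged so that $f'|_A$ \emph{can} be extended over $X$ minus a single small ball, leaving one isolated robust root. The idea: since $m-i<n$, the obstruction to extending $f|_A$ over the $n$-skeleton of $X$ already vanishes (the obstruction cocycle lives in $H^n(X,A)$ and the relevant classes can be killed by modifying $f$ over cells of dimension between $m-i+1$ and, roughly, $n$, using that the source skeleton is low-dimensional); the remaining obstruction to extending over all of $X$ is a single element of $H^m(X,A;\pi_{m-1}(S^{n-1}))\cong \pi_{m-1}(S^{n-1})$, nontrivial by hypothesis. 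By the usual obstruction-theory argument this class can be concentrated in a single top cell, i.e.\ there is an extension $e'\:X\setminus \mathrm{int}(D)\to S^{n-1}$ for one small $m$-ball $D$, and $e'$ does not extend over $D$. Then take $f'$ to be the corresponding map of pairs $(X,A)\to(B^n,S^{n-1})$ (extended arbitrarily over $D$), rescaled so that $|f'|^{-1}[0,r]=X$; by Lemma~\ref{l:perturb-ext} the $r$-perturbations of $f'$ correspond to extensions of $f'|_A$, and the minimal such zero set is a single point — hence $Z_\epsilon(f')$ contains a singleton for every $\epsilon>0$, so the robust-section property fails and $Z_r(f)\neq Z_r(f')$.

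It remains to check that $f$ and $f'$ have the same well groups. Since $f|_A\simeq f'|_A$ (both realize, up to homotopy, the same map determined by $\alpha$ on fibers together with trivial higher obstructions), and the well groups $U_j(f,r)$ depend — through Lemma~\ref{l:perturb-ext} and the definition — only on the pair $(X,A)$ together with the homotopy class of the restriction to $A$ and the \emph{family} of attainable zero sets up to the homology of $(X,B)=(X,\emptyset)$, I would argue that the inclusion-induced images $\mathrm{Im}(H_j(Z)\to H_j(X))$ have the same intersection in both cases. Concretely: for $f$, every $Z\in Z_r(f)$ is homologically ``spread across all sections'' and one computes $\bigcap_Z \mathrm{Im}\,H_j(Z)\to H_j(X)$ directly; for $f'$, the extra freedom (one section can be missed) only \emph{enlarges} the family $Z_r(f')$, so the intersection can only shrink — but one shows it does not, because the single-point zero set of $f'$ still has $H_0\to H_0(X)$ surjective and $H_j=0$ for $j>0$ matches the fact that $U_j(f,r)=0$ for $j>0$ anyway (the hypothesis $n<i$ and $m-i<n$ force $X$'s low homology to be carried already at the level of $U_*$). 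So both sides equal $H_0(X)$ in degree $0$ and $0$ in positive degrees.

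The main obstacle I expect is the middle step: producing $f'$ with $f'|_A$ genuinely homotopic to $f|_A$ while the global obstruction over $X$ is nonzero and concentrated in one cell. This is a delicate piece of obstruction theory — one must verify that modifying $f$ over the intermediate skeleta to kill the lower obstructions does not change the homotopy class on $A$, which uses the dimension inequalities $m-i<n<i<(m+n+1)/2$ in an essential way (the upper bound $i<(m+n+1)/2$, i.e.\ $m<2i-n$ roughly, is what keeps the relevant obstruction groups in a stable range so that the ``difference'' construction behaves additively and the single-cell concentration is possible). I would isolate this as a lemma about extending $f|_A$ over $X$ with a prescribed nontrivial top obstruction, and spend most of the write-up there; the rest (computing $Z_r(f)$, invoking Lemma~\ref{l:perturb-ext}, and comparing well groups) is then comparatively routine.
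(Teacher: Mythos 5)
Your construction of $f$ matches the paper exactly, but the middle step of your plan — building $f'$ with $f'|_A\simeq f|_A$ — is self-defeating, and this is a genuine error rather than a missing detail. By Proposition~\ref{p:characterization} (equivalently, by Lemma~\ref{l:perturb-ext}, whose content you invoke later in the same paragraph), the family $Z_r(f)$ is \emph{completely} determined by the pair $(X,A)$ together with the homotopy class of $f|_A$. So if $f'|_A\simeq f|_A$ then $Z_r(f)=Z_r(f')$, and you cannot have $Z_r(f)\neq Z_r(f')$. Concretely: if $f'|_A$ admits an extension over $X$ minus a small $m$-ball $D$, that extension composed with the homotopy $f|_A\simeq f'|_A$ (via the homotopy extension property) gives an extension of $f|_A$ over $X\setminus\mathrm{int}(D)$ as well, so $f$ would also have arbitrarily small perturbations with a single root, contradicting the robust-section property you established for $f$ in the first part.

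The paper's point — and the reason the theorem is nontrivial — is exactly the opposite of what you assume: the well groups are a \emph{strictly weaker} invariant than $[f|_A]$, so to exhibit incompleteness one must take $f|_A\not\simeq f'|_A$ (which then permits $Z_r(f)\neq Z_r(f')$) and nevertheless check by hand that the well groups coincide. The paper does this by choosing two explicitly different obstruction classes on $A=S^{m-i}\times S^{i-1}$: $\varphi(x,y)=\mu(y)$, nontrivial on every fiber $\{x\}\times S^{i-1}$, versus $\varphi'$ factoring through the quotient $S^{m-i}\times S^{i-1}\to S^{m-i}\wedge S^{i-1}\cong S^{m-1}\xrightarrow{\nu}S^{n-1}$, which is \emph{nullhomotopic on every fiber} but carries a nontrivial top-cell obstruction (Lemma~\ref{l:nonext}, proved via Whitehead products). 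You are implicitly treating the two as the same class because they ``agree on fibers up to trivial higher obstructions''; they do not — they differ by the class of $\varphi'$ itself, which lives on the $(m-1)$-cell of $A$ and is nontrivial. A second, smaller gap: your ``check that well groups agree'' step hand-waves the positive-degree triviality for $f$. The paper must produce a concrete perturbation $g(x,y)=f(x,y)-rx$, identify $g^{-1}(0)$ as the $\mu$-preimage of an equatorial $S^{m-i}\subseteq S^{n-1}$, and invoke the Freudenthal suspension theorem (this is where the constraint $i<(m+n+1)/2$ is used, not where you place it) to show that preimage is $S^{m-n}$, which has trivial $H_{m-i}$ because $m-n>m-i$. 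That computation is not ``comparatively routine'' — it is where the dimension hypotheses do their work.
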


\remove{In Section~\ref{s:incompleteness} we discuss that the maps $f$ and $f'$ are no peculiar examples but rather typical choices given that the underlying space $K$ is the solid torus $S^{m-i}\times B^i$ and that both $Z_r(\cdot)$ are nontrivial. 
Further we indicate that the same result holds for even more realistic choice of the underlying space $K=B^m$ and $B=\partial K$. For the sake of exposition, we chose the case where $f$ is large on the boundary of $K$ and we do not need to consider nonempty $B$.}

\heading{The lack of extendability not reflected by $U_{m-i}(f,r)$.} 
The key property of the example of Theorem~\ref{t:torus} is that the maps $f|_A$ and $f'|_A$ can be extended to the $(i-1)$-skeleton $X^{(i-1)}$ of $X$, for $i>n$. The difference between the maps lies in the extendability to $X^{(i)}$. Unlike in the case when $i=n$, the lack of extendability is not reflected by the well groups. The crucial part is the triviality of the well groups in dimension $m-i$ and\footnote{This dimension is somewhat important as all higher well groups are trivial by Lemma~\ref{l:triv} and all lower homology groups of $X$ may be trivial as is the case in Theorem~\ref{t:torus}. On the other hand, $H_{m-i}\big(X,\pi_{i-1}(S^{n-1})\big)$ has to be nontrivial in the case when $X$ is a manifold for the reasons following from obstruction theory and Poincar\'e duality.} this triviality holds in  greater generality:

\begin{theorem}\label{t:triviality}
Let $f\:K\to\R^n$, $B\subseteq K$, $X:=|f|^{-1}[0,r]$ and $A:=|f|^{-1}\{r\}$. Assume that the pair  $(X,A)$ can be finitely triangulated.\footnote{ That is, there exist finite simplicial complexes
$A^\Delta\subseteq X^\Delta$ and a homeomorphism
$(X^\Delta, A^\Delta
)\to (X,A).$} 
Further assume that $f|_A$ can be extended to a map $h: A\cup X^{(i-1)}\to S^{n-1}$ for some $i$ such that $m-i<n<i< (m+n)/2$ for $m:=\dim X$. Then $U_{m-i}(f,r)=0$ for any coefficient group of the homology theory $H_*$.
\end{theorem}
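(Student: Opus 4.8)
The plan is to show that every homology class in $H_{m-i}(X,B)$ fails to lie in the image of $H_{m-i}(Z,B)$ for \emph{some} $r$-perturbation $g$ with $Z=g^{-1}(0)$. By Lemma~\ref{l:perturb-ext}, constructing such a $g$ is the same as extending $f|_A$ to a map $e\colon X'\to S^{n-1}$ on as large a subcomplex $X'\supseteq A$ as possible, since then $Z=e^{-1}(0)$ can be taken disjoint from $X'$. The hypothesis already gives us an extension $h\colon A\cup X^{(i-1)}\to S^{n-1}$, i.e. we can cover the entire $(i-1)$-skeleton. First I would analyze the obstructions to pushing $h$ across the higher skeleta one cell at a time: the obstruction to extending over the $j$-skeleton lies in $H^{j+1}\big(X,A;\pi_j(S^{n-1})\big)$, which vanishes for $n\le j\le m-2$ only in lucky cases, so in general we cannot extend over all of $X$. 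Instead, the key observation is a dimension count: we do not need to cover all of $X$, only enough of it that the \emph{complementary} cells — those carrying a zero set — sit in dimensions that cannot support a nonzero class of $H_{m-i}(X,B)$ after intersecting.

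Concretely, here is the mechanism I would use. Fix a class $\alpha\in H_{m-i}(X,B)$; I want a perturbation $g$ whose zero set $Z$ misses $\alpha$ in the sense that $\alpha\notin\Im\big(H_{m-i}(Z,B)\to H_{m-i}(X,B)\big)$. Represent $\alpha$ by a cycle supported on the $(m-i)$-skeleton. Since $m-i<n\le i-1$, we have $m-i\le i-1$, so $\alpha$ already lives in a skeleton \emph{over which $h$ is defined}. The idea is then to deform $h$ relative to nothing and use general position: the zero set of the perturbation built from a generic extension of $h$ over as much of $X$ as obstruction theory allows will be a subcomplex (in a fine enough subdivision) of dimension $\le m-n$ whose support can be taken disjoint from any fixed $(m-i)$-cycle provided $(m-i)+(m-n) < m$, i.e. $m<n+i$, which is exactly the hypothesis $i>m-n$ (equivalently $m-i<n$). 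Thus $Z$ and the carrier of $\alpha$ have complementary dimensions summing to less than $m=\dim X$ and — after a generic perturbation within the simplicial structure — can be made disjoint, forcing $\alpha$ to map to $0$ in $H_{m-i}(X,B)/\text{(classes supported away from }Z)$; more precisely, $\alpha$ then lies outside $\Im(H_{m-i}(Z,B))$ unless $\alpha=0$ in $H_{m-i}(X\setminus Z,B)$, and one uses the long exact sequence of $(X,X\setminus Z)$ together with the dimension bound to conclude. Taking the intersection over all such $g$ (one for each $\alpha$, or better, a single generic $g$) yields $U_{m-i}(f,r)=0$.

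The step I expect to be the main obstacle is making the ``general position / disjointness'' argument rigorous in the \v Cech (or simplicial) setting while simultaneously controlling the obstruction-theoretic extension. The subtlety is that obstruction theory lets us extend $h$ over the $n$-skeleton freely (the first obstruction is the only one below dimension $n$, and $h$ already kills it since it is defined there), but extending further may be genuinely impossible — yet the zero set of the \emph{partial} map, thickened appropriately, need not be confined to a low-dimensional skeleton unless we are careful about how the map behaves near the frontier of the subcomplex over which it is defined. I would handle this by working with a map $e\colon X\to\R^n$ rather than into $S^{n-1}$, taking $e$ to agree with $f|_A$ near $A$, to be the (rescaled) composite of a retraction onto $A\cup X^{(i-1)}$ with $h$ wherever that makes sense, and to be a generic simplexwise-linear map on the remaining cells; then $e^{-1}(0)$ is automatically supported in the cells of dimension $\ge n$ not reached by $h$, and transversality/general position for simplexwise linear maps gives $\dim e^{-1}(0)\le m-n$ with the zero set a subcomplex of a subdivision that can be pushed off any fixed $(m-i)$-cycle. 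The inequality $i<(m+n)/2$, i.e. $2i<m+n$, i.e. $i<m+n-i$, is what guarantees $i-1\ge n$ has enough room and, more importantly, that the ``dual'' dimension $m-i$ stays strictly below $n$ so that the complementary-dimension disjointness $(m-i)+(m-n)<m$ has slack; I would make sure this strict inequality is used exactly where the generic perturbation needs one extra dimension of freedom. Finally, I would note that the argument is insensitive to the coefficient group since it is carried out entirely at the level of chains and supports, with the coefficient group entering only formally.
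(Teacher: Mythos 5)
Your proposal shares the right high-level goal with the paper — construct an $r$-perturbation $g$ whose zero set $Z$ cannot support a given class in $H_{m-i}(X,B)$ — but the mechanism you propose has a genuine gap. You want to make $Z=g^{-1}(0)$ disjoint from a fixed $(m-i)$-cycle $\zeta$ representing $\alpha$ and then conclude $\alpha\notin\Im\big(H_{m-i}(Z,B)\to H_{m-i}(X,B)\big)$. But disjointness of $Z$ from one chosen representative of $\alpha$ does not preclude $\alpha$ being represented by some other cycle carried by $Z$. (Compare: two parallel longitude circles on a torus are disjoint but represent the same $H_1$-class.) Your appeal to the long exact sequence of $(X,X\setminus Z)$ does not repair this, since being in the image of $H_{m-i}(X\setminus Z,B)\to H_{m-i}(X,B)$ and being in the image of $H_{m-i}(Z,B)\to H_{m-i}(X,B)$ are not mutually exclusive conditions, and no argument is given why they should be in this situation.

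The paper's proof sidesteps this entirely by making $H_{m-i}(Z,B)$ itself vanish, which of course kills the image for free. Concretely, using the dual complex $Y$ to $A\cup X^{(i-1)}$ in $X$ (Lemma~\ref{l:dual}), one joins $h$ on $A\cup X^{(i-1)}$ with the zero section over $Y^{(m-i-1)}$ and then handles each dual simplex pair $\sigma*\tau$ separately. The key step — and this is where $i<(m+n)/2$ enters, not merely as ``slack'' — is that by the Freudenthal suspension theorem (applicable because $i\le(m+n)/2-1$) the restriction $h|_{\partial\sigma}$ is homotopic to an iterated suspension, so its preimage of the relevant equatorial subsphere is again a sphere, and $(g|_\Delta)^{-1}(0)$ becomes a cone, i.e.\ an $(m-n)$-cell attached to $\partial\tau\cong S^{m-i-1}$. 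The resulting zero set is a complex built from an $(m-i-1)$-dimensional skeleton by attaching cells of dimension exactly $m-n$; since $m-i-1<m-i<m-n$ (using $n<i$), there are no cells of dimension $m-i$, hence $H_{m-i}(Z,B)=0$ over any coefficient group. Your dimension count $\dim Z\le m-n$ is consistent with this, but by itself it cannot give $H_{m-i}(Z,B)=0$ since $m-n>m-i$; the precise cell structure (no $(m-i)$-cells at all) is what is needed, and that is exactly what the dual-complex construction together with Freudenthal provides. So the missing ingredient in your write-up is a way to control not only the dimension of $Z$ but the absence of $(m-i)$-cells in it, and the Freudenthal suspension argument is the tool that does this.
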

The proof is all delegated
to Appendix~\ref{a:triv} as its core idea is already contained in the proof of Theorem~\ref{t:torus}. There we also comment on the possibility of finding pairs of maps $f$ and $f'$ with the same well groups but different robust properties of their zero sets in this more general situation.

\remove{One could ask the question of triviality in dimensions smaller than $m-i$ as well. Our favorite problem is the following
one.\begin{problem} Let $f$ be as in Theorem~\ref{t:triviality} and let $i=n+1$, that is, the first obstruction is trivial. Is it true that all well groups $U_{j}(f,r)$ for $j\ge(m-n+2)/2$ are trivial?
\end{problem}
The bound $j\ge (m-n+2)/2$ is not known to be necessary (we only know that the statement is not true for $j=1$). But passing the bound seems to bring various technical difficulties such as inapplicability of the Freudenthal suspension theorem.}

\heading{Our subjective judgment on well groups of~$\R^n$-valued maps.}
We find the problem of the computability of well groups interesting and challenging with connections to homotopy theory (see also Proposition~\ref{p:characterization} below). 
Moreover, we acknowledge that well groups may be accessible for non-topologists: they are based on the language of homology theory that is relatively intuitive and easy to understand. 
On the other hand, well groups may not have sufficient descriptive power for various situations (Theorems~\ref{t:torus} and~\ref{t:triviality}).
Furthermore, despite all the effort, the computability of well groups seems far from being solved.
In the following paragraphs, we propose an~alternative based on homotopy and obstruction theory that addresses these drawbacks.

\subsection{Related work}
\heading{A replacement of well groups of $\R^n$-valued maps.} In a companion paper~\cite{cobordism}, we find a complete invariant for an enriched version of $Z_r(f)$. The starting point is the surprising claim that $Z_r(f)$---an object of a geometric nature---is determined by terms of homotopy theory.
 
\begin{proposition}[{\cite
{cobordism}}]\label{p:characterization}
Let $f\:K\to \R^n$ be a continuous map on a compact Hausdorff domain, $r>0$, and let  us denote the
space $|f|^{-1}[r,\infty]$ by $A_r$. Then the set
$Z_r(f):=\{g^{-1}(0)\: \|g-f\|\le r\}$ is  determined by the
pair $(K,A_r)$ and the
homotopy class of $f|_{A_r}$ in $[A_r,\{x\in \R^n\:\|x\|\ge r\}]\cong[A_r,S^{n-1}]$.\footnote{Here $[A_r,S^{n-1}]$
denotes the set of all homotopy classes of maps from $A_r$ to $S^{n-1}$, that is, the cohomotopy group $\pi^{n-1}(A_r)$ when $\dim A_r\le 2n-4$.}
\end{proposition}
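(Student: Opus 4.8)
\emph{Proof plan.} The strategy is to trade the geometric description of $Z_r(f)$ for an equivalent homotopy-theoretic one in which the dependence on $f$ is visibly only through the pair $(K,A_r)$ and the class $[f|_{A_r}]$. Write $A:=|f|^{-1}(r)$ and $X:=|f|^{-1}[0,r]$; both are recovered from $(K,A_r)$, since $A_r=|f|^{-1}[r,\infty]$ has topological boundary $A$ and $X=\overline{K\setminus A_r}$. Let $\rho\:\{x\in\R^n\:|x|\ge r\}\to S^{n-1}_r:=\{x\:|x|=r\}$ be the radial deformation retraction and put $\hat f:=\rho\circ f|_{A_r}\:A_r\to S^{n-1}_r$, whose homotopy class is exactly the invariant named in the statement (the isomorphism $[A_r,\{|x|\ge r\}]\cong[A_r,S^{n-1}]$ is induced by $\rho$). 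The heart of the argument would be the following characterization: \emph{for a closed $G_\delta$ set $C\subseteq K$ with $C\subseteq\{|f|<r\}$ one has $C\in Z_r(f)$ if and only if $\hat f$ extends to a continuous map $K\setminus C\to S^{n-1}_r$.} Granting this, the conclusion is immediate for the part of $Z_r(f)$ consisting of zero sets disjoint from $A$: ``closed $G_\delta$ subset of $K\setminus A_r$'' and ``admits such an extension'' are, by homotopy invariance of the extension problem, functions of $(K,A_r)$ and $[\hat f]$ alone, so two maps $f,f'$ sharing these data determine the same collection of such $C$.

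For the forward implication I would take $g$ with $\|g-f\|\le r$ and $g^{-1}(0)=C$, note that $\rho\circ g$ is then defined on all of $K\setminus C$, and show that the straight-line homotopy $t\mapsto(1-t)f+tg$ avoids $0$ on $A_r$: on $\{|f|>r\}$ because $|(1-t)f+tg|\ge|f|-tr>0$, and on $A=\{|f|=r\}$ because the ball $\overline B(f(x),r)$, having $0$ on its boundary, has $\overline B(f(x),r)\setminus\{0\}$ star-shaped about its centre $f(x)$. Hence $\rho\circ g|_{A_r}\simeq\hat f$, and since $A_r\hookrightarrow K\setminus C$ is a cofibration (a regular neighbourhood of the compact set $A_r$ misses the closed set $C$), the homotopy extension property upgrades the extension $\rho\circ g$ of $\rho\circ g|_{A_r}$ to an extension of $\hat f$ itself. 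For the reverse implication, given an extension $\psi\:K\setminus C\to S^{n-1}_r$ — which by the same cofibration I may assume restricts on $A_r$ to $\hat f$ on the nose, i.e.\ $\psi=rf/|f|$ there — I would build the perturbation by hand: on a closed neighbourhood of the compact set $C$, where $|f|$ stays below some $r-\delta$ so that $\overline B(f(x),r)$ contains a fixed neighbourhood of $0$, set $g$ equal to $\psi$ scaled radially by a function vanishing exactly on $C$ and equal to a small positive constant near the neighbourhood's boundary; set $g=f$ on $A_r$; and on the intermediate region, where $|f|\le r$ and $\psi$ is defined, set $g$ equal to $\psi$ scaled by a continuous positive function that takes the two prescribed boundary values and lies inside the (nonempty, continuously varying) interval of radii $\lambda$ with $\lambda\psi(x)/r\in\overline B(f(x),r)$. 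The three formulas agree on overlaps, $g$ is continuous, $\|g-f\|\le r$, and $g^{-1}(0)=C$.

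The step I expect to require genuine care is the zero sets that touch $A=|f|^{-1}(r)$: for these $\rho\circ g$ is no longer defined on all of $A_r$, so the statement above must be read relative to $A$ — the condition becoming that $\hat f$, restricted to the complement in $A_r$ of the (necessarily boundary) part $C\cap A$, extends over $K\setminus C$. The forward direction survives verbatim — the star-shapedness of $\overline B(f(x),r)\setminus\{0\}$ is precisely the borderline case $|f(x)|=r$ — and the reverse construction is the same, now allowing the scaling function to vanish at points of $A$ as well; one then checks that this relative condition again depends only on $(K,A_r)$, on $C$, and on $[\hat f]$. (Alternatively, one shows that every $C\in Z_r(f)$ disjoint from $A$ is realized by a \emph{strict} perturbation, so that $Z_r(f)$ is the disjoint union of $\bigcup_{r'<r}Z_{r'}(f)$ with the family of zero sets meeting $A$, and handles the two pieces separately.) The only non-elementary ingredient anywhere is the homotopy extension property, used twice; it is automatic when $K$ is a finite simplicial complex — the case relevant to all the computational statements — and holds in the generality needed for a compact Hausdorff domain, via the closed cofibred inclusion $A_r\hookrightarrow K\setminus C$, as carried out in~\cite{cobordism}.
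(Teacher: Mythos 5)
Your approach is genuinely different from the paper's. The paper factors the argument through the mapping cylinder $M := X\times\{0\}\cup A\times[0,1]$ of $A\hookrightarrow X$ and proves (Lemma~\ref{l:nonstrict}) that $Z_r(f)$ equals the family of zero sets of ``homotopy perturbations'' $h\:M\to\R^n$ with $h|_{A\times\{1\}}=f|_A$ and $h|_{A\times(0,1]}$ nowhere zero; this formulation is manifestly a function of $(X,A)$ and $[f|_A]$, and all the topology stays on the compact space $M$. You instead characterize $Z_r(f)$ by an extension problem on $K\setminus C$. For the strict part ($Z^<_r(f)$, equivalently $C\cap A=\emptyset$) the two routes essentially coincide and your argument is the paper's Lemma~\ref{l:perturb-ext} in slightly different clothing; the forward direction and the star-shapedness of $\overline B(f(x),r)\setminus\{0\}$ are correct, and the scaling construction for the converse works because a compact neighbourhood of $C$ sits inside $\{|f|<r\}$.

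The genuine gap is exactly where you flag it: zero sets meeting $A$. Two specific problems. First, your repeated appeal to the homotopy extension property uses the cofibration $A_r\hookrightarrow K\setminus C$ justified by ``a regular neighbourhood of the compact set $A_r$ misses the closed set $C$.'' When $C\cap A\neq\emptyset$ this fails: the relevant pair is $(K\setminus C,\,A_r\setminus(C\cap A))$, both spaces are non-compact, $C$ meets every neighbourhood of $A_r$, and the paper's HEP source (\cite[Prop.~9.3]{HuBook}) applies only to closed pairs in a \emph{compact} Hausdorff space. So ``by the same cofibration I may assume $\psi$ restricts to $\hat f$ on the nose'' is not available. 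Second, the claim that ``the reverse construction is the same, now allowing the scaling function to vanish at points of $A$'' hides the entire content of the paper's proof of Lemma~\ref{l:nonstrict}. Once the zero set reaches $A$ there is no uniform $\delta>0$ with $|f|\le r-\delta$ near $C$, the scaling factor has no positive lower bound away from $C$, and one must build the non-strict perturbation as a limit. The paper does this by exhausting $X\setminus h^{-1}(0)$ by the compact sets $X\setminus O_j$ with $O_j=|h'|^{-1}[0,1/j)$, applying HEP on each compact pair $(X\setminus O_{j+1},\,(X\setminus O_{j-1})\cup(A\setminus O_j)\cup\partial O_{j+1})$ to produce homotopies $G_j$ and maps $g_j$, choosing scaling functions $\beta_j$ that are eventually stationary on each compact piece and are forced to $0$ near $C$ at a controlled rate, and showing the limit $g=\lim_j\beta_j g_j$ is continuous with $g^{-1}(0)=h^{-1}(0)$ and $\|g-f\|\le r$. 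Nothing in your sketch replaces this inductive construction, and it is precisely what makes the ``$\le r$'' case harder than the ``$<r$'' case. Your alternative decomposition $Z_r(f)=Z^<_r(f)\sqcup\{C:\,C\cap A\neq\emptyset\}$ is correct but does not help, since the hard piece is still the second one. To complete the argument you would either have to reproduce the paper's limit construction on the non-compact pair, or --- more in the paper's spirit --- pass to the mapping cylinder where compactness is restored and the homotopy-invariance of the answer becomes transparent.
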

Since \cite{cobordism} has not been published yet, 
we append the complete proof of Proposition~\ref{p:characterization} in Appendix~\ref{a:characterization}.

Note that since the well groups is a property of $Z_r(f)$, they are determined
by the pair $(K,A_r)$ and the homotopy class $[f|_{A_r}]$. Thus the homotopy class has a greater descriptive power and the examples from the previous section show that this inequality is strict. If $K$ is a simplicial complex, $f$ is simplexwise linear and $\dim\ A_r\le 2n-4$ then $[A_r,S^{n-1}]$ has a natural structure of an Abelian group  denoted by $\pi^{n-1}(A_r).$ The restriction $\dim A_r\le 2n-4$ does not  apply when $n=1,2$ and\footnote{Note
that for $n=1$ the structure of the set $[A,S^{n-1}]$ is very
simple and for $n=2$ we have $[A,S^{n-1}]\cong H^1(A;\Z)$ no
matter what the dimension of $A_r$ is.} otherwise 
we could replace $[A_r,S^{n-1}]$ with $[A_r^{(2n-4)}, S^{n-1}]$ which contains less information but is computable.
The isomorphism type of $\pi^{n-1}(A_r)$ together with the distinguished element
$[f|_{A_r}]$ can be computed essentially by~\cite[Thm 1.1]{post}. Moreover, the inclusions $A_s\subseteq A_r$ for $s\ge r$ induce computable homomorphisms between the corresponding pointed Abelian groups. Thus for a given $f$ we obtain a sequence of pointed Abelian groups $\pi^{n-1}(A_r), {r>0}$ and it can be easily shown that the interleaving distance of the sequences $\pi^{n-1}\big(A_*(f)\big)$ and $\pi^{n-1}\big(A_*(g)\big)$ is bounded by $\|g-f\|$. Thus after tensoring the groups by an arbitrary field, we get persistence diagrams (with a distinguished bar) that will be stable with respect to  the bottleneck distance and the $L_\infty$ norm. The construction will be detailed in~\cite{cobordism}.

The computation of the cohomotopy group $\pi^{n-1}(A)$ is naturally segmented into a hierarchy of approximations of growing computational complexity. Therefore our proposal allows for compromise between the running time and the descriptive power
of the outcome.
The first level of this hierarchy is the primary obstruction $\phi_f$. One could form similar modules of cohomology groups with a distinguished element as we did with the cohomotopy groups above. However, in this paper we passed to homology via cap product in order to relate it to the established well groups. In the ``generic'' case when $X$ is a manifold no information is lost as from the Poincar\'e dual $\phi_f \frown [X]$ we can reconstruct the primary obstruction $\phi_f$ back.

\heading{The cap-image groups.} 
The groups $\phi_f\frown H_k(X,A)$ (with $B=\emptyset$) has been
studied by Amit K. Patel under the name \emph{cap-image groups.} In fact, his setting is slightly more complex with $\R^n$
replaced by arbitrary manifold $Y$. Instead of the
zero sets, he considers preimages of all points of $Y$ simultaneously
in some sense. Although his ideas have not been published yet,
they influenced our research; the application of the cap product
in the context of well groups should be attributed to Patel.\footnote{We
originally proved that when $K$ is a triangulated orientable
manifold, the Poincar\'e dual of $\phi_f$ is contained in $U_{m-n}(f,r)$.
Expanding the proof was not difficult, but the preceding inspiration
of replacing the Poincar\'e duality by cap product came from
Patel. The cap product provides a nice generalization to an arbitrary
simplicial complex $K$.}

The advantage of the primary obstructions and the cap image groups is their computability and well understood mathematical structure. 
However, the incompleteness results of this paper apply to these invariants as well.

\heading{Verification of zeros.} An important topic in the interval
computation community is the verification of the (non)existence
of zeros of a given function~\cite{Neumaier:90}. While the nonexistence
can be often verified by interval arithmetic alone,
a proof of existence requires additional methods which often
include topological considerations. In the case of continuous
maps $f: B^n\to\R^n$, Miranda's or Borsuk's theorem can be used for zero verification~\cite{Frommer:05,Alefeld:01a},
or the computation of the topological degree~\cite{Kearfott:02,Collins:08b,Franek_Ratschan:2012}.
Fulfilled assumptions of these tests not only yield a zero in
$B^n$ but also a ``robust'' zero and a nontrivial $0$th well
group $U_0(f,r)$ for some $r>0$. 
Recently, topological degree  has been used for simplification of vector fields~\cite{Skraba}.

The first obstruction $\phi_f$ is the analog of the degree for underdetermined systems, that is, when $\dim K>n$ in our setting. To the best of our knowledge, this tool has not been algorithmically utilized.

\section{Computing lower bounds on well groups}\label{s:cap}
\heading{Homology theory behind the well groups.} For computing the approximation $\phi_f\frown H_k(X,A\cup B)$ of well
group $U_{k-n}(f)$ we only have to work with simplicial complexes and simplicial maps for which all homology theories satisfying the Eilenberg--Steenrod axioms are naturally equivalent. Hence, regardless of the homology theory $H_*$ used, we can do the computations in simplicial homology. Therefore the standard algorithms of computational topology \cite{EdelsbrunnerHarer:ComputationalTopology-2010}
and the formula for the cap product of a simplicial cycle and cocycle~\cite[Section 2.2]{prasolov} will do the job.
 
The need for a carefully chosen homology theory stems from the courageous claim that the zero set $Z$ of \emph{arbitrary} continuous perturbation \emph{supports} $\phi_f\frown \beta$ for any $\beta\in H_*(X,A\cup B)$, i.e. some element of $H_*(Z,B)$ is mapped by the inclusion-induced map to $\phi_f\frown \beta$. Without more restrictions on the perturbations, the zero sets can be ``wild'' non-triangulable topological spaces that can fool singular homology and  render this claim false and---to the best of our knowledge---make well groups trivial. See an example after the proof of Theorem~\ref{t:cap}.

For the purpose of the work with the general zero sets, we will require that our homology theory satisfies the Eilenberg--Steenrod axioms
with a possible exception of the exactness axiom, and these additional properties:
\begin{enumerate}
\item \label{e:surj} \emph{Weak continuity property:} for an inverse sequence of compact pairs $(X_0,B_0)\supset (X_1,B_1)\supset\ldots$ the homomorphism $H_*\varprojlim (X_i,B_i)\to \varprojlim H_*(X_i,B_i)$ induced by the family of inclusion $\varprojlim(X_i,B_i)=\bigcap(X_i,B_i)\hookrightarrow (X_j,B_j)$ \emph{is surjective.}
\item \emph{Strong excision:} Let $f\:(X,X')\to(Y,Y')$ be a map of compact pairs that maps $X\setminus X'$ homeomorphically onto $Y\setminus Y'$. Then $f_*\:H_*(X,X')\to H_*(Y,Y')$ \emph{is an isomorphism}.
\end{enumerate}

{\v{C}ech} homology theory satisfies these properties as well as the Eilenberg--Steenrod axioms with the exception of the exactness axiom, and coincides with simplicial homology for triangulable spaces~\cite[Chapter 6]{Wallace:2007}.

In addition, we need a cohomology theory $H^*$ that satisfies the Eilenberg--Steenrod axioms and is paired with $H_*$ 
via a~{cap product} $H^n(X,A)\otimes H_k(X,A\cup B)\stackrel{\frown}{\longrightarrow} H_{k-n}(X,B)$
that is natural\footnote{Naturality of the cap product means that if $f: (X,A\cup B,A)\to (X;A'\cup B',A')$ is continuous, then
$f_*(f^*(\tilde\alpha)\frown \beta)=\tilde\alpha\frown f_*(\beta)$ for any $\beta\in H_*(X,A\cup B)$ and $\tilde\alpha\in H^*(X',A')$.} 
and coincides with the simplicial cap product when applied to simplicial complexes. We have not found any reference
for the definition of cap product in \v Cech (co)homology, so we present our own construction in Appendix~\ref{a:Cech}.
\remove{However, if $(X,A)$ is a triangulable pair, then we may as well use simplicial cap product and identify $\phi_f\frown H_*(X,A\cup B)$
with the corresponding subgroup of our homology theory. After slight changes in the proof of Theorem~\ref{t:cap}, all cap products
could be only applied to triangulable spaces. Thus  Theorem~\ref{t:cap} would still hold under the assumption that the pair $(X,A)$ can be triangulated, that is, the expression $\phi_f\frown H_k(X,A\cup B)$ makes sense there. At least for computability results, this is no severe restriction. 
With this in mind, we might as well use the Steenrod homology theory of compact metrizable spaces~\cite{Milnor:1960}
with cap product defined simplicially on triangulable spaces. The advantage of Steenrod homology is that it satisfies the exactness axiom. We also believe that it is possible to pair it with a suitable cohomology theory by a cap product but we do not know how.}
\begin{proof}[Proof of Theorem~\ref{t:cap}] 
We need to show that for any map $g$ with $\|g-f\|\le r$, the image of the inclusion-induced map $$H_*(g^{-1}(0), B)\to H_*(X,B)$$
contains the cap product of the first obstruction $\phi_f:=f^*(\xi)$ with all relative homology classes of $(X, A\cup B)$. Let us first restrict to the less technical case of $g$ being a strict  $r$-perturbation, that is, $\|g-f\|<r$.

Let us denote $X_0:=X=|f|^{-1}[0,r]$ and $A_0:=A=|f|^{-1}(r)$. Next we choose a decreasing positive sequence $\epsilon_1>\epsilon_2>\ldots$
with $\lim_{i\to\infty} \epsilon_i=0$ and with  $\epsilon_1<r-\|f-g\|.$ Thus $X_1:=|g|^{-1}[0,\epsilon_1]\subseteq X_0$ and $A_0':=|g|^{-1}[\epsilon_2,\infty]\cap X_0\supseteq |g|^{-1}[\epsilon_2,\epsilon_1]$. Then we for each $i>0$ we define 
\begin{itemize}
\item $X_i:= |g |^{-1}[0, \epsilon_i]$, 
\item and its subspaces $A_i:=|g|^{-1}[\epsilon_{i+1},\epsilon_i]$, $A'_i:=|g|^{-1}[\epsilon_{i+2},\epsilon_i]$ and $B_i:=B\cap X_i$.
\end{itemize}
Note that $\bigcap_i X_i=g^{-1}(0)$, and consequently, $\bigcap_i B_i=g^{-1}(0)\cap B$. 
For any given $\beta\in H_k(X,A\cup B)$, our strategy is to find homology classes $\alpha_i\in H_{k-n}(X_i,B_i),$ with $\alpha_0=\phi_f\frown \beta$, that fit into the sequence of maps $H_{k-n}(X_0,B_0) \leftarrow H_{k-n}(X_1,B_1)\leftarrow\ldots$ induced by inclusions. This gives an element in $\varprojlim H_{k-n}(X_i,B_i)$, and consequently by the weak continuity property (requirement \ref{e:surj} above), we get the desired element $\alpha\in H_{k-n}(g^{-1}(0),B)$.

The elements $\alpha_i$ will be constructed as cap products. To that end, we need to obtain ``analogs'' of $\beta$ and for that we will need a more complicated sequence of maps. It is the zig-zag sequence
\begin{equation}\label{e:zigzag}X_0\stackrel{\id}{\to}X_0\stackrel{\incl}{\hookleftarrow}X_1 \stackrel{\id}{\to} X_1 \stackrel{\incl}{\hookleftarrow} X_2 \stackrel{\id}{\to}\cdots \end{equation}
that restricts to the zig-zags
\begin{equation}\label{e:zigzag-a}A_0\stackrel{\incl}{\hookrightarrow}A'_0\stackrel{\incl} { \hookleftarrow}A_1 \stackrel{\incl}{\hookrightarrow} A'_1\stackrel {\incl} {\hookleftarrow} A_2
\stackrel{\incl}{\hookrightarrow}\cdots \end{equation}
and
\begin{equation}\label{e:zigzag-b}A_0\cup B_0\stackrel{\incl}{\hookrightarrow}A'_0\cup B_0\stackrel {\incl} {\hookleftarrow}A_1\cup B_1 \stackrel{\incl}{\hookrightarrow} A'_1\cup B_1\stackrel {\incl} {\hookleftarrow} A_2\cup B_2
\stackrel{\incl}{\hookrightarrow}\cdots \end{equation}

The pair $(X_{i+1},A_{i+1}\cup B_{i+1})$ is obtained from $(X_i,A'_i\cup B_{i})$ by excision of $|g|^{-1}(\epsilon_{i+1},\epsilon_i]$, that is, $X_{i+1}=X_i\setminus |g|^{-1}(\epsilon_{i+1},\epsilon_i]$ and $A_{i+1}\cup B_{i+1}=(A'_i\cup B_i)\setminus |g|^{-1}(\epsilon_{i+1},\epsilon_i]$.   Hence by excision,\footnote{Because of our careful choice of the spaces $A_i$ and $A'_i$ we do not need the strong excision here. However, we do not know how to avoid it in the case when $\|g-f\|=r$.} each inclusion of the pairs $(X_i,A'_i\cup B_i)\hookrightarrow (X_{i+1},A_{i+1}\cup B_{i+1})$ induces isomorphism on relative homology groups. Therefore the zig-zag sequences \eqref{e:zigzag} and \eqref{e:zigzag-b} induce a  sequence

$$\begin{array}{c@{\ }c@{\ }c@{\ }c@{\ }c@{\ }c@{\ }c@{\ }c@{\ }c}
H_k(X_0,A_0\cup B_0)&\rightarrow&H_k(X_0,A'_0\cup B_0)&\cong&H_k(X_1,A_1\cup B_1)& \rightarrow&H_k(X_1, A'_1\cup B_1)&\cong&\cdots\\
\rotatebox{90}{$\in$}&& \rotatebox{90}{$\in$}&& \rotatebox{90}{$\in$}&& \rotatebox{90}{$\in$}&&\\[-1mm]
\beta_0:=\beta\!\!\!\!\!\!\!\!\!\!\!\!\!\!&&\beta'_0&&\beta_1&&\beta'_1&&\cdots
\end{array} $$
that can be made pointed by choosing the distinguished homology classes $\beta_i\in H_k(X_i,A_i\cup B_i)$ and $\beta'_i\in H_k(X_i,A'_i\cup B_i)$ that are the images of $\beta_0:=\beta\in H_k(X_,A\cup B)$ in this sequence.

Similarly, we want to construct a pointed zig-zag sequence in cohomology induced by \eqref{e:zigzag} and \eqref{e:zigzag-a}. The distinguished elements $\phi_i\in H^n(X_i,A_i)$ and $\phi'_i\in H^n(X_i,A'_i)$ are defined as the pullbacks of the fundamental cohomology class $\xi\in H^n(\R^n,\R^n\setminus\{0\})$ by the restrictions of $g$. Because of the functoriality of cohomology, $\phi_i$ and $\phi_i'$ fit into the sequence induced by \eqref{e:zigzag} and \eqref{e:zigzag-a}:
$$\begin{array}{c@{\ }c@{\ }c@{\ }c@{\ }c@{\ }c@{\ }c@{\ }c@{\
}c}
H^n(X_0,A_0)&\leftarrow&H^n(X_0,A'_0)&\rightarrow&H^n(X_1,A_1)& \leftarrow&H^n(X_1, A'_1)&\rightarrow&\cdots\\
\rotatebox{90}{$\in$}&& \rotatebox{90}{$\in$}&& \rotatebox{90}{$\in$}&&
\rotatebox{90}{$\in$}&&\\[-1mm]
{\phi_0}&&\phi_0'&&\phi_1&&\phi_1'&&\cdots\\
\end{array} $$
Since $g$ is an $r$-perturbation of $f$ and thus $g|_{(X,A)}$ is homotopic to $f|_{(X,A)}$ via the straight line homotopy, we have that $\phi_0=\phi_f\in H^n(X,A)$.

From the naturality of the cap product we get that the elements $\phi_i\frown \beta_i$ and $\phi_i'\frown \beta'_i$ fit into the sequence
$$\begin{array}{c@{\ }c@{\ }c@{\ }c@{\ }c@{\ }c@{\ }c@{\ }c@{\
}c}
H_{k-n}(X_0,B_0)&\stackrel{\id}{\cong}&H_{k-n}(X_0,B_0)& \leftarrow & H_{k-n}(X_1, B_1)& \stackrel{\id}{\cong}&H_{k-n}(X_1, B_1)& \leftarrow&\cdots\\
\rotatebox{90}{$\in$}&& \rotatebox{90}{$\in$}&& \rotatebox{90} {$\in$}&& \rotatebox{90}{$\in$}&&\\[-1mm]
\hskip-1.85mm{\phi_0}\frown\beta_0&&\hskip-1.85mm\phi_o'\frown \beta'_0&&\hskip-1.85mm\phi_1\frown\beta_1&& \hskip-1.85mm \phi_1' \frown \beta'_1 &&\cdots\\
\hskip0.3mm\rotatebox{90}{$=$}\\[-1mm]
\hskip-1.85mm{\phi_f}\frown\beta\\
\end{array} $$
that is induced by \eqref{e:zigzag}, that is, each $H_{k-n} (X_i,B_i) \stackrel{\id}{\cong} H_{k-n}(X_i,B_i)$ is induced by the identity $X_i\stackrel{\cong}{\to} X_i$ and each map $ H_{k-n}(X_i,B_i) \leftarrow H_{k-n}(X_{i+1}, B_{i+1})$ is induced by the inclusion $X_i\hookleftarrow X_{i+1}$. 
Hence $\alpha_i:=\phi_i\frown \beta_i$ are the desired elements and thus there is an element $\tilde\alpha:=(\alpha_0,\alpha_1,\ldots)$ in $\varprojlim H_{k-n}(X_i,B_i)$. 

We recall that  the weak continuity property of the homology theory $H_*$ assures the surjectivity of the the map \begin{equation}\label{e:surjectivity}(\iota_i)_{i\ge 0}\:H_{k-n}\Big(\bigcap X_i,B\Big)\to \varprojlim
H_{k-n}(X_i,B)\end{equation} where each component $\iota_i$ is induced by the inclusion $\bigcap_i X_i\hookrightarrow X_i$.  Let $\alpha \in H_{k-n}(g^{-1}(0), B)$ be arbitrary preimage of $\tilde\alpha$ under the surjection \eqref{e:surjectivity}. By construction, $\alpha$ is mapped to $\alpha_0=\phi_f\frown \beta$ by the map $\iota_0$.

It remains to prove the theorem in the case when $\|g-f\|=r$. The proof goes along the same lines with only the following differences:
\begin{itemize}
\item  For arbitrary decreasing sequence $1=\epsilon_0>\epsilon_1>\epsilon_2> \ldots$ with $\lim \epsilon_i=0$ we define $h_i:=\epsilon_i f+ (1-\epsilon _i)g$ for $i\ge0$.
We will furthermore
need that $2\epsilon_{i+1}>\epsilon_i$ for every $i\ge0$. Let 
$$\begin{array}{c@{\,}l}
X_i&:=|h_i|^{-1}[0,\epsilon_ir], \\[-.5mm]
\ \rotatebox{90}{$\subseteq$} \\[-1.5mm]
A'_{i}&:=\{x\in X\:|h_i(x)|\le \epsilon_i r\text{ and }|h_{i+1}(x)| \ge \epsilon_{i+1} r\}\text{ and }\\[-.5mm]
\ \rotatebox{90}{$\subseteq$} \\[-1.5mm]
A_i&:=|h_i|^{-1}(\epsilon_i r).\end{array}$$
We have $A_i\subseteq A'_i$ because by definition $\|h_i-h_{i+1}\|\le (\epsilon_i -\epsilon_{i+1})r$ and thus
\(|h_i(x)|= \epsilon_i r\) implies \(|h_{i+1}(x)|\geq \epsilon_{i+1}
r\). Similarly $A_{i+1}\subseteq A'_i$ and $X_{i+1}\subseteq X_i$.
Therefore as before, the zig-zag sequence \eqref{e:zigzag} restricts to \eqref{e:zigzag-a} and \eqref{e:zigzag-b}. 

\item The homology classes $\beta_i$ and $\beta'_i$  are defined as above. We only need to use the strong excision for the inclusion $(X_i,A'_i\cup B_i)\hookleftarrow (X_{i+1},A_{i+1}\cup B_{i+1})$.

\item We define the cohomology classes $\phi_i:=h_i^*(\xi)$ and $\phi_i':= h_{i+1}^*(\xi)$. We only need to check that $h_i$ is homotopic to $h_{i+1}$ as a map of pairs $(X_i,A'_i)\to (\R^n,\R^n\setminus\{0\})$. Indeed, they are homotopic via the straight-line homotopy since $|h_{i+1}(x)|\ge \epsilon_{i+1}r$ implies $|h_i(x)|\ge \epsilon_ {i+1}r- (\epsilon_i-\epsilon_{i+1})r=(2\epsilon_{i+1}-\epsilon_i)r >0$.
We used the inequality $2\epsilon_{i+1}>\epsilon_i$ which was our requirement on the sequence $(\epsilon_i)_{i>0}$.
We also have $\phi_0=\phi_f$ as $h_0=f$ and $(X_0,A_0)=(X,A)$.

\item We continue by defining cap products $\alpha_i$, their limit $\tilde \alpha$ and its preimage $\alpha$ under the surjection $H_{k-n}(\bigcap_i X_i,B)\to \varprojlim_i H_{k-n}(X_i,B)$. To finish the proof we claim that $\bigcap_i X_i=g^{-1}(0)$. Indeed, $g(x)=0$ implies $h_i(x)\le \|h_i-g\|=\epsilon_i r$ for each $i$ and $g(x)>0$ implies $h_i(x)>0$ for  $i$ such that $2\epsilon_i r<|g(x)|$.
\end{itemize}
\end{proof}

The surjectivity of (\ref{e:surjectivity}) and the strong excision is not only a crucial step for Theorem~\ref{t:cap} but implicitly also
for the results stated in~\cite[p. 16]{interlevel}. If we defined well groups by means of singular homology,
then even in a basic example $f(x,y)=x^2+y^2-2$ and $r=1$,
the first well group $U_1(f,r)$ would be trivial. The zero set of any $1$-perturbation $g$ is contained in 
the annulus $X:=\{(x,y):\,\,1 \leq x^2+y^2\leq 3\}$ and the two components of $\partial X$ are not in the same connected components
of $\{x\in X:\,g(x)\neq 0\}$. However, we could construct a ``wild'' $1$-perturbation $g$ of $f$ such that $g^{-1}(0)$ 
is a Warsaw circle~\cite{Mardevsic:1997} which is, roughly speaking, a circle with infinite length, trivial first singular homology, but nontrivial \v{C}ech homology.
Thus \v{C}ech homology serves as a better theoretical basis for
the well groups. Another solution to avoid problems with wild zero sets would be to restrict ourselves to ``nice'' perturbations, for example
piecewise linear or smooth and transverse to~$0$. 
Such approach would lead, to the best of our knowledge, to identical 
results.

\begin{proof}[Proof of Theorem~\ref{o:comp}] Under the assumption on computer representation of $K$ and $f$, the pair $(X,A)$ is homeomorphic to a computable simplicial pair $(X',A')$ such that
$X'$ is a subcomplex of a subdivision $K'$ of $K$~\cite[Lemma
3.4]{nondec}. 
Therefore, the induced triangulation $B'$ of $B\cap X'$ is a
subcomplex of $X'$. Furthermore, a simplicial approximation \(f' \:A'\to S'\) of $f|_A\: A\to S^{n-1}$ can be
computed. The computation is implicit in the proof of Theorem
1.2 in~\cite{nondec} where the sphere $S^{n-1}$ is approximated
by the boundary $S'$ of the $n$-dimensional cross polytope $B'$. The simplicial approximation
$(X',A')\to (B',S')$ of $f|_X$ can be constructed consequently by sending
each vertex of $X\setminus A$ to an arbitrary point in the interior
of the cross polytope, say $0\in\R^n$. 
The pullback of a cohomology class can be computed by standard
algorithms. Therefore $\phi_f$ and $H_*(X,B)$ can be computed and
the explicit formula for the cap product in~\cite[Section 2.1]{prasolov}
yields the computation of $\phi_f\frown H_*(X,B)$. 
All this can be done without any restriction on
the dimensions of the considered simplicial complexes.  
\end{proof}
\heading{Well diagram associated with $\phi\frown H_*(X,A\cup B)$.}
Let $r_1>r_2>0$ and let $X_1$, $X_2$, $A_1$, $A_2$ be $|f|^{-1}[0,r_1]$, $|f|^{-1}[0,r_2]$, $|f|^{-1}\{r_1\}$, $|f|^{-1}\{r_2\}$ respectively,
$\phi_1$, $\phi_2$ be the respective obstructions.
Further, let $A_1':=|f|^{-1}[r_2,r_1]$ and $\phi_1'=f^*(\xi)\in H^n(X_1, A_1')$ be the pullback of the fundamental class 
$\xi\in H^n(\R^n, \R^n\setminus\{0\})$. The inclusions $(X_1, A_1)\subseteq (X_1, A_1')\supseteq (X_2, A_2)$
induce cohomology maps that take $\phi_1'$ to $\phi_1$ resp. $\phi_2$.
Let us denote, for simplicity, by $V_1$ the group $\phi_1\frown H_*(X_1, A_1\cup B)$, $V_2:=\phi_2\frown H_*(X_2, A_2\cup B)$
and $V_1':=\phi_1'\frown H_*(X_1, A_1'\cup B)$. Further, let $U_1$ resp. $U_2$ be the well groups $U(f,r_1)$ resp. $U(f, r_2)$.
\begin{wrapfigure}{l}{0.6\textwidth}
\begin{equation*}
\hskip-0.6cm
\label{e:well diagram}
\begin{diagram}
V_2 & & \lTo^{\iota_{12}} & &  V_1  \\
\dInto & & & & \dInto \\
U_2 & \rTo^{a} & U_2/(U_2\cap \ker i_{21})& \lTo^{b} & U_1\\
\dInto &&& \luTo^{\simeq } & \dInto \\
H_*(X_{2}, B) & \rTo^{i_{21}}& H_*(X_{1}, B) & \lInto & i_{21}(U_2)\\
\end{diagram}.
\end{equation*}
\end{wrapfigure}
In this section, we analyze the relation between $V_1$ and $V_2$.
First let $i_1$ be a map from $V_1$ to $V_1'$ that maps $\phi_1\frown\beta_1$ to
$\phi_1'\frown i_*(\beta_1)$. By the naturality of cap product, $\phi_1\frown\beta_1=\phi_1'\frown i_*(\beta_1)$,
so $i_1$ is an inclusion.
By excision, there is an inclusion-induced isomorphisms 
$i_1': H_*(X_2, A_2\cup B)\stackrel{\sim}{\rightarrow} H_*(X_1, A_1'\cup B)$ 
and its inverse induces an isomorphism 
$i_2: V_1' \stackrel{\sim}{\to} V_2$
by mapping $\phi_1'\frown \beta_1'$ to $\phi_2\frown (i_1')^{-1}(\beta_1')$.
The composition $i_2\circ i_1=:\iota_{12}$ is a homomorphism from $V_1$ to $V_2$. 
Being the composition of an inclusion
and an isomorphism, $\iota_{12}$ is an injection and one easily verifies that the inclusion-induced map 
$i_{21}: H_*(X_2, B){\to} H_*(X_1, B)$ satisfies $i_{21}\circ\iota_{12}=\mathrm{id}|_{V_1}$.
It follows that $\{V(r_i), \iota_{i,i+1}\}_{r_i>r_{i+1}}$ is a persistence module consisting of shrinking abelian groups
and injections $V_{i}\to V_{i+1}$ for $r_i>r_{i+1}$. 
The relation between $\iota$ and well diagrams described in~\cite{Edelsbrunner:2011} is reflected by the commutative diagram above.

\remove{The rank of $U(r)$ resp. $V(r)$ can only decrease with increasing $r$. In~\cite{Edelsbrunner:2011}, authors encode the properties of well groups
to a well diagram that consists of pairs $\{(r_j, \mu_j)\}$ where $r_j$ is a number in which the rank of $U$ decreases by $\mu_j\in\N$. 
Using computable information about $\{V(r)\}$, we may define a~diagram consisting of pairs $(r_j', \mu_j')$ 
where the rank of $V(r)$ decreases in $r_j'$ by $\mu_j'$. 
This is a subdiagram of the well diagram in the following sense: each $r_k'$ is then contained in $\{r_j\}_j$ and $\mu_k'\leq \mu_k$.}

\heading{The idea behind the proof of Theorem~\ref{t:Poincare}.} In the special case when $X$ is a smooth $m$-manifold with $A=\partial X$, 
the zero set of any smooth $r$-perturbation $g$ transverse to $0$ 
is an $(m-n)$-dimensional smooth submanifold of $X$. It is not so difficult to show that its fundamental class $[g^{-1}(0)]$ is mapped by the inclusion-induced map to $\phi_f\frown [X]$,
where $[X]\in H_m(X,\partial X)$ is the fundamental class of $X$. If $g^{-1}(0)$ is connected, then
$H_{m-n}(g^{-1}(0))$ is generated by its fundamental class and we immediately obtain the reverse inclusion
$\phi_f\frown H_m (X,A)\supseteq U_{m-n}(f,r)$. The nontrivial part in the proof of Theorem~\ref{t:Poincare}
is to show that in the indicated dimension range, we can find a perturbation $g$ so that $g^{-1}(0)$ is connected.
The full proof is in  Appendix~\ref{s:Poincare}.

\section{Incompleteness of well groups}\label{s:incompleteness}
In this section, we study the case when the first obstruction $\phi_f$ is trivial and thus the map $f|_A$ can be extended to a map $f^{(n)}\:X^{(n)}\to S^{n-1}$ on the $n$-skeleton $X^{(n)}$ of $X$. Observation~\ref{o:triv} (proved in Appendix~\ref{a:triv}) implies that the only possibly nontrivial well groups are $U_j(f,r)$
for $j\leq m-n-1$. 

The following lemma summarizes the necessary tools for the constructions of this section. They directly follow from Lemma~\ref{l:perturb-ext} in Appendix~\ref{a:characterization} and from \cite[Lemma 3.3]{nondec}.
\begin{lemma}\label{l:ext-pert}
Let $f\:K\to \R^n$ be a map on a compact Hausdorff space, $r>0$, and let  us denote the
pair of spaces $|f|^{-1}[0,r]$ and $|f|^{-1}\{r\}$ by $X$ and
$A$, respectively. Then \begin{enumerate}
\item \label{n:one}
for each extension $e\:X\to\R^n$ of $f|_A$ we can find a strict $r$-perturbation $g$ of $f$ with $g^{-1}(0)= e^{-1}(0)$;
\item \label{n:two}
for each $r$-perturbation $g$ of $f$ without a root there is an extension $e\:X\to \R^n\setminus\{0\}$ of $f|_A$ (without a root).
\end{enumerate}
\end{lemma}

In the following we want to show that well groups can fail to distinguish between maps with intrinsically different families of zero sets. Namely, in the following examples we present maps $f$ and $f'$ with  $U_0(f,r)=U_0(f',r)=\Z$ for each $r\leq1$ and $U_i(f,r)=U_i(f,r)=0$ for each $r\leq1$ and $i>0$. However, $Z_r(f)$ will be significantly different from $Z_r(f')$.


\begin{proof}[{Proof of Theorem~\ref{t:torus}}]
We have that $B=\emptyset$ and $K=S^j\times B^i$, where $B^i$ is represented by the unit ball in $\R^i$ and $j=m-i$. Let the maps $f,f': K\to\R^n$ be defined by 
$$
f(x,y):=|y| \,\varphi(x,y/|y|)
\quad \text{ and }\quad 
f'(x,y):=|y|\varphi' (x,y/|y|)
$$
where $\varphi,\varphi'\:S^j\times S^{i-1}\to S^{n-1}\subseteq\R^n$ are defined by
\begin{itemize}
\item $\varphi(x,y):=\mu(y)$ where $\mu: S^{i-1}\to S^{n-1}$ is an arbitrary nontrivial map. 
\item $\varphi'$ is defined as the composition $S^j\times S^{i-1}\to S^{m-1}\stackrel{\nu}{\to} S^{n-1}$ where the first map is the quotient map $S^j\times S^{i-1}\to S^j\wedge S^{i-1}\cong S^{m-1}$ and $\nu$ is an arbitrary nontrivial map.
In other words, we require that the composition $\varphi'\Phi$---where $\Phi$ denotes the characteristic map of the $(m-1)$-cell of $S^j\times S^{i-1}$---is equal to the composition $\nu q$, where $q$ is the quotient map $B^{m-1}\to B^{m-1}/(\partial B^{m-1})\cong S^{m-1}$.\end{itemize}

\heading{Well groups computation.} Next we prove that the well groups of $U_*(f,r)$ and $U_*(f',r)$
are the same for $r\in(0,1]$, namely, nonzero only in dimension
$0$, where they are isomorphic to $\Z$. 
We obviously have $X=S^j\times\{y\in\R^i\:|y|\le r\}\simeq S^j\times B^i$ and $A=\partial X$ for both maps. The restriction $f|_A$ and $f'|_A$ are equal to $\varphi$ and $\varphi'$ (after normalization). We first prove that $U_0(f,1)\cong U_0(f',1)\cong\Z$. This fact follows from $H_0(X)\cong\Z$, from non-extendability of $\varphi$ and $\varphi'$ and from Lemma~\ref{l:ext-pert} part~\ref{n:two} (or \cite[Lemma 3.3]{nondec}). \begin{lemma}\label{l:nonext}
The map $\varphi'$ cannot be extended to a map $X\to S^{n-1}$.
\end{lemma}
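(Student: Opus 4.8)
Proof plan for Lemma~\ref{l:nonext} ($\varphi'$ does not extend to $X \to S^{n-1}$).

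The plan is to reduce the non-extendability of $\varphi'$ to the single assertion that $\varphi' = \nu\circ q$ is not null-homotopic, and then to prove that assertion by a Puppe-sequence argument.

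First I would replace $X$ by a sphere up to homotopy: $X = S^j\times B^i_r$ deformation retracts onto $S^j\times\{0\}\cong S^j$, and under this retraction the inclusion $A = S^j\times S^{i-1}\hookrightarrow X$ becomes homotopic to the first-coordinate projection $\mathrm{pr}_1\colon S^j\times S^{i-1}\to S^j$. Consequently $\varphi'$ extends to a map $X\to S^{n-1}$ if and only if $[\varphi']$ lies in the image of $\mathrm{pr}_1^{\,*}\colon \pi_j(S^{n-1}) = [S^j,S^{n-1}]\to [S^j\times S^{i-1},S^{n-1}]$, i.e. if and only if $\varphi'\simeq \psi\circ \mathrm{pr}_1$ for some $\psi\colon S^j\to S^{n-1}$. (Here $j = m-i\ge 1$: the hypotheses $m-i<n<i<(m+n+1)/2$ force $j\ge 1$, since $j=0$ would give $m=i$, which is incompatible with $n<i<(i+n+1)/2$.)

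Now assume such a $\psi$ exists. Since $\varphi' = \nu\circ q$ factors through the collapse $q\colon S^j\times S^{i-1}\to S^j\wedge S^{i-1}$, the map $\varphi'$ is constant on $S^j\vee S^{i-1}$; in particular $\varphi'|_{S^j\times\{y_0\}}$ is constant, where $y_0$ denotes the basepoint of $S^{i-1}$. Restricting the homotopy $\varphi'\simeq \psi\circ\mathrm{pr}_1$ to $S^j\times\{y_0\}$, on which $\mathrm{pr}_1$ restricts to $\mathrm{id}_{S^j}$, shows that $\psi$ is null-homotopic, and hence $\varphi'\simeq \psi\circ\mathrm{pr}_1$ is null-homotopic. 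So it suffices to show that $\varphi' = \nu\circ q$ is not null-homotopic, which will give the desired contradiction.

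To see that $\nu\circ q\not\simeq 0$, apply $[-,S^{n-1}]$ to the Puppe cofiber sequence of the inclusion $S^j\vee S^{i-1}\hookrightarrow S^j\times S^{i-1}$, whose cofiber is $S^j\wedge S^{i-1}\cong S^{m-1}$ and whose cofiber map is $q$. The connecting map $S^{m-1}\to\Sigma(S^j\vee S^{i-1}) = S^{j+1}\vee S^i$ is the suspension of the attaching map of the top cell of $S^j\times S^{i-1}$, namely $\Sigma[\iota_j,\iota_{i-1}]$; suspensions of Whitehead products are null-homotopic, so the induced homomorphism $[S^{j+1}\vee S^i,S^{n-1}]\to[S^{m-1},S^{n-1}]$ is trivial. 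By exactness of the resulting sequence of pointed sets at $[S^{m-1},S^{n-1}]$, the preimage of the basepoint under $q^{\,*}\colon [S^{m-1},S^{n-1}]\to[S^j\times S^{i-1},S^{n-1}]$ is just the basepoint. Since $\nu$ was chosen nontrivial in $\pi_{m-1}(S^{n-1})$ (which is nonzero by hypothesis), we conclude $[\varphi'] = q^{\,*}[\nu]\neq 0$, contradicting the previous paragraph. The step I expect to require the most care is the identification of the Puppe connecting map with the suspended Whitehead product $\Sigma[\iota_j,\iota_{i-1}]$ and its vanishing; once $X$ is replaced by $S^j$ up to homotopy, the rest is formal.
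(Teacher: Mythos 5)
Your proof is correct, and it takes a genuinely different route from the paper's. The paper argues cell-by-cell via obstruction theory: it fixes an arbitrary extension $h$ of $\varphi'$ to the $i$-cell of $X$, constructs an auxiliary map $h'$ that is constant on $\partial X$, shows $h'\psi_1$ is nullhomotopic using bilinearity of the Whitehead product (the product of a trivial element with anything is trivial), and then shows $[h\psi_1]=[h'\psi_1]\pm[\nu]$ via the difference-cochain formalism, so that the obstruction on the top cell is $\pm[\nu]\neq 0$ regardless of the choice of $h$. Your argument instead collapses the extension problem immediately: since $X\simeq S^{m-i}$, extendability of $\varphi'$ is equivalent to $\varphi'$ factoring through $\mathrm{pr}_1$ up to homotopy, and because $\varphi'=\nu q$ is constant on $S^{m-i}\vee S^{i-1}$, any such factoring would force $\varphi'$ to be nullhomotopic; you then rule this out by applying $[-,S^{n-1}]$ to the Puppe cofiber sequence of $S^{m-i}\vee S^{i-1}\hookrightarrow S^{m-i}\times S^{i-1}$, noting that the connecting map is the suspended Whitehead product $\Sigma[\iota_{m-i},\iota_{i-1}]$, which vanishes, hence $q^*$ has trivial kernel and $q^*[\nu]\neq 0$. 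Both proofs ultimately hinge on the same two facts---the attaching map of the top cell of $S^{m-i}\times S^{i-1}$ is a Whitehead product, and the relevant vanishing of Whitehead products---but your proof replaces the explicit obstruction-cocycle bookkeeping with the deformation retraction and the exactness of the Puppe sequence, which is conceptually cleaner for this particular $X$. The trade-off is that the paper's cell-by-cell argument is designed to generalize to Theorem~\ref{t:triviality}, where $X$ is an arbitrary finite simplicial pair and no deformation retraction onto a sphere is available; your argument exploits the product structure of $X=S^{m-i}\times B^i$ in an essential way. One small point worth making explicit: the Puppe sequence gives injectivity on pointed homotopy classes, while the factorization-through-$\mathrm{pr}_1$ argument is about free homotopy; these agree here because the hypotheses $m-i<n<i$ and $\pi_{m-1}(S^{n-1})\neq 0$ force $n\geq 3$, so $S^{n-1}$ is simply connected.
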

We postpone the proof to Appendix~\ref{a:nonext}.
Since the map $\mu\:S^{i-1}\to S^{n-1}$ cannot be extended to $B^i\supset S^{i-1}$, also $\varphi$ cannot be extended to $X$. 

Since then only the $j$th homology group of $X$ is nontrivial, the remaining task is to show that $U_j(f,1)\cong U_j(f',1) \cong 0$. We do so by presenting two $r$-perturbations $g$ and $g'$ of $f$ and $f'$, respectively:\begin{itemize}
\item  $g(x,y):=f(x,y)-rx=|y|\mu(y/|y|)-rx$ where we consider $S^j\subseteq\R^{j+1}$ as a subset of $\R^n$ naturally embedded in the first $j+1$ coordinates (here we need that $j=m-i<n$).\item  We first construct an extension  $e'\:X\to \R^n$ of $\varphi' =f'|_A$ and then the $r$-perturbation $g'$ is obtained by Lemma~\ref{l:ext-pert} part~\ref{n:one}. The extension $e'$ is  defined as constant on the single $i$-cell of $X$, that is, $e'(x_0,y)$ is put equal to the basepoint of $S^{n-1}\subseteq \R^n$. On the remaining  $m$-cell $B^m\cong \{z\in\R^m\:|z|\le 1\}$ of $X$ we define $e'(z):=|z|e'(z/|z|)$, where each point $z$ is identified with a point of $X$ via the characteristic map $\Psi_1\:B^m\to X$ of the   $m$-cell $B^m$.\footnote{Thus the formal definition is $e'(\Psi_1(z)):=|z|e'\big(\Psi_1(z/|z|)\big)$.}
 \end{itemize}
By definition the only root of $g'$ is the single point $\Psi_1(0)$ of the interior of $X$. Therefore $U_j(f,1)\cong 0$. Note that the role of $\Psi_1(0)$ could be played by an arbitrary point in the interior of $X$.\footnote{ With more effort we could show that for \emph{any} point $z$ of $X$ there is an $r$-perturbation of $f'$ with $z$ being its only zero point.}  

The zero set $g^{-1}(0)=\{(x,y):\,|y|=r$ and  $\mu(y/|y|)=x\}$ is by definition homeomorphic
to the pullback (i.e., a limit) of the diagram
\begin{equation}\label{e:pullback}
\xymatrix{& S^{i-1}\ar[d]^{\mu}\\
S^j\ar[r]^{\iota}&S^{n-1}}\end{equation}
where $\iota$ is the equatorial embedding, i.e., sends each element $x$ to $(x,0,0,\ldots)$.
In plain words, the zero set is the $\mu$-preimage of the equatorial $j$-subsphere of $S^{n-1}$. We will prove that under our assumptions on dimensions, this  is the
 $(m-n)$-sphere $S^{m-n}$. Then from $m-n>m-i=j$ it will follow that $H_j(g^{-1}(0))\cong
0$ which proves Theorem~\ref{t:torus}. 

The topology of the pullback is particularly easy to see in the case when $j=n-1$ and $\iota$ is the identity. 
There it is simply the domain of $\mu$, that is, $S^{i-1}$ where $i-1=m-j-1=m-n$.

In the general case, the only additional tool we use to identify the pullback is the Freudenthal suspension theorem.
The pullback is homeomorphic to the $\mu$-preimage
of the
equatorial subsphere $S^{m-i}\subseteq S^{n-1}$.
By Freudenthal suspension theorem $\mu$ is homotopic
to an iterated suspension $\Sigma^a\eta$ for some $\eta\:S^{i-1-a}\to
S^{n-1-a}$ assuming 
$i-1-a\le2(n-1-a)-1$.
We want to choose $a$ so that  $n-1-a=m-i$ and thus images 
$\Im(\eta)=S^{n-1-a}$ and $\Im(\iota)=S^j\subseteq S^{n-1}$ coincide (since $j=m-i$ by definition).
The last inequality 
with the choice  $a=n-1-m+i$  is equivalent to the bound $i\leq (m+n-1)/2$ from the hypotheses of the theorem.
In our example, we may have chosen $f$ in such a way that  $\mu=\Sigma^a\eta$. But even for the choices of $\mu$ only homotopic to $\Sigma^a\eta$
we could have changed $f$ on a neighborhood of $\partial
K$ by a suitable homotopy. 
To finish the proof we use the fact that, by the definition of
 suspension, the $\mu$-preimage of $S^{m-i}\subseteq S^{n-1}$
is identical to the $\eta$-preimage of $S^{m-i}$, that is $S^{i-1-j}=S^{m-n}$.

\heading{Difference between $Z_r(f)$ and $Z_r(f')$.}
Because the map $\mu$ is homotopically nontrivial, the zero set of each extension $e\:X\to\R^n $ of $f|_A$ intersects each ``section''  $\{x\}\times B^i$ of $X$. By  Lemma~\ref{l:ext-pert} part~\ref{n:two} (or \cite[Lemma 3.3]{nondec}) applied to each
restriction $f|_{\{x\}\times B^i}$,  the same holds for $r$-perturbations
$g$ of $f$ as well. In other words, the formula ``for each $x\in S^j$ there is $y\in B^i$ such that $f(x,y)=0$'' is \emph{satisfied robustly}, that is
$$\forall Z\in Z_r(f):\forall x\in S^j: \exists y\in B^i: (x,y) \in Z$$ is satisfied. 
The above formula is obviously not true for $f'$ as can be seen on the $r$-perturbations $g'$. In particular, for every $r\in(0,1]$ the family $Z_r(f')$ contains a singleton.\end{proof}

\heading{Robust optimization.}
As an example of another relevant property of $Z_r(f)$ not captured by the well groups, we mention the following. For any given $u\:K\to\R$, we may want to know what is the \emph{$r$-robust maximum 
of $u$ over the zero set of $f$}, i.e., $\inf_{Z\in Z_r(f)}\max_{z\in Z} u(z)$. Let, for instance, $u(x,y)=u(x)$ depend on the first coordinate only. Then the $r$-robust maximum for $f$ is equal to $\max_{x\in S^j} u(x)$ as follows from the discussion in the previous paragraph. On the other hand, the $r$-robust maximum for $f'$ is equal to $\min_x u(x)$ and is attained in $g'$ when we set the value $\Psi_1(0):=(\argmin_{x\in S^j}u(x),0)$ from the proof above. This holds for $r$ arbitrarily small. The robust optima constitutes another and, in our opinion, practically relevant  quantity whose approximation cannot be derived from well groups. 

\remove{
\heading{Further remarks on Theorem~\ref{t:torus}.} 
We first want to indicate that in some sense the maps $f$ and $f'$ are no peculiar examples but rather typical choices. 
More precisely, we assume that $r>0$ is fixed and that $X=S^j\times B^i$ and $A=\partial X$. (Note that in the natural cell structure of $X$  there is only one $i$-dimensional and one $(i+j)$-dimensional cell outside of $A$.) It can be easily proved that under these assumptions the maps $f$ and $f'$ can be chosen arbitrarily in such a way that \begin{itemize}
\item $f|_A$ cannot be extended to $X^{(i)}$ (it extends to $
X^{(i-1)}$ trivially as $A=X^{(i-1)}$) and
\item $f'|_A$ extends to $X^{(i)}$ but not to $X$.\footnote{
The only remaining category consists of those $f''$ where $f''|_A$
extends to whole of $X$, i.e., $U_*(f,r)\cong 0$, or equivalently,
$\emptyset\in Z_r(f)$.}
\end{itemize}
The only addition needed to prove this more general version is in the computation of $U_{m-i}(f,r)$. For that we can either use Theorem~\ref{t:triviality} when $i< (m+n)/2$ or enhance the proof of Theorem~\ref{t:torus} when $i=(m+n)/2$ which we omit here. 
Note that the nonextendability properties of $f$ and $f'$ require nontriviality of the homotopy groups of spheres as in the hypothesis of Theorem~\ref{t:torus}. Then only for the requirement $i>n$ we know that is strict. The other two inequalities are used to find the map $\iota$ such that the pullback~\eqref{e:pullback} is connected enough. The inequality $i<(m+n+1)/2$ can be relaxed to requiring the existence of $[\mu]\in\pi_{i-1}(S^{n-1})$ such that $[\mu]=\Sigma^a \eta$ for $a$ sufficiently large as stated in the proof.

Finally, we remark that the same incompleteness results could be achieved for even more realistic domain $K=B^j\times B^i\cong B^m$. We only need to choose $f$ and $f'$ with $X=B^j\times B^i_r$ and $A=B^j\times (\partial B^i_r)$ and with the same properties as above. Then for the natural choice $B=\partial K$ and under the same hypotheses, both well groups will be equal.

\heading{Sketch of the proof of Lemma~\ref{l:nonext}.} 
The ultimate claim is that $\varphi'$ cannot be extended to the $m$-cell of $X$ no matter how the extension on the $i$-cell was chosen. To this end, we need two properties of the obstruction to extendability on the $m$-cell (which is an element of $\pi_{m-1} (S^{n-1})$): \begin{enumerate}

\item First, that the obstruction is independent of the choice of the extension on the $i$-cell. This essentially follows from the bilinearity of the Whitehead product $\pi_{i}(S^{n-1})\otimes \pi_{m-i}(S^{n-1})\to \pi_{m-1} (S^{n-1})$, namely, that the Whitehead product of a trivial element with an arbitrary element is again a trivial element.  
\item Second, that the obstruction depends linearly on the choice of the element $[\nu]\in \pi_{m-1}(S^{n-1})$ in the definition of the map $\varphi'$. This amounts to the basic obstruction theory and the cell structure of the solid torus.
\end{enumerate}
The full proof is presented in Appendix~\ref{a:nonext}.
}

\heading{Acknowledgements.} We are grateful to Ryan Budnay, Martin
\v Cadek,
Marek Filakovsk\'y, Tom Goodwillie, Amit Patel, Martin Tancer,
Luk\'a\v s Vok\v{r}\'inek and Uli Wagner for useful discussions.

\bibliographystyle{plain}
\bibliography{sratscha,Postnikov}

\newpage
\appendix

\section{The nonextendability proof (proof of Lemma~\ref{l:nonext})}
\label{a:nonext}
\begin{proof}[Proof of Lemma~\ref{l:nonext}] 

As our ultimate claim will be that $\varphi'$ cannot be extended to the $m$-cell of $X=S^j\times B^i$, we will need to analyse the gluing map of that cell. In particular, we need to establish its relation to the gluing map of the $m$-cell of $T=S^j\times S^i$. In the first row of the following commutative diagram we have the factorization of the characteristic map $\Psi$ of the $m$-cell of $T$ into the characteristic map $\Psi_1$ of the $m$-cell of $X$ and another quotient map $\Psi_2$.

$$\xymatrix{
B^j\times B^i\ar[r]^-{\Psi_1}&X=(B^j/\partial B^j)\times B^i \ar[r]^-{\Psi_2}&T=(B^j/\partial B^j)\times (B^i/\partial B^i)\\
\partial(B^j\times B^i)\ar@{^(->}[u]\ar[r]^-{\psi_1} &
X^{(m-1)} \ar@{^(->}[u]\ar[r]^-{\psi_2} &
T^{(m-1)}=S^j\vee S^i\ar@{^(->}[u]
}
$$
Note that above we identify spheres with the quotients of balls by their boundary. The restriction of each characteristic map
to the boundary $\partial (B^j\times B^i)$ gives the respective gluing maps as is shown in the second row. We have that  $X^{(m-1)} = \big(S^j\times \partial B^i\big) \cup \big(\{*\}\times B^i\big)$ and\footnote{Having arbitrary space in mind, the sign $*$ will denote its basepoint. In our case, it will always be the single $0$-cell of the space.} indeed the quotient map $\Psi_2$ (which identifies each $\{x\}\times \partial B^i$ into a point $(x,*)$) maps $X^{(m-1)}$ to $S^j\vee S^i$. The  crucial consequence is that the gluing map $\psi$ of the $m$-cell in $T$ is the composition of the gluing map $\psi_1$ of the $m$-cell in $X$ and the restriction $\psi_2$ of the quotient map $\Psi_2$ as above.
 
Another tool that we need is the \emph{Whitehead product} \cite[Chapter X, 7.2]{Hatcher} for which we need the explicit construction. Again, $\psi\:S^{m-1}\to S^j\vee S^{i}$ denotes the gluing map of the $m$-cell in $T=S^j\times S^{i}$. 
Then the homotopy class of the composition $S^{m-1}\stackrel{\psi}{\to}S^j\vee S^{i} \stackrel{\omega_1\vee\omega_2}{\longrightarrow}S^{n-1}$ defines the Whitehead product of arbitrary elements $[\omega_1]\in\pi_{j}(S^{n-1})$ and $[\omega_2]\in\pi_{i}(S^{n-1})$. We will use the bilinearity of the Whitehead product, especially, that the product of the trivial element $[\const]$ and arbitrary $[\omega]$ is trivial.   

Let us assume that $$h\:\underbrace{e^i\cup A}_{X^{(i)}\cup A}\to S^{n-1}$$ is an extension of $\varphi'$ on the unique $i$-cell $e^i$ of $X$ . The map $h$ can be extended to $X$ if and only if there is a nullhomotopy for the composition $h\psi_1\:S^{m-1}\to S^{n-1}$ where again $\psi_1\:S^{m-1}\to X^{(m-1)}=X^{(i)}\cup A$ is the gluing map of the $m$-cell of $X$. Roughly speaking, the key difficulty is that $\varphi'$ can be extended on $e^i$ in various essentially different ways (whenever $\pi_i(S^{n-1})$ is nontrivial). The key observation is that this choice does not influence the homotopy class of $h\psi$ and that it is always equal to the above chosen nontrivial element $[\nu]\in\pi_{m-1}(S^{n-1})$ up to a sign.

Towards that end, let us form an auxiliary map $h'\:e^{i}\cup A\to S^{n-1}$ that is constant on $\partial X$ and equal to $h$ on the unique $i$-cell of  $X$. We want to show first, that $h'\psi_1$ is homotopically trivial, and second, that $[h\psi_1]=[h'\psi_1]\pm\nu$. 
\begin{enumerate}
\item Since $h'$ is constant on $\{x\}\times (\partial B^i)$ for each $x\in S^j$, it factors through the corresponding quotient $S^j\vee S^i$ of $X^{(i)}$ as follows (in the following the factorization is preceded by $\psi_1$):
$$\xymatrixcolsep{1.5cm}\xymatrix{S^{m-1}\ar[r]^-{\psi_1}&X^{(m-1)}=X^{(i)}\ar[r]^-{\psi_2}&
S^j \vee S^i\ar[r]^-{\const\!\vee \omega}&S^{n-1}}$$
Here $\const$ denotes the constant map and $\omega$ is the map determined by $h$ (or equivalently by $h'$) on $\{*\}\times B^i$. Since by the above considerations $\psi_2\psi_1=\psi$, the composition $h'\psi_1$ is equal to  $(\const\!\vee \omega)\psi$---the representative of the Whitehead product of $[\const]$ and $[\omega]$. By the bilinearity of the Whitehead product, $h'\psi_1$ is  trivial.

\item The second claim---$[h\psi_1]=[h'\psi_1]\pm\nu$---follows from basic obstruction theory. This claim follows from the  fact, that for any pair of maps $h$ and $h'$ that agree on $X^{(m-2)}$, the difference of their obstruction cocycles $z^m_h-z^m_{h'}\in Z^m\big(X,\pi_{m-1}(S^{n-1})\big)$ equals the coboundary of the difference cochain $d_{h,h'}\in C^{m-1}\big(X,\pi_{m-1}(S^{n-1} )\big)$. To get the conclusion, we need two ingredients: that the coboundary map is an isomorphism and that the difference cochain is nontrivial, namely, that it assigns $\nu$ to the $ (m-1)$-cell of $X$. The first ingredient was already shown in the first paragraph of this proof. 
Since the cellular chain structure of $X$ is rather simple---having one generator in both dimensions $m$ and $m-1$---we rephrase everything in an elementary language below.

The first ingredient is that the degree $d$ of the composition $$S^{m-1}\stackrel{\psi_1}{\to} X^{(m-1)}\stackrel{}{\to}X^{(m-1)}/(S^j\vee B^i)\cong S^{m-1}$$ is equal to $\pm 1$. The second ingredient is that, once we denote  the characteristic map
of the $(m-1)$-cell of $X$ by $\Phi$, the \emph{difference map} of $h'\Phi$ and $h\Phi$ equals $\pm\nu$. The difference map of any given maps $f\:B^{m-1}\to S^{n-1}$ and $g\:B^{m-1}\to S^{n-1}$ with $f|_{\partial B^{m-1}}=g|_{\partial B^{m-1}}$ is defined as  $\delta_{f,g}:=f\cup_{\partial B^{m-1}} g\:S^{m-1}\to S^{n-1}$. In words, $f$ defines $\delta_{f,g}$ on the northern hemisphere and $g$ defines $\delta_{f,g}$ on the southern hemisphere. Because there are factorizations $h\Phi=\varphi'\Phi=\nu q$  and\footnote{We have that $h\Phi=h|_A\Phi$ and $h|_A=\varphi'$ and we remind that the equality $\varphi'\Phi=\nu q$, where $q$ is the quotient map $B^{m-1}\to B^{m-1}/(\partial B^{m-1})$, was required in the definition of $\varphi'$} $h'\Phi=\const=\nu\const$ through $S^{m-1}$, we have that $\delta_{h\Phi, h'\Phi}= \nu\delta_{q,\const}$. Obviously, the map $\delta_{q,\const}$ has degree $\pm 1$ and thus the second ingredient holds.

By the definition of the addition in $\pi_{m-1}(S^{n-1})$, we have that $[h\psi_1]=[h'\psi]\pm d[\delta_{h\Phi,h'\Phi}]$. 
\end{enumerate}
This concludes the proof of Lemma~\ref{l:nonext} and thus the proof of Theorem~\ref{t:torus} as well.
\end{proof}

\section{Proof of Theorem~\ref{t:Poincare}}\label{s:Poincare}
\heading{Overview.} The proof of Theorem~\ref{t:Poincare} will be divided into several steps. 
Theorem~\ref{t:cap} implies one inclusion and for the other, it is sufficient to find a smooth $r$-perturbation
$g$ of $f$ such that $0$ is a regular value of $g$ and the homology image of $g^{-1}(0)$ in $X$ generates the cap product $\phi_f\frown H_m(X,\partial X)$.
First we show that the general case easily reduces to the case where $X$ is connected.
In the next step, we describe the $(m-n)$th homology of zero sets of perturbations transverse to zero: we prove that if $0$ is a regular value of a strict $r$-perturbation $g$ of $f$, then the Poincar\'e dual of $\phi_f$ equals the image of the fundamental class of the submanifold $g^{-1}(0)$ (Lemma~\ref{l:Poincare}). 
If $g^{-1}(0)$ is connected, then the fundamental class of $g^{-1}(0)$ generate its top homology $H_{m-n}(g^{-1}(0))$.
In this way, we reduce the proof of~Theorem~\ref{t:Poincare} 
to the statement that if $n+1\leq \dim X\leq 2n-3$ and $X$ is connected, then there exists some smooth strict $r$-perturbation $g$ of $f$ 
such that $0$ is a regular value of $g$ and $g^{-1}(0)$ is connected.
To prove this, we introduce the notion of framed submanifolds and show that if a framed $(m-n+1)$-submanifold $W$
has framed boundary consisting of $S\sqcup f^{-1}(0)$, then $S$ is the zero set of some smooth strict $r$-perturbation $g$ having $0$ as regular value (Lemma~\ref{l:framing}).
Finally, we show that there exists a framed submanifold $W$ and a \emph{connected} framed $(m-n)$-submanifold
$S$ of $X$  s.t. $\partial W=f^{-1}(0)\sqcup S$ (Lemma~\ref{l:connected_sum}). 
\heading{Reduction to the case of connected $X$}.
Assume that Theorem~\ref{t:cap} holds for $X$ connected. The compact space $X$ can only contain finitely many
connected components, say $X_1,\ldots, X_k$. Then $H_m(X, \partial X)\simeq \sum_i H_m(X_i, \partial X_i)$
and $\phi_f\frown H_m(X, \partial X)\simeq \sum_j \iota_j^*\phi_f\frown H_m(X_j, \partial X_j)$ where $\iota_j: X_j\hookrightarrow X$
is the inclusion. If we assume that Theorem~\ref{t:Poincare}
holds for $X$ connected, we may use it for $f|_{X_i}: X_i\to\R^n$ and get that
\begin{equation}
\label{e:components}
\bigcap_{g|_{X_i}:\,\|g|_{X_i}-f|_{X_i}\|\leq r} \mathrm{Im}\big(H_{m-n}(g|_{X_i}^{-1}(0))
\stackrel{i_*}{\longrightarrow} H_{m-n}(X_i) 
\big) 
\end{equation}
is contained in $\iota_i^*\phi_f\frown H_{m}(X_i, \partial X_i)$ for all $i$.
However, each $r$-perturbation $g$ of $f$ induces $r$-perturbations $g|_{X_i}$ of $f|_{X_i}$ and 
each set of $r$-perturbation $g_i$ of $f|_{X_i}$ induces an $r$-perturbation $g$ of $f$; therefore
$$\bigcap_{g:\,\|g-f\|\leq r} \mathrm{Im}\big(H_{m-n}(g^{-1}(0))
\stackrel{i_*}{\longrightarrow} H_{m-n}(X) 
\big)
$$
is isomorphic to the direct sum of (\ref{e:components}) over $i$ and is therefore 
a subset of $\sum_j \iota_j^*\phi_f\frown H_{m}(X_j, \partial X_j)\simeq \phi_f\frown H_m(X,\partial X)$.

In the rest of the proof, we will assume that $X$ is connected.

\heading{Poincar\'e dual of the fundamental class.} Now we will show that the Poincar\'e dual of the first obstruction equals the image of the fundamental class of the zero set of a smooth strict $r$-perturbation transverse to $0$. 
\begin{lemma}
\label{l:Poincare}
Let $X$ be a smooth oriented $m$-manifold with boundary, $A$ and $B$ be $(m-1)$-submanifolds of $\partial X$, 
$\partial X=A\cup B$, $\partial A=\partial B$, $f: (X,A)\to (\R^n, \R^{n}\setminus\{0\})$ be smooth with $0$ 
a regular value of $f$ and $f|_{\partial X}$, $[X]\in H_m(X,\partial X)$ the fundamental class of $X$, $\phi_f=f^*(\xi)\in H^n(X,A)$ 
the first obstruction and $\phi_f\frown [X]$ its Poincar\'e dual. 

Then the smooth submanifold $f^{-1}(0)$ of $X$ can be endowed with an orientation such that its fundamental class 
$[f^{-1}(0)]\in H_{m-n}(f^{-1}(0), B)$ satisfies
$$
i_*([f^{-1}(0)]) = \phi_f\frown [X]
$$
where $i: f^{-1}(0)\hookrightarrow X$ is the inclusion.
\end{lemma}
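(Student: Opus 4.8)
The plan is to compare the Poincaré duality isomorphism on $X$ with the Poincaré duality isomorphism on a tubular neighbourhood of $f^{-1}(0)$, using the fact that the first obstruction $\phi_f = f^*(\xi)$ is, by construction, supported near $f^{-1}(0)$. First I would set $Z := f^{-1}(0)$ and recall that, since $0$ is a regular value of $f$ and of $f|_{\partial X}$, $Z$ is a smooth properly embedded $(m-n)$-submanifold of $X$ with $\partial Z = Z\cap B$ (no intersection with $A$, since $f\neq 0$ on $A$). The normal bundle $\nu$ of $Z$ in $X$ is trivialized by $df$: indeed $f$ restricted to a tubular neighbourhood $N$ of $Z$ is, up to homotopy of pairs, the projection $N \to \R^n$ of the trivialized normal bundle, so $f|_N$ represents a Thom class. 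This is where the orientation on $Z$ comes from: the orientation of $X$ together with the trivialization of $\nu$ (i.e. the pullback orientation of $\R^n$) orients $Z$, and this is precisely the orientation making the fundamental class $[Z]\in H_{m-n}(Z, \partial Z) = H_{m-n}(Z,B)$ well defined.

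Next I would make the Thom-class statement precise. Let $N$ be a closed tubular neighbourhood of $Z$, with $\partial N = (\partial_0 N) \cup (N\cap B)$ where $\partial_0 N$ is the sphere-bundle part. The map $f$ sends $(N, \partial_0 N)$ to $(\R^n, \R^n\setminus\{0\})$, and the class $u := (f|_N)^*(\xi) \in H^n(N, \partial_0 N)$ is the Thom class of the oriented normal bundle $\nu$. The Thom isomorphism, which on the level of the fundamental class is exactly cap product with $[N, \partial N]$, gives $u \frown [N, \partial N] = [Z]$ in $H_{m-n}(N, N\cap B) \cong H_{m-n}(Z, B)$ (the iso induced by the deformation retraction $N \to Z$). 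This is the local computation; it is standard (e.g.\ \cite[Ch.~6]{prasolov} or the Thom-isomorphism discussion in \cite{Hatcher}) but needs care with the relative terms involving $B$.

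Finally I would glue the local picture to the global one via naturality of the cap product. The key point is that $\phi_f \in H^n(X,A)$ restricts, under $H^n(X,A) \to H^n(X, X\setminus \mathring N)$ and excision $H^n(X, X\setminus \mathring N) \cong H^n(N, \partial_0 N)$, to the class $u$ above — because $f$ maps $X\setminus \mathring N$ into $\R^n\setminus\{0\}$, so $\phi_f$ "lives on $N$", and on $N$ it is by definition $(f|_N)^*(\xi) = u$. Now the fundamental class $[X]\in H_m(X,\partial X)$ maps under the connecting/restriction maps of the triple to $[N, \partial N]\in H_m(N,\partial N)$ (up to the identification via excision; this uses that $X\setminus \mathring N$ and $N$ together cover $X$, and orientations are compatible). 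Naturality of $\frown$ then yields
$$
\phi_f \frown [X] \;=\; u \frown [N,\partial N] \;=\; [Z] \quad\text{in } H_{m-n}(X,B),
$$
where the first equality is the naturality chase through the excision isomorphisms and the second is the Thom-isomorphism computation; composing with $i_*: H_{m-n}(Z,B)\to H_{m-n}(X,B)$ identifies the right-hand side with $i_*([Z])$, which is the claim.

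The main obstacle I expect is bookkeeping the relative terms carefully: one is simultaneously keeping track of $A$ (where $f\neq 0$, so $\phi_f$ is supported away from it), of $B$ (which meets $Z$ in $\partial Z$ and which appears in all the homology groups), and of the sphere-bundle boundary $\partial_0 N$ of the tubular neighbourhood, and one must check that all the excision and Thom isomorphisms are compatible with these — in particular that $\partial A = \partial B$ and the regularity of $f|_{\partial X}$ guarantee $N$ can be chosen so that $\partial_0 N$, $N\cap A = \emptyset$, and $N\cap B$ fit together into a manifold-with-corners whose fundamental class behaves correctly. Orientation sign conventions in the Thom isomorphism and in Poincaré–Lefschetz duality are the other place where care is needed, but they only affect the choice of orientation on $Z$, which the statement leaves us free to make.
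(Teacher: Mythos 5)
Your proposal is correct in substance but takes a genuinely different route from the paper. You identify $\phi_f$ with the Thom class $u$ of the (trivialized) normal bundle of $Z=f^{-1}(0)$, localize the cap product to a tubular neighborhood $N$ via excision, and invoke the Thom isomorphism $u\frown[N,\partial N]=[Z]$ --- the standard ``Poincar\'e dual of a submanifold is its Thom class'' argument. The paper instead proceeds by induction on $n$: it writes $f=(f_1,f_2)$ with $f_1$ scalar-valued, uses the cohomology cross product to factor $\phi_f=\phi_1\smile\phi_2$, applies the inductive hypothesis first to the $\R^{n-1}$-valued $f_2$ and then to $f_1$ restricted to $f_2^{-1}(0)$, and glues by naturality of the cap product; the base case $n=1$ is an explicit simplicial computation after a suitable vertex ordering. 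Your route is the more conceptual and textbook one, but it concentrates all the difficulty into the manifold-with-corners bookkeeping near $\partial Z=Z\cap B$, which you flag honestly but do not resolve: one must verify that the excision $(N,\partial N)\hookrightarrow\bigl(X,(X\setminus\mathring N)\cup B\bigr)$ and the relative Thom isomorphism are compatible where the sphere-bundle boundary $\partial_0 N$ meets $N\cap B$ in a corner, and that $[X]$ indeed restricts to $[N,\partial N]$ there. The paper's induction sidesteps tubular neighborhoods and corners entirely --- every intermediate object stays an honest submanifold of $X$ and a subcomplex of one fixed triangulation --- at the price of a somewhat ad hoc base case. One small slip in your write-up: you describe $H^n(X,A)\to H^n(X,X\setminus\mathring N)$ as a restriction carrying $\phi_f$ to $u$, but since $A\subseteq X\setminus\mathring N$ the inclusion of pairs induces the map in the other direction, $H^n(X,X\setminus\mathring N)\to H^n(X,A)$; what you need is that $\phi_f$ lies in the \emph{image} of this map, the preimage (after excision) being the Thom class $u$.
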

It follows immediately that $\phi_f\frown [X]$ equals to the image of the fundamental class of any smooth $g$ such that
$\phi_g=\phi_f$: this happens, in particular, if $A=|f|^{-1}(r)$ and $g$ is a smooth strict $r$-perturbation of $f$ transverse to $0$.
\begin{proof}
If $0$ is a regular value of $f$ and $f|_{\partial X}$, then $f^{-1}(0)$ is a smooth $(m-n)$-dimensional submanifold of $X$
and $\partial f^{-1}(0)=f^{-1}(0)\cap B$ is a smooth submanifold of $B$; it also follows that 
the inclusion $i:f^{-1}(0)\hookrightarrow X$ induces a homology map that maps the fundamental class 
$[f^{-1}(0)]\in H_{m-n}(f^{-1}(0), \partial f^{-1}(0))$ to $H_{m-n}(X,B)$.
Smooth manifolds can be triangulated and a triangulation of $f^{-1}(0)\cap B$ can be 
extended to a triangulation of all $B$ and subsequently to $f^{-1}(0)$, $\partial X$ and $X$~\cite[Thm. 10.6.]{Munkres:1966}. 
In the rest, we will work with simplicial (co)homology and simplicial cap product, 
which for simplicial complexes coincides with our homology theory.
We will show that the Poincar\'e dual $\phi_f\frown [X]$ of the obstruction is the image of the fundamental class $[f^{-1}(0)]$
by induction on $n$.

Let $n=1$. For the use of simplicial homology, choose an ordering of all vertices such
that the vertices in $f^{-1}[-r, 0)$ have lower rank then vertices in $f^{-1}[0,r]$. The obstruction $\phi_f$
can be represented by a simplicial cocycle $z_f$ that assigns $1$ to each oriented 1-simplex $xy$ with $f(x)<0$ and $f(y)\geq 0$,
and $0$ to other oriented 1-simplices. The fundamental class $[X]\in H_m(X, \partial X)$ consists
of all $m$-simplices in $X$ and the cap product $z_f\frown(\sum_{\Delta_m\in X^{(m)}} \Delta_m)$ consists
of all $(m-1)$-simplices $\Delta_{m-1}=[y_0,y_1,\ldots, y_{m-1}]$ in $f^{-1}(0)$ such that $[x, y_0, \ldots, y_{m-1}]$ is an $m$-simplex in $X$, $f(x)<0$ and
$f(y_j)=0$ for all $j$. The set of all such $\Delta_{m-1}$ yields a triangulation of $f^{-1}(0)$ with the orientation induced 
from the chosen orientation of $X$. It follows that $\phi_f\frown [X]$ equals the image of the fundamental class of 
$[f^{-1}(0)]$ in $H_{m-1}(X,B)$.

Let $n>1$ and $f=(f_1, f_2)$ with $f_1$ scalar valued and $f_2: X\to\R^{n-1}$. Each $x\in f^{-1}(0)$
is a regular point of $f$ and $f|_{\partial X}$, hence it is a regular point of both $f_1$, $f_1|_{\partial X}$ and $f_2$, $f_2|_{\partial X}$.
It follows that there exists a neighborhood $U$ of $f^{-1}(0)$ s.t. $0$ is a regular value of both $f_1|_U$, $f_1|_{U\cap\partial X}$ 
and $f_2|_U$, $f_2|_{U\cap\partial X}$.
Possibly changing $f_1$ and $f_2$ outside $U$ without changing $f^{-1}(0)=f_1^{-1}(0)\cap f_2^{-1}(0)$, we may
assume that $0$ is a regular value of both $f_1$, $f_1|_{\partial X}$ and $f_2$, $f_2|_{\partial X}$, 
so that $f_1^{-1}(0)$ and $f_2^{-1}(0)$
are smooth manifolds of dimensions $m-1$ and $m-n+1$, respectively.
Choose a compact $(m-1)$-submanifold $A_1$ of $A$ such that $0\notin f_1(A_1)$ and so that $A_2:=\overline{A\setminus A_1}$ 
satisfies $0\notin f_2(A_2)$. Both $A_2$ and $B\cup A_1=\overline{\partial X\setminus A_2}$ are smooth $m-1$-dimensional submanifolds
of $\partial X$, $A_2\cup (B\cup A_1)=\partial X$ and $\partial A_2=\partial (B\cup A_1)$.

The maps $f, f_1$ and $f_2$ can be considered as maps of pairs $f: (X,A)\to (\R^n, \R^{n}\setminus\{0\})$, 
$f_1: (X, A_1)\to (\R, \R\setminus\{0\})$
and $f_2: (X, A_2): (\R^{n-1}, \R^{n-1}\setminus\{0\})$. 
Let $\xi^n, \xi^1$, resp. $\xi^{n-1}$ be fundamental classes of $H^j(\R^j, \R^j\setminus\{0\})$ where $j$ equals $n$, $1$, resp. $n-1$;
here we assume a canonical orientation on $\R^j$.
Let $\phi_1:=f_1^*(\xi^1)\in H^1(X,A_1)$ and $\phi_2:=f_2^*(\xi^{n-1})\in H^{n-1}(X, A_2)$ be the corresponding obstructions.
The cross product in cohomology~\cite[p. 214]{Hatcher}
\begin{align*}
H^1(\R, \R\setminus\{0\})\times H^{n-1}(\R^{n-1}, \R^{n-1}\setminus\{0\}) \stackrel{\times}{\longrightarrow} 
H^n(\R^n, \R^n\setminus\{0\})
\end{align*}
takes $(\xi^1, \xi^{n-1})$ to $\xi^n$. Using this we obtain
$$
f^*(\xi^n)=
f^*(\xi^1\times \xi^{n-1})=(p_1 f)^* (\xi^1) \smile (p_2 f)^*(\xi^{n-1})=f_1^*(\xi^1)\smile f_2^*(\xi^{n-1})
$$
for $p_1$ and $p_2$ the projections in $B^n$ to the first, resp. the remaining components.
Comparing the left and right hand side of the last equation yields $\phi_f=f^*(\xi^n)=f_1^*(\xi_1)\smile f_2^*(\xi^{n-1})=\phi_1\smile \phi_2$.

Now we use the induction hypothesis for the $\R^{n-1}$-valued map $f_2$ and the 
subcomplexes $A_2$ and $B\cup A_1$ of $X$.
It says that $\phi_2\frown[X]$ is the image of the fundamental class $[f_2^{-1}(0)]\in H_{m-n+1}(f_2^{-1}(0), B\cup A_1)$ in
$H_{m-n+1}(X, B\cup A_1)$.

The naturality of the cap product yields the following scheme:
\begin{equation}
\label{e:diagram}
\begin{diagram}
H^1(X,A_1) & \times     & H_{m-n+1}(X, B\cup A_1) & \rTo^{\frown} & H_{m-n}(X,B) \\
\dTo^{i^*} &            & \uTo^{i_*}              &               & \uTo^{i_*} \\
H^1(f_2^{-1}(0), A_1) 
           & \,\,\times\,\,     & H_{m-n+1}(f_2^{-1}(0), B\cup A_1) 
                                                  & \rTo^{\frown} & H_{m-n}(f_2^{-1}(0),B). \\
           &            &                         &               & \uTo^{i_*} \\
           &            &                         &               &  H_{m-n}(f^{-1}(0),B) \\
\end{diagram}
\end{equation}
The pullback $\tilde{\phi}_1:=i^*\phi_1\in H^1(f_2^{-1}(0), A_1)$ is the obstruction associated to the restriction of $f_1$
to $f_2^{-1}(0)$.  The restrictions $f_1|_{f_2^{-1}(0)}$ as well as $f_1|_{\partial f_2^{-1}(0)}$ have $0$ as regular value, so using
again the induction hypothesis, we get that $\tilde{\phi}_1\frown [f_2^{-1}(0)]$ is the inclusion-induced image of 
$[f^{-1}(0)]\in H_{m-n}(f^{-1}(0), B)$ in $H_{m-n}(f_2^{-1}(0), B)$. Using
the commutativity of diagram (\ref{e:diagram}), we get that the inclusion-induced image of $[f^{-1}(0)]$
in $H_{m-n}(X,B)$ equals 
$$\phi_1\frown (\phi_2\frown [X])=(\phi_1\smile\phi_2) \frown [X]=\phi_f\frown [X]$$ 
which completes the proof.
\end{proof}
In the rest of this section, we will only need the last lemma for the case where $A=\partial X$ and $B=\emptyset$.

\heading{Reduction to the existence of a perturbation $g$ such that $g^{-1}(0)$ is connected.} Assume that $(X,A)=|f|^{-1}[0,r], |f|^{-1}(r)$,
$X$ is a smooth connected manifold, $A=\partial X$, $g$ is smooth, $0$ is a regular value of $g$, $\|g-f\|<r$, and $g^{-1}(0)$ is connected. 
The constraint $\|g-f\|<r$ immediately implies that $f$ and $g$ are homotopic as maps $A\to\R^n\setminus\{0\}$ and $\phi_f=\phi_g$.
As $X$ is connected, the group $H_{m}(X,\partial X)$ is generated by the fundamental
class of $X$ and we already know by Lemma~\ref{l:Poincare} that $\phi_f\frown [X]=\phi_g\frown [X]$ equals the image of the fundamental class of $g^{-1}(0)$
in $H_{m-n}(X)$.
But if the manifold $g^{-1}(0)$ is connected, then $H_{m-n}(g^{-1}(0))$ 
is generated by the fundamental class of $g^{-1}(0)$, so its image in $H_{m-n}(X)$ is generated by $\phi_f\frown [X]$.
It follows that $U_{m-n}(f,r)$, being the intersection over all $r$-perturbations, 
cannot contain anything else than multiples of $\phi_f\frown [X]$, and we obtain the desired inclusion
$$
\phi_f\frown H_{m-n}(X,\partial X)\supseteq U_{m-n}(f,r).
$$

So, it remains to prove that if $n+1\leq\dim X\leq 2n-3$, $X$ is connected and $A=\partial X$, 
then there exists a smooth strict $r$-perturbation $g$ of $f$ transverse to $0$ such that $g^{-1}(0)$ connected. 
To show this, we need to introduce additional concepts from differential topology.

\heading{Framed submanifolds.} Assume that $X$ is a smooth $m$-manifold endowed with a Riemannian metric; 
the results
will be independent on the choice of the metric. Let $S\subseteq X$ be a smooth $(m-n)$-submanifold contained in the
interior of $X$; for each $x\in S$, the tangent space $T_x X$ decomposes as a direct sum of the tangent space $T_x S$ 
and the normal space $N_x S$. A framing on $S$ is a trivialization of the normal bundle $NS$, 
in other words, a smooth mapping $T$ such that for each $x\in S$, $T(x)=(T_1(x),\ldots, T_n(x))$ 
is a basis of the normal space $N_x S$.

If $f: X\to\R^n$ has $0$ a regular value, then $f^{-1}(0)$ is naturally a framed $(m-n)-$submanifold, 
$T_i(x)$ being the unique vector
in $N_x f^{-1}(0)$ mapped by $df$ to $e_i\in \R^n$. We will denote these vectors by $f^*(e_i)$.
Assume that $W$ is a framed $(m-n+1)$-submanifold of $X$ with framing
$T_2(x),\ldots, T_{n}(x)$ and that $\partial W$ is the boundary of $W$.
The existence of collars implies that some neighborhood of $\partial W$ in $W$
is diffeomorphic to $\partial W\times [0,1)$ with coordinates $(w,t)$.
The framing of $W$ \emph{induces} a framing of its boundary, given by 
$(T_1(x),\ldots, T_n(x))$ where $T_1(x)$ is the vector $\partial_t$
in the ``inwards'' direction and $(T_2(x),\ldots, T_n(x))$ the framing of $W$ in $x\in\partial W$.
\begin{lemma}
\label{l:framing}
Let $X$ be a smooth $m$-manifold, $r>0$, $f: X\to\R^n$ be smooth with $0$ a regular value of $f$, $A=\partial X=|f|^{-1}(r)$, 
and $\dim X\leq 2n-3$.  Let $S\subseteq X$ be a framed boundary-free $(m-n)$-submanifold of $X$ disjoint from $A$ 
and assume that there exists a framed $(m-n+1)$-submanifold $W\subseteq X$ disjoint from $A$ so that
$\partial W=f^{-1}(0)\sqcup S$ and $W$ induces the framing of $f^{-1}(0)\sqcup S$.

Then there exists a smooth $g$ so that $\|g-f\|<r$, $0$ is a regular value of $g$ and $g^{-1}(0)=S$.
\end{lemma}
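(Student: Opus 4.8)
The statement is the manifestation, inside the perturbation framework, of the Pontryagin--Thom principle that the zero set of $f$ may be freely moved within its framed bordism class; my plan is to separate that topological content from the analytic estimate.

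\textbf{Reduction to an extension problem.} The first move is to dispose of the inequality $\|g-f\|<r$. By Lemma~\ref{l:ext-pert}\,(\ref{n:one}), an \emph{extension} $e\colon X\to\R^{n}$ of $f|_{A}$ automatically produces a strict $r$-perturbation $g$ of $f$ with $g^{-1}(0)=e^{-1}(0)$; so it suffices to construct a smooth extension $e\colon X\to\R^{n}$ of $f|_{A}$ with $e^{-1}(0)=S$ and $0$ a regular value of $e$. The resulting $g$ can then be made transverse to $0$ by a final $C^{\infty}$-small perturbation supported near $S$, which alters neither the zero set nor the strict inequality (the latter survives because $\sup_{X}|g-f|$ is attained and is $<r$). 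In particular, the modification carrying $f$ to $e$ may be performed entirely in the interior of $X$, away from $A$, under no norm constraint whatsoever.

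\textbf{Construction of $e$ by collapsing along $W$.} I would first enlarge $W$ to a framed $(m-n+1)$-submanifold $W^{+}\subseteq X\setminus A$ whose interior contains $f^{-1}(0)$ and $S$ (push the collars slightly outward, extend the framing $(T_{2},\dots,T_{n})$), and fix a tubular neighbourhood $N\cong W^{+}\times B^{\,n-1}$ with the identification coming from the framing, so that the $B^{\,n-1}$-coordinate is a submersion vanishing exactly on $W^{+}$. Elementary Morse theory on the cobordism supplies $\tau\colon W^{+}\to\R$ with $\tau^{-1}(-1)=f^{-1}(0)$, $\tau^{-1}(0)=S$, regular values $-1$ and $0$, and all critical values in $(-1,0)$. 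On $N$ set $e(w,v):=\bigl(\tau(w),v\bigr)\in\R\times\R^{n-1}=\R^{n}$ (suitably rescaled); then $e^{-1}(0)\cap N=\tau^{-1}(0)\times\{0\}=S$ and $de$ is surjective there since $d\tau\neq0$ on $\tau^{-1}(0)$. What makes this compatible with $f$ near $f^{-1}(0)$ is the pair of hypotheses ``$0$ is a regular value of $f$'' and ``$W$ induces on $f^{-1}(0)$ the framing $f^{*}(e_{i})$'': together they force $f$, after a diffeomorphism of a tube around $f^{-1}(0)$, to be the same local model $(\tau(w)+1,v)$. It then remains to extend $e$ over $X\setminus N$ so that it coincides with $f$ outside a neighbourhood of $N$ --- that is, to homotope $f$, near $\partial N$, to the local model through $(\R^{n}\setminus\{0\})$-valued maps, relative to the region where the two already agree.

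\textbf{The main obstacle.} That gluing is the heart of the proof. Both $f|_{\partial N}$ and the model are maps from the $(m-1)$-complex $\partial N$ into $\R^{n}\setminus\{0\}\simeq S^{n-1}$ agreeing over the ``$f^{-1}(0)$-end'' of $W^{+}$, and the difference of their relative homotopy classes is exactly the obstruction to building $e$; the cobordism $W$ is precisely what kills it. The cleanest route I see is to pass to the graph $\widetilde W:=\{(x,\tau(x)):x\in W\}\subseteq X\times[0,1]$, which is an honest framed codimension-$n$ cobordism in $X\times I$ from $\bigl(f^{-1}(0),f^{*}(e_{i})\bigr)$ to $(S,\textrm{given framing})$ --- the induced-framing hypothesis is used here once more --- and to feed it into the Pontryagin--Thom construction. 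This comparison is only legitimate in the metastable range: the bound $\dim X\le 2n-3$ keeps every relevant coefficient group $\pi_{k}(S^{n-1})$, for $n-1\le k\le m-1\le 2n-4$, inside the stable range, which is what lets the obstruction-theoretic computation close up and be controlled by $W$. The critical points of $\tau$ cause no trouble: as $t$ runs from $-1$ to $0$ the intermediate ``zero set'' $\tau^{-1}(t)$ changes by surgery, but only the terminal regular level $\tau^{-1}(0)=S$ governs $e^{-1}(0)$ and its transversality.
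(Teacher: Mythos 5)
Your reduction to an extension problem via Lemma~\ref{l:ext-pert}(\ref{n:one}) and the local model $e(w,v)=(\tau(w),v)$ on a tube $N\cong W^{+}\times B^{n-1}$ are sound and parallel the paper's Steps~1 and~5. But the step you yourself call ``the heart of the proof''---homotoping $f$ near $\partial N$ into the model, relative to the region where they already agree---is not carried out; it is gestured at via ``feed it into the Pontryagin--Thom construction.'' You do not verify that the graph $\widetilde W\subseteq X\times I$ carries an $n$-frame restricting at the two ends to $(f^{*}(e_i))$ and to the given framing on $S$ (exactly the bookkeeping where the ``$W$ induces the framing'' hypothesis must be spent), and you do not explain how the resulting homotopy in $[X/A,S^{n}]$ actually solves the \emph{relative} extension problem along $\partial N$ rather than merely producing some map of pairs homotopic to $f$. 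The paper closes this gap by a hands-on construction: it perturbs $f$ to $\tilde f$ so that the ray preimage $\tilde f^{-1}(\R_{0}^{-}e_{1})$ is a framed $(m-n+1)$-manifold meeting $W$ only along $f^{-1}(0)$, glues $W$ to that ray preimage into a framed $W'$ reaching $A$, builds $h$ from a tubular neighbourhood of (an extension of) $W'$, and checks $h|_{A}\simeq f|_{A}$ via Pontryagin's framing criterion applied on $A$. The glued manifold $W'$ reaching $A$ is precisely the object your sketch lacks: it turns the abstract ``cobordism in $X\times I$'' into a framed submanifold of $X$ whose intersection with $A$ lets the framing comparison be made where it matters.

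Your stated reason for the dimensional hypothesis is also wrong, and it suggests the obstruction-theoretic picture behind your sketch was never actually checked. You claim $m\le 2n-3$ keeps $\pi_{k}(S^{n-1})$, $k\le m-1\le 2n-4$, in the stable range ``which is what lets the obstruction-theoretic computation close up''; but the relative Pontryagin--Thom correspondence between framed cobordism classes and $[X/A,S^{n}]$ holds in all dimensions and needs no stability. In the paper the bound is used for a completely different, concrete purpose: general position, namely $\dim W+\dim \tilde f^{-1}(\R_{0}^{-}e_{1})=2(m-n+1)<m$ (equivalent to $m\le 2n-3$), which makes the two $(m-n+1)$-manifolds meet only along $f^{-1}(0)$ before gluing. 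Without that step there is no $W'$, no framed submanifold reaching $A$, and the homotopy $h|_A\simeq f|_A$ that your Step~1 requires is left unestablished.
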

We will see that $g$ can be even chosen so that the $S$-framing $(T_1(x),\ldots, T_n(x))$ 
satisfies $T_1(x)=-g^*(e_1)$ and $T_i(x)=g^*(e_i)$ for $i>1$.

\begin{proof}
\emph{Step 1: reduction to the existence of $h$ homotopic to $f$, $h^{-1}(0)=S$.}\\
We will construct a smooth map $h$ s.t. $h^{-1}(0)=S$ and $h/|h|$ will be homotopic to $f/|f|$ as maps from $A\to S^{n-1}$. 
This is sufficient, because then we might
easily change $h$ in a collar of $A$ diffeomorphic to $A\times [0,1]$ that is disjoint from $h^{-1}(0)$
to obtain a smooth extension $e: X\to\R^n$ of $f|_A$ that coincides with $h$ outside this neighborhood.
As we have seen in the proof of Lemma~\ref{l:perturb-ext}, some positive scalar multiple
$\chi(x)\,e(x)=:g(x)$ satisfies $\|g-f\|<r$. The map $\chi$ can be chosen to be smooth:
then $0$ is a regular value of $g$ and $g^{-1}(0)=h^{-1}(0)=S$.
In the rest of the proof, we will show how to construct $h$.
\\ \\
\emph{Step 2: constructing a perturbation $\tilde{f}$ of $f$.}\\
Let $(T_2,\ldots,T_n)$ be the framing on $W$, inducing the framing $(T_1,\ldots, T_n)$ on 
$f^{-1}(0)\sqcup S = \partial W$. On $f^{-1}(0)$, $T_i$ coincides with $f^*(e_i)$.
Let $B_\epsilon^n$ be a closed neighborhood of $0\in\R^n$ consisting of regular values of $f$ and 
let $L$ be the closed straight line segment connecting $0$ and $-\epsilon e_1\in\R^n$. Then the $f$-preimage of $L$ 
is an $(m-n+1)$-submanifold of $X$ with boundary $f^{-1}(\{0,-\epsilon\}\times \{0\})$,
with a framing $(f^*(e_2),\ldots, f^*(e_n))$, where $e_2,\ldots, e_n$ are the normal vectors to $L$ for each $x\in L$.
By making $\epsilon$ possibly smaller, we can assume that $f^{-1}(L)\cap W=f^{-1}(0)$, because in a small enough neighborhood
$U$ of $f^{-1}(0)$, $f_1$ is positive on $(W\setminus f^{-1}(0))\cap U$ (by definition, $(d f_1)(T_1(x))>0$ for $x\in f^{-1}(0)$)
and negative on $(f^{-1}(L)\setminus f^{-1}(0))\cap U$.
The vector field $T_1(x)$ for $x\in f^{-1}(0)$ is in the tangent space of both $W$ and $f^{-1}(L)$; it has the inwards direction
wrt. $W$ and outward wrt. $f^{-1}(L)$.

Let $V:=f^{-1}(B_\epsilon^n)$. The restriction $f|_V$ is transverse to the closed set 
$$\R_0^- e_1:=(-\infty, 0]\times\{0\}\subseteq\R^n$$ by construction.
Using a relative version of transversality theorem, the space of smooth functions that coincide with $f$ on $V$ and are transverse
to $\R_0^- e_1$ is dense and open in $\{g\in C^\infty(X,\R^n):\,g|_V=f|_V\}$ in Whitney $C^1$-topology 
(this follows from~\cite[Thm 19.1]{Abraham:1967}) so there exists an arbitrary small perturbation $\tilde{f}$ of $f$ that is smooth, transverse to $\R_0^- e_1$ and $\tilde{f}|_V=f|_V$.
Then $\tilde{f}^{-1}(\R_0^- e_1)$ is a smooth $(m-n+1)$-submanifold of $X$ with boundary
$f^{-1}(0)=\tilde{f}^{-1}(0)$. 

The assumption $m\leq 2n-3$ implies that 
$\dim W+\dim \tilde{f}^{-1}(\R_0^- e_1)=2(m-n+1)<m$, so both $W$ and $\tilde{f}^{-1}(\R_0^- e_1)$ 
have dimension less than one half of $m=\dim X$.  Therefore, we can replace $\tilde{f}$ by another arbitrary small perturbation,
without changing it on $V$, assume that it is transverse to $\R_0^- e_1$ and moreover, $\tilde{f}(\R_0^- e_1)$
intersects $W$ only in $f^{-1}(0)$. 
Assume that $\tilde{f}$ is close enough to $f$ so that $f|_A$ is homotopic to $\tilde{f}|_{A}$ as maps from $A$ to $\R^n\setminus \{0\}$.
Without loss of generality, we may assume that $\tilde{f}^{-1}(\R_0^- e_1)$ intersects $A$ 
transversally (otherwise we replaced $\tilde{f}$ by another perturbation that differs from $\tilde{f}$ in a neighborhood of $A$)
and hence $\tilde{f}^{-1}(\R_0^- e_1)\cap A$ is an $(m-n)$ dimensional submanifold of $A$.

The submanifold $\tilde{f}^{-1}(\R_0^- e_1)$ is endowed with a framing $(\tilde{f}^*(e_2), \ldots \tilde{f}^*(e_n))$
where $e_2,\ldots, e_n$ are vectors of the canonical basis in $T_{(y,0)} \R^n$ for $y\leq 0$. This framed manifold
intersects $W$ in $f^{-1}(0)$, the tangent spaces of both manifolds coincide in $f^{-1}(0)$, $T_1$ directs inwards wrt. $W$
and outwards wrt. $\tilde{f}^{-1}(\R_0^- e_1)$, and the framing on both submanifolds coincide in $f^{-1}(0)$.
\\ \\
\begin{figure}
\begin{center}
\includegraphics[scale=1.1]{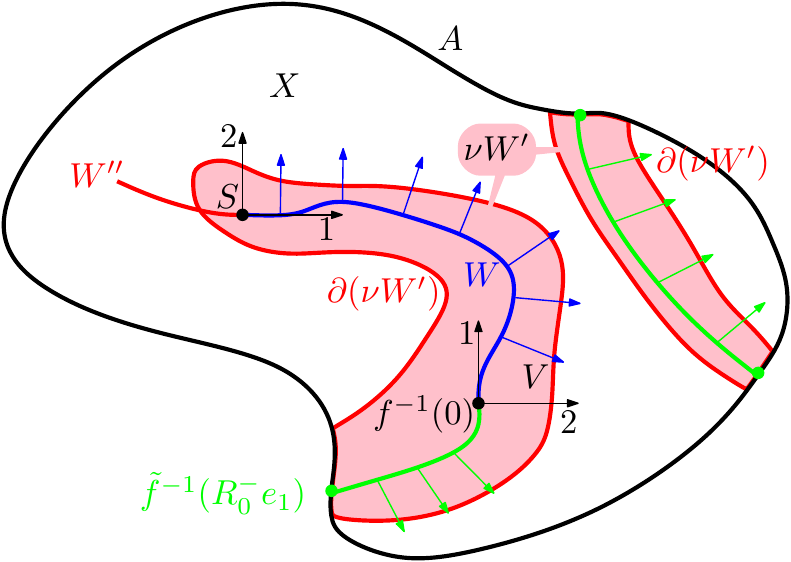}
\end{center}
\end{figure}
\emph{Step 3: gluing $W$ and $\tilde{f}^{-1}(\R_0^-\,e_1)$ to one smooth submanifold.}\\
Both submanifolds $W$ and $\tilde{f}^{-1}(\R_0^- e_1)$ of $X$ intersect in their common boundary $f^{-1}(0)$ and both 
the tangent spaces and framings coincide in $f^{-1}(0)$.
We would like to smoothly ``glue'' them to one framed manifold 
$W\cup \tilde{f}^{-1}(\R_0^- e_1)$ but unfortunately, such union does not need to yield a \emph{smooth} submanifold in general.

We claim that there exists a smooth framed manifold $W'$ that coincides
with $W\cup \tilde{f}^{-1}(\R_0^- e_1)$ everywhere except on a neighborhood of $f^{-1}(0)$ in $X$ that can be chosen to be arbitrary small. 
Choose a continuous tangent vector fields $v$ in $W\cup \tilde{f}^{-1}(\R_0^- e_1)$ that is smooth on $W$ 
and smooth on $\tilde{f}^{-1}(\R_0^- e_1)$,
such that $v|_{f^{-1}(0)}$ is nonzero and points inwards wrt. $W$ and outwards wrt. $\tilde{f}^{-1}(\R_0^- e_1)$
(it may coincide with $f^*(e_1)$).
The flow of this vector fields induces collar neighborhoods $C_1$ resp. $C_2$ of $f^{-1}(0)$ in $W$ resp. $\tilde{f}^{-1}(\R_0^- e_1)$ 
contained in $X$ diffeomorphic to $f^{-1}(0)\times [0,\epsilon)$, resp.  $f^{-1}(0)\times (-\epsilon,0]$ for some $\epsilon>0$.
Let as denote the embeddings $f^{-1}(0)\times [0,\epsilon)\to X$ and $f^{-1}(0)\times (-\epsilon, 0]\to X$ by $w_1$ and $w_2$, respectively.
Let $w: f^{-1}(0)\times (-\epsilon,\epsilon)\to X$ be defined by $w_1(x,t)$ for $t\geq 0$ and $w_2(x,t)$ for $t<0$:
this map is a $C^1$-embedding (the differentials $dw_1$ and $dw_2$ coincide on $f^{-1}(0)$) and is $C^\infty$ whenever $t\neq 0$.

Let $\psi_\alpha: U_\alpha\to\R^m$ be a collection of $X$-charts and let $\{V_\beta\}_\beta$ be an open covering of 
$f^{-1}(0)\times [-\epsilon/2, \epsilon/2]$
such that for each $V_\beta$, $w(\overline{V_{\beta}})$ is contained in some $U_{\alpha(\beta)}$. Further, let $V_\beta'\subseteq V_\beta$
be so that $\overline{V_\beta'}\subseteq V_\beta$ and $\{V_\beta'\}_\beta$ is still an open covering of $f^{-1}(0)\times [-\epsilon/2, \epsilon/2]$.
Let $\beta_1,\ldots, \beta_k$ be such that
$\cup_j V_{\beta_j}'$ is an open neighborhood of $f^{-1}(0)\times \{0\}$. 
The space $C^\infty(\overline{V_{\beta_1}}, \R^m)$ is dense in $C^1(\overline{V_{\beta_1}}, \R^m)$
with the Whitney $C^1$-topology \cite[Thm. 2.4]{Hirsch:76} so we may replace 
$\psi_{\alpha(\beta_1)}\circ w|_{\overline{V_{\beta_1}}}: \overline{V_{\beta_1}}\to\R^m$
by an arbitrary close map (in the Whitney topology) $\overline{V_{\beta_1}}\to\R^m$ that is smooth on $V_{\beta_1}'$ and unchanged in a neighborhood of 
$\partial V_{\beta_1}$. This defines a map $w_1': f^{-1}(0)\times [-\epsilon/2,\epsilon/2]\to X$ that is smooth on $V_{\beta_1}'$ 
and coincides with $w$ on $f^{-1}(0)\times \{\pm\epsilon/2\}$. On $w_1'(\overline{V_{\beta_1}})$, we can also use the $\psi_{\alpha(\beta_1)}$-chart to define 
a~framing that is smooth on $w_1(V_{\beta_1}')$ and coincides with the original framing on a neighborhood of $w_1'(\partial V_{\beta_1})$.
In the same way, we smoothen the function on $V_{\beta_2}',\ldots, V_{\beta_k}'$
and obtain a smooth map $w': f^{-1}(0)\times (-\epsilon, \epsilon)\to X$ arbitrary close to $w$ that coincides with $w$ on 
a neighborhood of $f^{-1}(0)\times \{\pm\epsilon\}$. If we chose $w'$ close enough to $w$, it is an embedding by~\cite[Thm 1.4]{Hirsch:76}.
The manifold $W'$ can now be defined as
$$
W':=\mathrm{Im}(w')\cup \big((W\cup \tilde{f}^{-1}(\R_0^- e_1))\setminus \mathrm{Im}(w')\big)
$$
which is a smooth embedded framed submanifold that coincides with $W\cup \tilde{f}^{-1}(\R_0^- e_1)$ 
except on a neighborhood of $f^{-1}(0)$ that can be chosen to be arbitrary small.  
The framing coincides with the framing on $W\cup \tilde{f}^{-1}(\R_0^- e_1)$ outside $\mathrm{Im}(w')$.
By construction, the boundary of $W'$ consists of $S$ and a submanifold $W'\cap A$ of $A$.
\\ \\
\emph{Step 4: choice of the metric.}\\
Let us choose a vector field $v$ on $A=\partial X$ in the inwards-direction so that for $x\in W'\cap A$, $v(x)\in T_x W'$. 
This can be extended to a nowhere zero vector field in a neighborhood of 
$A$ and used to define a collar neighborhood of $A$ diffeomorphic to $A\times [0,\epsilon)$ for some $\epsilon>0$, 
the diffeomorphism induced by the flow of $v$.  We endow $X$ with a new smooth Riemannian metric that is a product metric on this neighborhood.
Due to this choice of the metric, the geodesics in $A$ coincide with the geodesics in $X$. 
In what follow, we will assume that such metric has been defined and we identify the given framing of $W'$ and $S$ with normal vectors
wrt. this metric. In particular, $W'$ intersects $A$ orthogonally and the $W'$-framing vectors in $W'\cap A$ 
are all in the tangent space of $A$.
\\ \\
\emph{Step 5: construction of $h$.}\\
The first framing vector $T_1$ of the $S$-framing can be extended to a smooth tangent (wrt. $W$) vector field 
in a neighborhood of $S$ in $W$ and further to a neighborhood of $S$ in $X$. 
The flow of $-T_1$ then generates an external collar $C$ of $W'$ diffeomorphic to $S\times [0,\epsilon]$ for some $\epsilon>0$
such that $W'':=C\cup W'$ is a smooth submanifold of~$X$.  Using charts and partition of unity, we may easily extend the $W'$-framing
to a framing on $W''$.
Without loss of generality, we can assume that the external collar $C$ of $W$ is disjoint from $A$.
The flow of $-T_1$ induces a neighborhood $\nu(S)$ of $S$ in $W''$ diffeomorphic to $S\times [-\epsilon,\epsilon]$ with 
$S\times (-\epsilon,0]$ corresponding to a neighborhood of $S$ in $W'$ and $S\times [0,\epsilon)$ to the open external collar of $S$ 
contained in $W''$. The projection on $[-\epsilon, \epsilon]$ defines a smooth scalar valued map $h_1: \nu(S)\to\R$ s.t. 
$h_1^{-1}(0)=S$, $h_1\geq 0$ on $C$ and $h_1\leq 0$ on $\nu(S)\cap W'$: $(h_1)_*$ maps $T_1$ (which directs inwards to $W$) 
to $(-\partial_x)\in T_0\R$. 

By construction, $\tilde{f}_1$ is negative in $W''\cap A$.  
Let $U$ be a closed neighborhood of $A$ in $X$ such that $\tilde{f}_1$ is still negative on $U\cap W''$
and extend $h_1$ to a smooth map $W''\to\R$ such that $(h_1)|_{U\cap W''}=(\tilde{f}_1)|_{U\cap W''}$
and $h_1<0$ on $W'\setminus S$.

The geodesic flow of the $W''$-framing of $W''$ induces a diffeomorphism $\varphi$ of $W''\times B^{n-1}$ and some set $\nu(W'')\subseteq X$,
where $B^{n-1}$ is the closed ball in $\R^{n-1}$ of small enough diameter (due to our choice of the metric, framing vectors in $W'\cap A$
induce geodesics in $A$). 
This set $\nu(W'')$ is not open in $X$, but it contains an open neighborhood of $W'$.
The projection on $B^{n-1}$ defines a~smooth function $h': \nu(W'')\to\R^{n-1}$ transverse to $0$ 
such that $h'^{-1}(0)=W''$ and $h'$ induces the given framing on $W''$. 
Let us extend $h_1$ to a smooth scalar valued map $\nu (W'')\to\R$ arbitrarily and finally define $h: \nu (W'')\to\R^n$ by $h=(h_1, h')$. 
It is easy to see that $h^{-1}(0)=S$ and $0$ is a regular value of $h$.
Summarizing the construction, we have a (closed) neighborhood $\nu (W'')$ of $W'$ in $X$ and a smooth map $h: \nu (W'')\to\R^n$ such that
\begin{itemize}
\item $h^{-1}(0)=S$, the original framing of $S$ equals $(-h^*(e_1), h^*(e_2),\ldots h^*(e_n))$,
\item $(h_2,\ldots, h_n)^{-1}(0)=W''$ and $(h_2,\ldots, h_n)$ induces the original framing on $W'\subseteq W''$,
\item $W''\cap h^{-1}(\R_0^- e_1)=W'$,
\item $h|_{U\cap W''}=\tilde{f}|_{U\cap W''}$.
\end{itemize}
By construction, $h$ restricted to $\partial (\nu(W''))$  has values in $\R^n\setminus\{\R_0^-\,e_1\}$:
this is because $h(x)\in\R_0^- e_1$ implies $x\in W'$ and $W'$ is in the interior of $\nu (W')$. 
The topological space $\R^n\setminus(\R_0^- e_1)$ is homotopically trivial 
as it deformation retracts to a point, so $h|_{\partial\nu(W'')}$ can be extended to a continuous map 
$\overline{X\setminus \nu(W'')}\to\R^n\setminus\{\R_0^-\,e_1\}$ 
(for this, we need the homotopy extension property of $\overline{X\setminus \nu(W'')}$ and its closed subset $\partial \nu(W'')$)
which in turn defines an extension $h: X\to\R^n$ of the map $h|_{\nu(W'')}$ that we have already defined.
Possibly perturbating $h$ slightly outside of some neighborhood of $W''$, we may assume that it is smooth~\cite[Thm. 2.5]{Kosinski:2007}.
By construction, $0$ is a regular value of $h$.
\\ \\
\emph{Step 6: the restriction $h|_A$ is homotopic to $f|_A$.}\\
We will show that $H:=h/|h|$ and $F:=f/|f|$ are homotopic as maps from $A\to S^{n-1}$. 
Let $\tilde{F}$ be the restriction of $(\tilde{f}/|\tilde{f}|)$ to the open neighborhood $U$ of $A$.
We assumed that $f|_{A}$ is homotopic to $\tilde{f}|_{A}$ as maps $A\to\R^n\setminus\{0\}$,
so the sphere-valued maps $F|_{A}$ is homotopic to $\tilde{F}|_{A}$
and it remains to show that $\tilde{F}|_{A}$ is homotopic to $H|_{A}$.

By construction, $(\tilde{f}|_{U})^{-1}(\R_0^- e_1)=({h|_U})^{-1}(\R_0^- e_1)=U\cap W''$, both maps are nowhere zero on $U$,
they coincide on $U\cap W''$ and both maps induce the same framing on $U\cap W''$. It follows that 
$\tilde{F}^{-1}(-e_1)=(H|_U)^{-1}(-e_1)=U\cap W''$ and both $\tilde{F}$ and $H$ induce the same framing on $U\cap W''$ (it coincides
with the original framing of $U\cap W''$ up to scalar multiples of framing vectors): 
this framing restricts on $W''\cap A$ to a framing of the normal space to $W''\cap A$ in $A$. 
Therefore, for $x\in \tilde{F}^{-1}(-e_1)\cap A$, $\tilde{F}_*(x)=H_*(x)$ and consequently $(\tilde{F}|_{A})_*(x)=(H|_{A})_*(x)$.
It follows that $\tilde{F}|_{A}$ and $H|_{A}$ induce the same framing of
the normal bundle $N((\tilde{F}|_{A})^{-1}(-e_1)\cap A)$ in $A$ and by~\cite[Lemma 4, p. 48]{Milnor:97}, 
$\tilde{F}|_{A}\sim H|_{A}$ are homotopic.
\end{proof}

\heading{Connecting disconnected components.}
In this section, we show that if $S_1$ is a framed submanifold of $X$ with dimension at least 1 and codimension at least 3,
then there exists a framed submanifold
$W\subseteq X$ such that $\partial W=S_1\sqcup S_2$ where $S_2$ is connected. 
This will finish the proof of Theorem~\ref{t:Poincare}, because it follows that for the framed submanifold 
$S_1:=f^{-1}(0)$, we can construct a strict $r$-perturbation $g$ of $f$ s.t. $g^{-1}(0)=S_2$ is connected by Lemma~\ref{l:framing}.
The constraint $n+1\leq m\leq 2n-3$ that we assume in Theorem~\ref{t:Poincare} implies that the dimension of $f^{-1}(0)$
is at least $1$ and that $n\geq 4$, so all the dimensional 
assumptions of Lemma~\ref{l:connected_sum} and Lemma~\ref{l:framing} are satisfied.

\begin{lemma}
\label{l:connected_sum}
Let $X$ be a smooth connected manifold, $S_1$ a framed closed submanifold of $X$\footnote{That is, $S_1$ is compact and without boundary.} and assume that
$1 \leq \dim S_1\leq \dim X-3$.
Then there exists a framed submanifold $W$ in the interior of $X$ such that 
$\partial W=S_1\sqcup S_2$, $W$ induces the framing on $S_1\sqcup S_2$ and $S_2$ is connected.
\end{lemma}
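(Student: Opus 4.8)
The plan is to reduce to the case where $S_1$ is connected by performing a sequence of \emph{ambient framed connected sums} on the components of $S_1$, all realized at once as a single framed cobordism $W$. Write $m=\dim X$ and $k=\dim S_1$, so the hypotheses say $1\le k\le m-3$ and the normal bundle of $S_1$ has rank $m-k\ge 3$; denote by $T_1,\dots,T_{m-k}$ its framing vector fields, and assume (as in the intended application $S_1=f^{-1}(0)$) that $S_1\subseteq\operatorname{int}X$. If $S_1$ is already connected we take $W:=S_1\times[0,\varepsilon]$, embedded into a tubular neighbourhood of $S_1$ by pushing along $T_1$; this already satisfies $\partial W=S_1\sqcup S_2$ with $S_2\cong S_1$ connected and induces the prescribed framing. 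So assume $S_1$ has components $C_1,\dots,C_c$ with $c\ge 2$.

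\emph{Connecting data.} Fix a tree $\mathcal T$ on the vertex set $\{C_1,\dots,C_c\}$; it has $c-1$ edges. For the $j$-th edge, joining $C_{a_j}$ to $C_{b_j}$, choose an embedded arc $\delta_j\subseteq X$ from a point of $C_{a_j}$ to a point of $C_{b_j}$, meeting $S_1$ transversally at its two endpoints, with all chosen endpoints (over all edges) distinct; this uses $k\ge 1$ to have enough points on each component. Since $m\ge 4$ a generic such family consists of pairwise disjoint embedded arcs, and since $1+k<m$ a further generic perturbation rel endpoints places the interiors of the $\delta_j$ in $\operatorname{int}X\setminus S_1$ and away from $\partial X$.

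\emph{Construction of $W$.} Using the framing, identify a tubular neighbourhood of $S_1$ with $S_1\times\R^{m-k}$ and set $W_0:=\{(x,tT_1):x\in S_1,\ 0\le t\le\varepsilon\}\cong S_1\times[0,\varepsilon]$; its normal bundle in $X$ is trivialized by $T_2,\dots,T_{m-k}$, and the framing it induces on the boundary component $S_1\times\{0\}$ is exactly the given framing. For each $j$, attach to $W_0$ a handle $H_j\cong D^1\times D^k$, glued along small $k$-disk neighbourhoods in $S_1\times\{\varepsilon\}$ of (the pushed-up images of) the two endpoints of $\delta_j$: the $D^1$-core of $H_j$ is a copy of $\delta_j$ pushed slightly into the collar, and the $D^k$-directions are transported along $\delta_j$ from a $k$-disk tangent to $C_{a_j}$ to one tangent to $C_{b_j}$. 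This is possible because a tubular neighbourhood of $\delta_j$ is an $m$-ball, $m-1\ge k$ leaves room for the transported $D^k$, and the Grassmannian of $k$-planes in $\R^{m-1}$ is connected; shrinking the handles keeps them pairwise disjoint, disjoint from $W_0$ outside their attaching regions, and inside $\operatorname{int}X$. Since each $H_j$ is contractible and the normal trivialization is prescribed only on the attaching region $S^0\times D^k$, it extends over $H_j$ with no obstruction. Put $W:=W_0\cup H_1\cup\dots\cup H_{c-1}$ and round the corners. Then $W$ is a framed $(k+1)$-submanifold of $\operatorname{int}X$ with $\partial W=S_1\sqcup S_2$, where $S_2$ is $S_1$ with the $c-1$ ambient connected sums indexed by the edges of $\mathcal T$ carried out; as $\mathcal T$ spans all components, $S_2$ is connected, $W$ induces the given framing on $S_1$, and $S_2$ carries the framing induced by $W$ --- which is precisely the conclusion of Lemma~\ref{l:connected_sum}.

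\emph{Main difficulty.} The delicate point is the middle step: turning the ambient connected sum into an \emph{embedded, framed} cobordism. One must verify, by general-position arguments that genuinely use $1\le k\le m-3$, that each handle $H_j$ embeds so as to meet $S_1\times[0,\varepsilon]$ only along its attaching region and to be disjoint from the other handles, and that the normal trivialization of $W_0$ extends over the handles --- the latter being where the codimension bound $m-k\ge 3$ guarantees that no $\pi_0$ or $\pi_1$ obstruction of the structure group survives. Once $W$ is constructed, applying Lemma~\ref{l:framing} to $S:=S_2$ (with $f^{-1}(0)=S_1$) yields a smooth strict $r$-perturbation $g$ of $f$ with $g^{-1}(0)=S_2$ connected, completing the proof of Theorem~\ref{t:Poincare}.
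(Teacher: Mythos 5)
Your construction is essentially the paper's: both build the product $S_1\times[0,\varepsilon]$ via the framing and then attach tubes (your 1-handles $D^1\times D^k$) along embedded arcs between components, transporting the framing along each arc and smoothing the corners; the only organizational difference is that you do all $c-1$ attachments at once via a spanning tree, while the paper iterates one arc at a time. One imprecision worth flagging: the assertion that ``the codimension bound $m-k\ge 3$ guarantees that no $\pi_0$ or $\pi_1$ obstruction of the structure group survives'' is not literally true ($\pi_0\bigl(GL(m-k-1)\bigr)\cong\Z/2$ regardless of codimension), and the extension of the frame over each handle does require making the two end-frames lie in the same component of $GL$; the paper handles this explicitly by choosing the bases $w_1,\dots,w_{m-n}$ at $x$ and $y$ with matching orientations and then invoking connectedness of $GL^+$. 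Your argument works once that orientation choice is built into the handle attachment (and, relatedly, the corner-rounding that you leave implicit is spelled out in the paper via the function $\psi$).
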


The main idea of the proof is to construct a manifold $W_1\simeq S_1\times [0,1]$, cut out two holes in $S_1\times \{1\}$
around $x$ and $y$ that are in different components of $S_1\times \{1\}$ and connect them with a tubular $(\dim S_1+1)$-dimensional neighborhood of a curve connecting $x$ and $y$. While there 
is a well-known construction called ``boundary connected sum'' for abstract differential manifolds~\cite{Kosinski:2007}, we could not
find any reference that this can be done all inside the ambient space $X$, so here we present the sketch of our construction. 

\begin{proof}
Let $m:=\dim X$ and $n:=\dim X-\dim S_1$. 
By the product neighborhood theorem, the framing of $S_1$ determines a diffeomorphism 
$d$ from $S_1\times B_2^n$ to a (closed) neighborhood $U$ of $S_1$ in $X$, where $B_2^n\subseteq\R^n$ is the closed ball of diameter $2$.
Let as choose a smooth metric on $S_1$, extend it to a product metric on $U$ via the diffeomorphism $d$ and smoothly extend it 
to a metric on the whole of $X$. Let $v_1,\ldots, v_n$ be the vector fields on $U$ defined as the $d_*$-image of the euclidean coordinate 
vector fields $e_1,\ldots, e_n\in TB_2^n$ identified with vectors of $T(S_1\times B_2^n)\simeq TS_1\times TB_2^n$ 
orthogonal to $TS_1\times \{0\}\subseteq T_{(s,b)} (S_1\times B_2^n)$.
The image $d(S_1 \times [0,1]\times \{0\})=:W_1$ is a smooth submanifold of $X$ of dimension $m-n+1$ contained in $U$,
with boundary $\partial W_1=S_1\sqcup S_1'$ where $S_1'=d(S_1\times \{1\}\times \{0\})$. The vectors $v_2,\ldots, v_n$ form a framing
of $W_1$ and the vector field $(v_1)|_{W_1}$ is tangent to $W_1$ such that on the boundary, $v_1|_{S_1}$ goes in the ``inwards'' 
and $v_1|_{S_1'}$ in the ``outwards'' direction wrt. $W_1$.  Further, $S_1'$ is a~diffeomorphic copy of $S_1$, so it has the same 
number of connected components. Let $x,y\in S_1'$ be two points in different components of $S_1'$. 

Let $\varphi: [0,1]\to X$ be a smooth embedded curve such that $\varphi(0)=x$, $\varphi(1)=y$, $\varphi(t)\notin W_1$ for $t\neq 0,1$ and
there exists an $\delta>0$ such that $\dot{\varphi}(t)=v_1(\varphi(t))$ for $t\in [0,\delta]$ and 
$\dot{\varphi}(t)=-v_1(\varphi(t))$ for $t\in [1-\delta,1]$. 
Such curve exists, because $X$ is connected and the codimension of $W_1$ in $X$ is at least two, so $X\setminus W_1$ is still connected.
Without loss of generality, we may assume that $\delta$ is small enough so that $\varphi([0,\delta]\cup [1-\delta, 1])$ 
is in the image of $d: S_1\times B_2^n\to U$.

The curve $\varphi[0,1]$ is contractible, so any fibre bundle over it is trivial and admits a global section; in particular,
there exists a global section $T$ of the principal bundle of all framings of its $m-1$-dimensional normal bundle. 
That means, for $x=\varphi(t)$, $T(x):=(u_1,\ldots, u_{m-1})$ is a framing of $N_x \varphi[0,1]$ and $T$ is smooth. 
Any other framing of $\varphi[0,1]$ is determined by a smooth map $[0,1]\to Gl(m-1)$ which linearly transforms the framing
vectors in each $\varphi(t)$. Let $w_1,\ldots,w_{m-n}$ be a basis of $T_x S_1'$, resp.  $T_y S_1'$, oriented so that
$(w_1(x),\ldots, w_{m-n}(x), v_2(x),\ldots, v_n(x))$ has the same orientation of $N_x \varphi[0,1]$ as $T(x)$ and 
$(w_1(y),\ldots, w_{m-n}(y), v_2(y),\ldots, v_n(y))$ has the same orientation of $N_y \varphi[0,1]$ as $T(y)$. 
Let as naturally extend the vector fields $w_1,\ldots, w_{m-n}$ to $\varphi([0,\delta]\cup [1-\delta, 1])$ by
means of parallel transport along $\varphi$.  The connectedness
of $Gl^+(m-1)$ implies that there exists a framing $T'$ of $N(\varphi [0,1])$ such that 
$T'(\varphi(t))$ coincides with $(w_1,\ldots, w_{m-n}, v_2,\ldots, v_n)$ for $t\in [0,\delta]\cup [1-\delta, 1]$.
By a slight abuse of notation, we again denote the first $m-n$ framing vectors of $T'$ by $w_1,\ldots, w_{m-n}$: these
are normal vector fields on $\varphi[0,1]$ extending the already defined $\{w_i\}_i$ in $\varphi([0,\delta]\cup [1-\delta, 1])$.

For any $a\in \varphi[0,1]$ and $u\in\R^{m-n}$, let $F(a,u)$ be equal to $\gamma(1)$ where $\gamma$ is a geodesic, $\gamma(0)=a$
and $\dot\gamma(0)=u_1 w_1+u_2 w_2+...+u_{m-n} w_{m-n}$ whenever the geodesic is defined on $[0,1]$. If $\epsilon>0$ is small enough, then
$F: \varphi[0,1]\times B_\epsilon^{m-n}\to X$ is a smooth embedding and its image is an embedded $(m-n+1)$-dimensional submanifold
of $X$ (with corners in $\{x,y\}\times \partial B_\epsilon^{m-n}$). Using the properties of our metric, the $S_1'$-geodesics in $x$, 
resp. $y$ coincide with the geodesics in $X$, so $F$
maps $\{x,y\}\times B_\epsilon^{m-n}$ to a closed neighborhood $D_x\sqcup D_y$ of $\{x, y\}$ in $S_1'$, where $D_x$ resp. $D_y$
is a geodesic $\epsilon$-ball in $S_1'$.  

If $t<\delta$ or $t>1-\delta$, then $F(\varphi(t),B_{\epsilon}^{m-n})$ is contained in $U$ and disjoint from $W_1$ due to the choice of the 
product metric. If $t\in [\delta,1-\delta]$, then there exists some $\epsilon(t)<\epsilon$ and a neighborhood $U(t)$ of $t$ 
such that $F(\varphi(U(t)),B_{\epsilon(t)}^{m-n})$ is disjoint from $W_1$. By compactness of $[0,1]$, we can make $\epsilon$ smaller
and assume that $F(\varphi(0,1)\times B_\epsilon^{m-n})$ is disjoint from $W_1$.
\begin{figure}
\begin{center}
\includegraphics{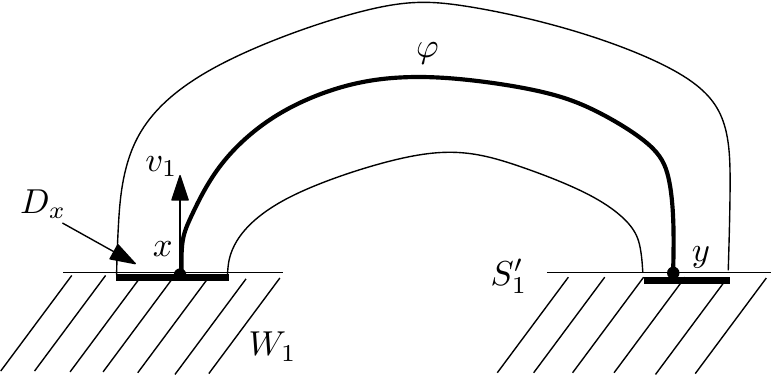}
\end{center}
\caption{The $m-n$-dimensional neighborhoods $D_x$ of $x$ and $D_y$ of $y$ in $S_1'$ are connected via an $m-n+1$ dimensional
tubular neighborhood $T$ of a curve $\varphi$ connecting $x$ and $y$.}
\label{f:connected}
\end{figure}
Let $T:=F(\varphi[0,1]\times B_\epsilon^{m-n})$. This is a smooth contractible $(m-n+1)$-manifold (with corners), 
so its normal bundle admits a global framing. By an argument completely analogous to the one above, we may extend the framing 
$v_2,\ldots, v_n$ defined on $F(\varphi([0,\delta]\cup [1-\delta, 1])\times B_\epsilon^{m-n})$ to a smooth framing of $T$.

By construction, $T\cap W_1$ consists of two $(m-n)$-discs $D_x=F(\{x\}\times B_\epsilon^{m-n})$ and $D_y$ in $S_1'$.
At this point, $W_1\cup T$ is a framed manifold, but we still need to ``smooth the corners'' $\partial D_x$ and $\partial D_y$.

Let $\psi: [0,1]\to [\frac{1}{2}\epsilon, \epsilon]$ be a smooth function such that $\psi(0)=\psi(1)=\epsilon$, 
$\psi'(0)=-\infty$, $\psi'(1)=\infty$ and further, for some $\beta>0$, the inverse function 
$(\psi|_{[0,\beta]})^{-1}: [\psi(\beta),\epsilon]\to [0,\beta]$ can be extended to a \emph{smooth} function 
$[\psi(\beta), \infty)\to [0,\beta]$ by sending each $x>\epsilon$ to $0$, and similarly 
$(\psi|_{[1-\beta,1]})^{-1}: [\psi(1-\beta),\epsilon]\to [1-\beta,1]$ can be extended to a \emph{smooth} function 
defined on $[\psi(1-\beta),\infty)$ by mapping each $x>\epsilon$ to $1$.\footnote{
Equivalently, the graph of $\psi$ united with $\{0,1\}\times [1,\infty)$ is a smooth submanifold of $\R^2$.
}
Finally, define $T'\subseteq T$ by 
$$
T':=\{F(\varphi(t),u):\,|u|\leq \psi(t)\}.
$$
We claim that $W:=W_1\cup T'$ is a smooth manifold with boundary. By construction, $W_1$ and $T'\setminus W_1$ are smooth manifolds
with $W_1\cap T'=D_x\cup D_y$, so we just need to analyze their intersection. 

Let $u$ be in the interior of $D_x$, resp. $D_y$. Let $V$ be an open neighborhood of $v$ with positive distance from $\partial D_x$ 
(resp. $\partial D_y$) and define an $S_1'$-chart $\phi_u: V\to\R^{m-n}$ that maps a neighborhood $V\subseteq D_x$
(resp. $D_y$) of $u$ to $\R^{m-n}$. Let $N\subseteq X$ be a neighborhood of $v$ in $X$ that is disjoint from the topological boundary
of $W$ in $X$ and $N\cap S_1'\subseteq V$. 
Then the diffeomorphism $(\phi_u, \mathrm{id})\circ d^{-1}$ takes $N$ to an open subset of $\R^m$ such that $N\cap W$ is the preimage of
$\R^{m-n+1}\times \{0\}$ (the projection to the $(m-n+1)$'th component of the image of $N$ is a neighborhood of $1\in\R$).

It remains to show that any $v\in\partial D_x$ resp. $\partial D_y$ is in the boundary of $W=T'\cup W_1$, that is, some neighborhood 
of $v$ in $W$ is mapped by an $X$-chart to $\R^{m-n}\times (-\infty,1]\times \{0\}$. 
Let $\phi_v$ be an $S_1'$-chart mapping a neighborhood $V$ of $x$ in $S_1'$ to $\R^{m-n}$ such that $u_v(v)=0$ and let 
$N:=d^{-1}(V\times (0,1+\delta))\subseteq X$ be a neighborhood of $v$ in $X$. 
Let $(\phi_v, \text{id})\circ d^{-1}: N\to\R^m$ be an $X$-chart: it maps
\begin{itemize}
\item $S_1'\cap N$ to $\R^{m-n}\times \{1\}\times \{0\}$,
\item $W_1\cap N$  to $\R^{m-n}\times (0, 1]\times\{0\}$, 
\item $T'\cap N$ to $\{(z_1,\ldots, z_{m-n}, y, 0,\ldots, 0):\,\,1\leq y\leq \omega(z)$\}, and
\item $W\cap N$ to $\{(z_1,\ldots, z_{m-n}, y, 0,\ldots, 0):\,\,0\leq y\leq \omega(z)\}$
\end{itemize}
where $\omega(x)=1$ whenever $x\notin D_x$ and $\omega(z)=\sup \,\{t+1:\,d(z,t,0)\in T'\}$.

If $e(u)$ is the geodesic distance of $u$ from $x$, then $\omega(z)=\sup\,\{t+1:\,e(z)\leq \psi(t)\}$.
The function $\psi$ maps a neighborhood of $0$ to some $(\epsilon-\alpha, \epsilon]$ and the inverse function
$\psi^{-1}$ to this restriction has derivative $0$ in $\epsilon$. If we extend $\psi^{-1}(a)$ to be $0$ for $a>\epsilon$,
we get a smooth function from a neighborhood of $\epsilon$ in $\R$ to nonnegative numbers with 
$\psi^{-1}(\epsilon)=(\psi^{-1})'(\epsilon)=0$.
We can rewrite $\omega(z)$ to $\psi^{-1}(e(z))+1$ to see that it is a~smooth function defined on a neighborhood of $v$ in $S_1'$
with zero gradient in $v$ (note that $e(v)=\epsilon$). 
It follows that $W\cap N$ is diffeomorphic to some neighborhood of $(0,\ldots, 0,1,0,\ldots, 0)$ in
$\{(z_1,\ldots, z_{m-n}, y, 0,\ldots, 0):\,\,y\leq \omega(z)\}$ which is diffeomorphic to $\R^{m-n}\times (-\infty, 0]\times \{0\}$.
\begin{figure}
\begin{center}
\includegraphics{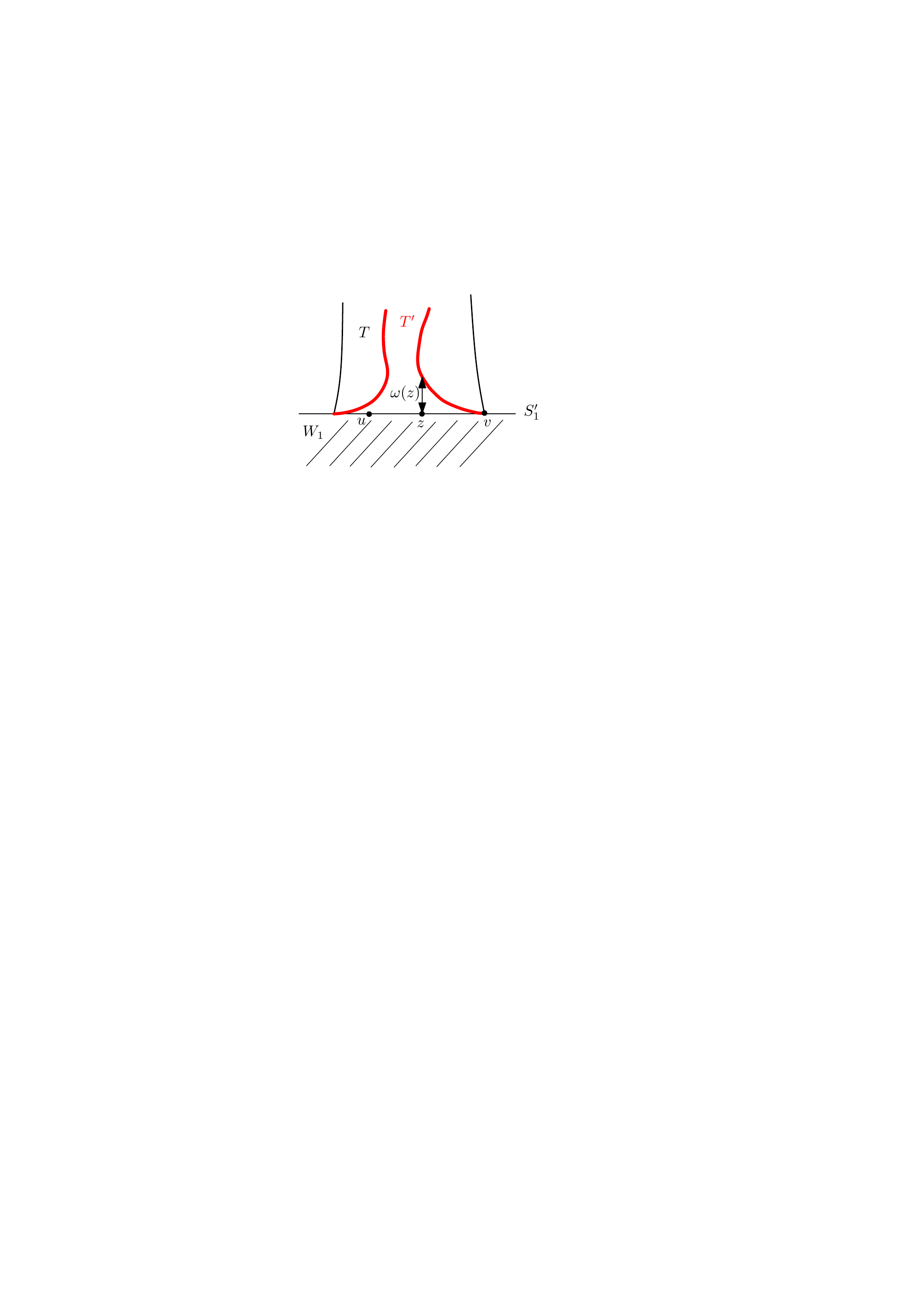}
\end{center}
\caption{Illustration of corner smoothing. $T'\subseteq T$ is chosen so that, in a neighborhood of $v\in\partial D_x$, 
$T'$ consists of points in $W_1\cup T$ whose distance from $W_1$ is bounded by a function $\omega(z)$ that has zero gradient in $v$.}
\label{f:smoothing}
\end{figure}

Hence $W$ is a smooth framed submanifold of $X$ of dimension $m-n+1$, the framing of the normal bundle being a restriction
of the framing of $W_1$ and $T$. If $m-n\geq 2$, then $\dim \partial D_x\simeq S^{m-n-1}$ is a sphere of dimension at least one
and hence is connected: by construction,  $\partial D_x$ and $\partial D_y$ are in the same component of the boundary $\partial W$.
If $m-n=1$, then $S_1'$ is a finite disjoint union of circles and yields $\partial W$ 
to be a connected sum of two circles containing $x$ and $y$ united with $S_1$ and other components of $S_1'$. 
In both cases, the number of connected components of $\partial W\setminus S_1$ is smaller then the number of connected components of $S_1'$.

The compactness of $X$ and $S_1$ implies that $S_1$ has only a finite number of components. 
In the same way as above, we may continue attaching tubular neighborhoods of curves connecting different components of $S_1'$
to obtain a framed manifold $W$ such that $\partial W\setminus S_1=:S_2$ is already connected.
\end{proof}
\section{Proof of Theorem~\ref{t:triviality}}\label{a:triv}
We need the following  observation, cf. \cite[p. 9]{Bryant-handbook}.
\begin{lemma}\label{l:dual}
Let $(X,A)$ be an $m$-dimensional pair of simplicial complexes such that $A$ contains the $(i-1)$-skeleton of $X$. Then there is an $(m-i)$-subcomplex $Y$ of a subdivision $X'$ of $X$ such that $Y$ is disjoint with $A$ and $X'\subseteq A*Y$.

Moreover, for each $(m-i)$-simplex $\tau$ in $Y$ there is an $i$-simplex
$\s$ of $X\setminus A$ such that the following holds: $\s'*\t$
is a simplex of $X'$ if and only if $\s'$ is a face of $\s$.\end{lemma}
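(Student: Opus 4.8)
The statement is the standard "dual block decomposition" adapted to a pair where $A$ already contains the $(i-1)$-skeleton. My plan is to produce $X'$ by barycentric subdivision and take $Y$ to be the full subcomplex spanned by the barycenters of the simplices of $X$ not lying in $A$. Concretely, first barycentrically subdivide $X$ once to get $X'=\mathrm{sd}\,X$; the vertices of $X'$ are the barycenters $\hat\sigma$ of simplices $\sigma$ of $X$, and a set $\{\hat\sigma_0,\dots,\hat\sigma_k\}$ spans a simplex of $X'$ iff (after reindexing) $\sigma_0\subsetneq\sigma_1\subsetneq\cdots\subsetneq\sigma_k$ is a flag in $X$. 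Let $Y$ be the full subcomplex of $X'$ on the vertex set $\{\hat\sigma : \sigma\in X\setminus A\}$, and note $A$ is still triangulated by $\mathrm{sd}\,A\subseteq X'$ on the complementary vertex set $\{\hat\sigma : \sigma\in A\}$. Since $A\supseteq X^{(i-1)}$, every simplex $\sigma\in X\setminus A$ has $\dim\sigma\ge i$, so a flag $\sigma_0\subsetneq\cdots\subsetneq\sigma_k$ of such simplices has length $k\le m-i$; hence $\dim Y\le m-i$. For the "disjoint from $A$" claim: $Y$ and $\mathrm{sd}\,A$ share no vertices by construction, and since both are full subcomplexes on their respective vertex sets, they share no simplices; moreover any flag in $X$ splits as an "$A$-part" (simplices in $A$) followed by a "non-$A$-part" (simplices in $X\setminus A$, using that $A$ is a subcomplex so $\sigma'\subseteq\sigma\in A$ forces $\sigma'\in A$), which is exactly the statement $X'\subseteq (\mathrm{sd}\,A)*Y$, i.e. $X'\subseteq A*Y$ after identifying $A$ with its subdivision.

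For the "moreover" part: given an $(m-i)$-simplex $\tau$ of $Y$, it corresponds to a maximal-length flag $\sigma_0\subsetneq\sigma_1\subsetneq\cdots\subsetneq\sigma_{m-i}$ with all $\sigma_j\in X\setminus A$. Maximality of the length forces $\dim\sigma_0=i$ and $\dim\sigma_{j}=i+j$ for all $j$ (consecutive strict inclusions with total dimension jump $m-i$ over $m-i$ steps, starting no lower than $i$ and ending no higher than $m$), and in particular $\sigma_j$ is a codimension-$0$ face flag inside the top simplex $\sigma:=\sigma_{m-i}$. Set $\sigma:=\sigma_{m-i}$; this is an $i$-dimensional... wait — I need to recheck: $\dim\sigma_0=i$ forces $\sigma_0$ itself to be an $i$-simplex of $X\setminus A$, and that is the $\sigma$ I want. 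Actually the cleanest choice is $\sigma:=\sigma_0$, the bottom of the flag, which is an $i$-simplex of $X\setminus A$. I then claim: $\sigma'*\tau$ is a simplex of $X'$ iff $\sigma'$ is a face of $\sigma$. A simplex of $X'$ containing $\tau$ is a flag refining $\sigma_0\subsetneq\cdots\subsetneq\sigma_{m-i}$; since the dimensions are forced to be $i,i+1,\dots,m$ and consecutive, no new vertices can be inserted \emph{between or above} the $\sigma_j$ (there is no room), so any extra barycenters $\hat\rho$ must satisfy $\rho\subsetneq\sigma_0=\sigma$, i.e. correspond to proper faces of $\sigma$ forming a sub-flag below it; these together with $\tau$ span a simplex of $X'$ lying in $\mathrm{sd}(\sigma)*\tau$, and conversely every face of $\sigma$ (via its barycenter, or a flag of its faces) concatenates with the flag of $\tau$ to give a simplex of $X'$. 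Identifying such sub-flags below $\sigma$ with faces of $\sigma$ (since $\sigma$ itself contributes $\hat\sigma=\hat\sigma_0$, already a vertex of $\tau$, the relevant faces are the proper ones, but "$\sigma'$ a face of $\sigma$" should be read as: $\sigma'$ ranges over simplices of $X'$ supported on barycenters of faces of $\sigma$), we get exactly the asserted characterization.

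The main obstacle I anticipate is purely bookkeeping: carefully stating what "$\sigma'$ is a face of $\sigma$" means once we have passed to the subdivision $X'$ (a face of $\sigma$ in $X$ is not literally a simplex of $X'$), and verifying the dimension-forcing argument that pins down $\dim\sigma_j=i+j$ along a maximal flag — this is where the hypothesis $A\supseteq X^{(i-1)}$ is used essentially, and it must be invoked both to bound $\dim Y\le m-i$ and to rule out inserting vertices above $\sigma_0$ in the flag. Everything else (fullness of $Y$, the join decomposition $X'\subseteq A*Y$, naturality of barycentric subdivision) is standard simplicial combinatorics and can be cited or dispatched in a line.
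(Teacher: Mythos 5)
Your construction is correct and serves the same purpose, but it genuinely differs from the paper's. You take $X' = \mathrm{sd}\,X$ (one global barycentric subdivision) and $Y$ the full subcomplex on the barycenters of simplices outside $A$, with the flag combinatorics doing all the work. The paper instead builds $X'$ by induction on dimension: starting from the $(i-1)$-skeleton unchanged, at each stage it cones each new simplex $\sigma\in X\setminus A$ from its barycenter $b_\sigma$ over the (already subdivided) $\partial\sigma$, and sets $Y$ to be the union of the $b_\sigma$'s joined with the previous dual complex. The two constructions produce the same abstract $Y$ (the same flags of non-$A$ simplices), but different ambient subdivisions $X'$: crucially, the paper's $X'$ leaves $A$ entirely unsubdivided, whereas yours replaces $A$ by $\mathrm{sd}\,A$. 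This is precisely the reason the ``moreover'' clause reads cleanly in the paper---faces $\sigma'$ of the dual $i$-simplex $\sigma$ lie in $X^{(i-1)}\subseteq A$ and hence are literally simplices of the paper's $X'$, so ``$\sigma'\ast\tau\in X'$ iff $\sigma'$ is a (proper) face of $\sigma$'' makes sense verbatim. In your version those faces are themselves barycentrically subdivided, so, as you noticed, the clause must be reread as ``$\sigma'$ is a simplex of $\mathrm{sd}(\partial\sigma)$.'' This reinterpretation is harmless for the downstream use (Lemma~\ref{l:triv} and Theorem~\ref{t:triviality} only need the join decomposition at the level of underlying spaces), but it is a real deviation from the statement as written. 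Your dimension-forcing argument for the flag $\sigma_0\subsetneq\cdots\subsetneq\sigma_{m-i}$ is correct and is essentially the content of the paper's inductive ``dual simplex'' bookkeeping, just done in one shot. What your approach buys is a single explicit global construction with transparent combinatorics; what the paper's buys is that $A$ stays untouched, which makes the ``moreover'' part hold on the nose and makes the join $A\ast Y$ literally a simplicial statement rather than one about underlying spaces.
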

The subcomplex $Y$ from the previous lemma is called the \emph{dual complex to $A$ in $X$}.
Also the simplex $\s$ is called the \emph{dual simplex to $\t$.}
\begin{proof}
We proceed by induction on the dimension of $X$. 
When $m=i-1$, the statement holds for the empty subcomplex $Y$ of $X$.

When $m\ge i$ we can use the induction hypothesis on $X^{(m-1)}$ to obtain the dual complex $Y^{(m-i-1)}\subseteq X'^{(m-1)}$ where $X'^{(m-1)}$ is a subdivision of $X^{(m-1)}$. For each $m$-simplex $\s$ of $X\setminus A$ we have that $\s\cong \partial\s * b_\s$ where $b_\s$ is the barycenter of $\s$. Since $\partial \s$ can be considered as a subcomplex of $X'^{(m-i-1)}$, we have that $X^{(m-1)}\cup \s$ is a subspace of $X'^{(m-1)} * b_\s$. Similarly whole $X$ is a subspace of $X'^{(m-i)}* \{b_\s \: \s\text{ is an $m$-simplex of }X\}$. Therefore the desired $Y$ can be set to $Y^{(m-i-1)} * \{b_\s\: \s\text{ is an $m$-simplex of }X\}$.

To prove the second statement, we need to distinguish two cases. First, when $m=i$, then the dual simplex to each $[b_\s]$ is $\s$. Second, when $m>i$, each $(m-i)$-simplex $\t$ has the form $b *\t'$ for some $(m-i-1)$-simplex $\t'$ of $Y^{(m-i-1)}$. The dual simplex $\s$ to $\t$ is equal to the dual simplex of $\t'$ from the induction since $\s' *\t\in X'$ if and only if $\s' * \t'\in X'^{(m-1)}$.
\end{proof}

Observation~\ref{o:triv} directly follows from the following lemma:
\begin{lemma}\label{l:triv}
Let $f\:K\to\R^n$ be a map such that $(X,A):=|f|^{-1}([0,r],
\{r\})$ is pair of simplicial complexes. If the map $f|_A$ can
be extended to a map $f^{(i-1)}\:X^{(i-1)}\cup A\to S^{n-1}$,
then there is an $r$-perturbation $g$ of $f$ such that $g^{-1}(0)$
is an
$(m-i)$-dimensional subcomplex of some subdivision of $X$.\end{lemma}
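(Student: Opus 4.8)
The plan is to take the dual complex of Lemma~\ref{l:dual} as the prescribed zero set, and to produce an extension of $f|_A$ that vanishes exactly on it. Put $\widehat A:=A\cup X^{(i-1)}$, a subcomplex of $X$ that contains the $(i-1)$-skeleton; by hypothesis $f|_A$ extends to a continuous map $f^{(i-1)}\:\widehat A\to S^{n-1}$, where $S^{n-1}=\{x\:|x|=r\}$ and $B^n$ is the ball it bounds. Apply Lemma~\ref{l:dual} with $\widehat A$ playing the role of $A$: this yields a subdivision $X'$ of $X$ which does not subdivide $\widehat A$, together with the dual complex $Y$, an $(m-i)$-subcomplex of $X'$ disjoint from $\widehat A$ with $X'\subseteq\widehat A*Y$. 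Because $\widehat A$ and $Y$ are vertex-disjoint, every simplex of the join $\widehat A*Y$, and hence every simplex of $X'$, has a unique decomposition $\sigma*\tau$ as a possibly empty face $\sigma$ spanned by vertices of $\widehat A$ joined with a possibly empty face $\tau$ spanned by vertices of $Y$.

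Next I would cone $f^{(i-1)}$ off over $Y$ to build $e\:X\to\R^n$. For $z$ in a simplex $\sigma*\tau$ of $X'$ as above, split the barycentric coordinates of $z$ and write $z=s\,a+(1-s)\,b$ with $a\in\sigma$, $b\in\tau$, $s\in[0,1]$ (reading the case $\sigma=\emptyset$ as $s=0$ and $\tau=\emptyset$ as $s=1$), and set $e(z):=s\cdot f^{(i-1)}(a)$ if $s>0$ and $e(z):=0$ if $s=0$. The definition is consistent on common faces, so $e$ is well defined, and it is continuous on each closed simplex: $a$ depends continuously on $z$ where $s>0$, while as $s\to 0$ the factor $f^{(i-1)}(a)$ stays on the radius-$r$ sphere, forcing $e(z)\to 0$ in agreement with the value $0$ prescribed on $Y$. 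On $A\subseteq\widehat A$ we have $\tau=\emptyset$, hence $s=1$ and $a=z$, so $e|_A=f^{(i-1)}|_A=f|_A$ and $e$ takes values in $B^n$. Finally $f^{(i-1)}$ is nowhere zero, so $e(z)=0$ holds exactly when $s=0$, that is, when the carrier simplex of $z$ lies in $Y$, that is, when $z\in Y$. Thus $e$ is an extension of $f|_A$ with $e^{-1}(0)=Y$.

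It then suffices to pass from the extension to a perturbation: by Lemma~\ref{l:ext-pert} part~\ref{n:one} there is a strict $r$-perturbation $g$ of $f$ with $g^{-1}(0)=e^{-1}(0)=Y$, which is an $(m-i)$-dimensional subcomplex of the subdivision $X'$ of $X$, as claimed. The only step that is not bookkeeping is the coning construction: one has to check that it puts zeros on all of $Y$ and on nothing else. Enlarging $A$ to $\widehat A=A\cup X^{(i-1)}$ is precisely what makes Lemma~\ref{l:dual} applicable (its hypothesis is that the distinguished subcomplex contain $X^{(i-1)}$), and since the subdivision leaves $\widehat A$, and in particular $A$, untouched, $e$ genuinely extends $f|_A$, so that Lemma~\ref{l:ext-pert} applies without change.
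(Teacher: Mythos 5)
Your proof is correct and follows essentially the same route as the paper: set $\widehat A := A\cup X^{(i-1)}$, take the dual complex $Y$ of Lemma~\ref{l:dual}, cone $f^{(i-1)}$ off over $Y$ (the paper writes this as the join $f*\boldsymbol{0}_Y$), and pass from the extension to an $r$-perturbation via Lemma~\ref{l:ext-pert}. Your version is slightly more explicit about continuity, about why $e^{-1}(0)=Y$ exactly, and about the appeal to Lemma~\ref{l:ext-pert}, but the substance is identical.
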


\begin{proof}
To prove the lemma we need to find an extension $g\:X\to\R^n$ of a given map $f^{(i-1)}\:X^{(i-1)}\cup A\to S^{n-1}$ such that $g^{-1}(0)$ is a simplicial complex of dimension at most $m-i$. Let $Y$ be the dual complex to $X^{(i-1)}\cup A$ in $X$. We define the extension $g\:X\to\R^n$ by $g:=f*\boldsymbol{0}_Y$, that is, for each point $(x,t,y)$ of each simplex $\s*\t$ in $X\subseteq (X^{(i-1)}\cup A)* Y$ we define $g(x,t,y):=tf(x)+(1-t)0$. Clearly, $g^{-1}(0)=Y$.
\end{proof}

\begin{proof}[Proof of Theorem~\ref{t:triviality}] We are given an $m$-dimensional simplicial pair $(X,A)$, natural numbers $i$ and $n$ such that $n<i\le (m+n)/2-1$ and a map $h\:X^{(i-1)}\cup A\to S^{n-1}$. 
To prove the theorem, we will find an extension $g\:X\to\R^n$ of $h$ such that $g^{-1}(0)$ is a cell complex obtained from an $(m-i-1)$-dimensional simplicial complex by attaching cells of dimension $m-n$.

We will use the dual complex $Y$ to $X^{(i-1)}\cup A$ in
$X$ as in Lemma~\ref{l:dual}. Part of the map $g$ is easy to define, namely, for every $(x,t,y) \in (X^{(i-1)}\cup A)*Y^{(m-i-1)}$ we set $g(x,t,y):=(1-t) h(x)$.
(Note that for $t=1$, we have $(x,t,y)\in Y^{(m-i-1)}$.) For the rest, we need to define $g$ on each $ \Delta:=(\partial \s) * \t$ where $\t$ is an arbitrary $(m-i)$-simplex of $Y$ and $\s$ is its dual $i$-simplex in $X\setminus A$. We have that $\Delta=\partial\s*(b_\t* \partial\t)=(\partial\s*b_\t)*\partial\t$ where $b_\t$ is the barycenter of $\t$. Therefore we can write each point $p$ of the join $( \partial\s*b_\t)*\partial\t$ as $p=(x,s,t,y)$ where $x\in\partial \s$, $y\in\partial\t$ and $s,t\in[0,1]$. (We have $p\in\partial\s$ for $s=0$ and $p\in\t$ for $t=1.)$
\begin{enumerate}
\item \label{i:exten-kill-homology}
In the case where $h|_{\partial\s}$ is homotopically nontrivial, we define 
$$ g(x,s,t,y):=(1-t)\big((1-s)h(x)-{ts} \iota(y)\big),$$ where $\iota\:\partial\t\cong S^{m-i-1}\to S^{n-1}$ is an embedding of $\partial\t$ to the equatorial $(m-i-1)$-subsphere. Here we need that $m-i\le n$.
\item Otherwise, we choose an arbitrary extension $h'\:(\partial\s)*b_\t\to S^{n-1}\subseteq\R^n$ of $h|_{\partial\s}$ and define 
$$g(x,s,t,y):=(1-t) h'(x,s).$$

We can see that ${(g|_{\Delta}})^{-1}(0)$
is equal to $\partial\t$ in this case.
\end{enumerate}
To finish the proof, it suffices to show that in the case~\ref{i:exten-kill-homology} above, ${(g|_{\Delta}})^{-1}(0)\cong\Cone (\eta)$ for some $\eta\:S^{m-n-1}\to \partial\t\cong S^{m-i-1}$. Roughly speaking, we need to solve the equation $h|_{\partial\s}(x)=\iota(y)$, that is, to identify the $(h|_{\partial\s})$-preimage of equatorial $(m-i-1)$-subsphere of $S^{n-1}$. Informally, our strategy is to employ the fact that the elements $[h|_{\partial\s}]$ of the stable homotopy group $\pi_{i-1}(S^{n-1})$ are  iterated suspensions and thus, without loss of generality, the $(h|_ {\partial\s})$-preimage of the equatorial $(m-i-1)$-subsphere  is the equatorial subsphere of the same codimension (and the map  $\eta$ above is the restriction of  $h$ onto this subsphere).

\begin{itemize}
\item Formally, by the Freudenthal suspension theorem we know that $[h|_{\partial \s}]$ equals a $j$-fold suspension $\Sigma^j [\eta]$ for some
$\eta\:S^{i-1-j}\to S^{n-1-j}$ assuming the condition $i-1-j\le 2(n-1-j)-1$. Given the requirement $n-1-j=m-i-1$, the condition is equivalent to $i\le (m+n)/2-1$---the assumption of the theorem. \item Without loss of generality, we can assume that  $h|_{\partial \s}=\Sigma^j\eta$. Indeed, in general, there is a homotopy $H\:h|_{ \partial\s}\sim \Sigma^j \eta$. We can parameterize a regular neighborhood $N$ of $\partial\s$ in $(\partial\s*b_\t)$ by $N\cong\partial\s\times[0,1)$. Since $\Delta\setminus (N* \partial\t) \cong \Delta$, the map $g$ can be defined on the domain $\Delta\setminus(N*\partial\t)$ via the same formula as above. For each point $(x,s,t,y)$ of $N*\partial\t$ we define $g(x,s,t,y):=\alpha H(s,t)$.
\item Now it is easy to see that $${(g|_{\Delta}})^{-1}(0)=\left\{ \left(x, s, {1-s\over s}, \eta(x)\right)\in\Delta: x\in S^{m-n-1}\text{
and }s\in[0.5,1]\right\},$$
where $S^{m-n-1}$ denotes the equatorial subsphere of $\partial\s\cong S^{i-1}$. Because of the identifications $(x,s,1,y)\sim(x',s',1,y)$  and $(x,1,0,y)\sim(x',1,0,y')$ in the join $\Delta=(\partial\s*b_\t) * \partial\t$, we get that the zero set is homeomorphic to $\{(x,s)\in S^{m-n-1}\times[0.5,1]\}/_\sim$ where the equivalence $\sim$ is defined by $$(x,1)\sim(x',1)\text{ for each $x,x'$ and }(x,0.5)\sim (x',0.5)\text{ when }\eta(x)=\eta(x').$$ But this space is homeomorphic to $\Cone(\eta)$ by definition.
\end{itemize}
\end{proof}
\heading{More general incompleteness results?} Theorem~\ref{t:triviality} yields that well group in dimension $m-i$ fails to capture the lack of extendability of $f|_A$ on $X^{(i)}$. Does this lack of extendability imply some robust properties of the zero set?
The answer is yes when $X$ is a triangulable manifold but we will not prove it here.
The lack of extendability implies that (some part of) the zero set of each perturbation projects to at least  $(m-i)$-dimensional subspace of $X$.
More formally, for every $Z\in Z_r(f)$  there is  a simplex $\s\in X$ of dimension $j\leq i$ and its dual cell $\tau$ such that every $j$-disk $B^j$ embedded in $\partial\s * \t$ with $\partial B^j=\partial\s$ intersects $Z$. There is a family of mutually disjoint  disks that are  parameterized by the $(m-j)$-cell $\t$ dual to $\s$ (here we use that $X$ is a manifold). Namely, for each $y$ of the interior of $\t$ we can choose $B_y:=\partial \s * y$. 
This property is not captured by $U_{m-i}(f,r)$ one can construct examples where $H_k(X,B)\cong 0$ for $k<m-i$ and thus each $U_k(f)$ is trivial. (We remark that nontriviality of $H_{m-i}\big(X,B;\pi_ {i-1} (S^{n-1})\big)$ is forced by obstruction theory and Poincar\'e duality when $X$ is a manifold.)
\section{Characterization by homotopy classes}\label{a:characterization}
The proof of 
Proposition~\ref{p:characterization} will utilize certain properties
of compact Hausdorff spaces. All maps are assumed to be continuous,
without explicitly saying it.

We say that a pair of spaces $(Y,Z)$ satisfy \emph{homotopy extension
property with respect to a space $T$} whenever each map $H'\:Y\times\{0\}\cup
Z\times[0,1]\to T$ can be extended to $H\:Y\times[0,1]\to T$.
The map $H'$ as above will be called a \emph{partial
homotopy} of  $H'|_{Y}$ on $Z$. It follows from \cite[Prop. 9.3]{HuBook}
that, once $K$ is compact Hausdorff and $T$ triangulable, every
pair of closed subsets $(Y,Z)$ of $K$ satisfies the homotopy
extension property with respect to $T$. 

In addition, for every two disjoint closed subsets $V$ and $W$
in a compact Hausdorff space $K$ there is a \emph{separating
function} $\chi\:K\to[0,1]$. That means, there is a function
$\chi\:K\to[0,1]$ that is $0$ on $V$ and $1$ on $W$. It is easily
seen that the values $0$ and $1$ above can be replaced by arbitrary
real values $s<t$.

Finally, every homotopy $H\:Y\times[0,1]\to T$ of the form $F(y,t)=F(y,0)$
will be called \emph{stationary}.

We first prove the easier version of Proposition~\ref{p:characterization}
where $\Zf$ is replaced by $Z_r^<(f):=\{g^{-1}(0)\mid g\:K\to
\R^n$ s.t. $\|g-f\|<r\}$. The simpler proof reveals better the main ideas and, in addition, the key step---i.e., the following lemma---is  required also in the full proof.

\begin{lemma}[From perturbations to extensions]\label{l:perturb-ext}
Let $f\:K\to \R^n$ be a map on a compact Hausdorff space $K$
and let $(X,A):=|f|^{-1}([0,r],\{r\})$. Then the families 
\[\label{e:array_1}\tag{A}
\{g^{-1}(0)\mid g\text{ is a strict $r$-perturbation
of } f\}\text{,}
 \]
\[\label{e:array_2}
\{h^{-1}(0)\mid
h\:(X,A)\to (\R^n,\R^n\setminus\{0\})\text{,
}h\sim f|_X\}
\text{ and}\tag{B}\]
\[\label{e:array_3}
\{e^{-1}(0)\mid e\:X\to\R^n \text{
is an extension
of } f|_A\} \tag{C}\] are all equal.\footnote{In (\ref{e:array_2}),
we consider homotopies of maps of pairs.}
Moreover, for every extension $e\:X\to\R^n$ of $f|A$ there is a ``corresponding'' strict $r$-perturbation $g$ with $g^{-1}(0)=e^{-1}(0)$ of the form $g=\chi e$ where  $\chi\:X\to\R^+$ is a positive scalar function.
\end{lemma}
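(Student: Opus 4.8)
The plan is to prove that families (A), (B) and (C) coincide by establishing the chain of inclusions $(\mathrm A)\subseteq(\mathrm B)\subseteq(\mathrm C)\subseteq(\mathrm A)$, where the last inclusion is witnessed by a strict $r$-perturbation of the promised form $g=\chi e$, so that the ``moreover'' clause comes for free. Throughout I use two facts available for the compact Hausdorff domain: closed subsets of $K$ are normal, hence carry Urysohn-type separating functions, and a pair of closed subsets of $K$ has the homotopy extension property with respect to every triangulable space, in particular with respect to $\R^n\setminus\{0\}$. For $(\mathrm A)\subseteq(\mathrm B)$, take a strict $r$-perturbation $g$; then $g^{-1}(0)\subseteq X$ because $g(x)=0$ forces $|f(x)|=|f(x)-g(x)|\le\|f-g\|<r$, so $h:=g|_X$ has $h^{-1}(0)=g^{-1}(0)$. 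On $A$ one gets $|h|\ge|f|-\|f-g\|\ge r-\|f-g\|>0$, so $h$ is a map of pairs $(X,A)\to(\R^n,\R^n\setminus\{0\})$, and the straight-line homotopy $(x,t)\mapsto(1-t)f(x)+tg(x)$ stays outside the origin on $A$ for all $t\in[0,1]$ (its norm there is again at least $r-\|f-g\|$), exhibiting $h\simeq f|_X$ as maps of pairs.

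For $(\mathrm B)\subseteq(\mathrm C)$, let $h\colon(X,A)\to(\R^n,\R^n\setminus\{0\})$ come with a homotopy $H$ of pairs to $f|_X$, and put $Z:=h^{-1}(0)$, a closed subset of $X$ disjoint from $A$. Using normality, pick an open $U\supseteq A$ in $X$ whose closure $\overline U$ still misses $Z$, so $h$ restricts to $\overline U\to\R^n\setminus\{0\}$. Restricting $H$ to $A$ gives a homotopy $\gamma$ from $h|_A$ to $f|_A$ inside $\R^n\setminus\{0\}$; combined with $h|_{\overline U}$ on $\overline U\times\{0\}$ it is a partial homotopy $\overline U\times\{0\}\cup A\times[0,1]\to\R^n\setminus\{0\}$ (the two parts agree on $A\times\{0\}$), which extends by the homotopy extension property to $\Gamma\colon\overline U\times[0,1]\to\R^n\setminus\{0\}$. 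Choosing a separating function $\tau\colon X\to[0,1]$ with $\tau\equiv1$ on $A$ and $\tau\equiv0$ on $X\setminus U$, define $e\colon X\to\R^n$ to be $x\mapsto\Gamma(x,\tau(x))$ on $\overline U$ and $h$ on $X\setminus U$; the two pieces glue continuously since $\tau=0$ and $\Gamma(\,\cdot\,,0)=h$ on the overlap $\overline U\setminus U$. Then $e$ extends $f|_A$ (because $\tau|_A=1$ and $\Gamma(\,\cdot\,,1)=f|_A$), and $e^{-1}(0)=Z=h^{-1}(0)$ since $e$ is zero-free on $\overline U$ and equals $h$ off $U$.

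For $(\mathrm C)\subseteq(\mathrm A)$, equivalently the ``moreover'' clause, let $e\colon X\to\R^n$ extend $f|_A$. The goal is a continuous $\chi\colon X\to(0,1]$ with $\chi|_A\equiv1$ and $|\chi(x)e(x)-f(x)|<r$ for every $x\in X$; granting this, set $g:=\chi e$ on $X$ and $g:=f$ on the complementary closed set $|f|^{-1}[r,\infty)$. These agree on $X\cap|f|^{-1}[r,\infty)=A$ (where $\chi=1$ and $e=f$), so $g\colon K\to\R^n$ is continuous, $\|g-f\|<r$ (the norm of $g-f$ is a continuous function on the compact $K$, identically zero outside $X$ and $<r$ on $X$, hence has maximum $<r$), and $g^{-1}(0)=e^{-1}(0)$ since $\chi>0$ and $|f|>0$ off $|f|^{-1}[0,r)$. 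To find $\chi$, note that convexity of the norm gives $|\chi e-f|\le\chi\,|e-f|+(1-\chi)\,|f|$, a weighted average of $|e-f|$ and $|f|$, and it suffices to keep this average strictly below $r$. On the open set $N:=\{x\in X:|e(x)-f(x)|<r\}$, which contains $A$, any value $\chi(x)\in(0,1]$ does the job, since $|e-f|<r$ and $|f|\le r$ force $\chi\,|e-f|+(1-\chi)\,|f|<r$. If $N=X$ we may take $\chi\equiv1$; otherwise $X\setminus N$ is a nonempty compact set disjoint from $A$, so $c_1:=\max_{X\setminus N}|f|<r$, while $M:=\max_X|e-f|>0$, and any $\chi\le\kappa:=\min\{1,(r-c_1)/(2M)\}$ keeps the average at most $\kappa M+c_1<r$ there. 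Finally pick a separating function $\lambda\colon X\to[0,1]$ with $\lambda|_A\equiv1$, $\lambda|_{X\setminus N}\equiv0$ and set $\chi:=\kappa+(1-\kappa)\lambda$; this is $(0,1]$-valued, equals $1$ on $A$ and $\kappa$ on $X\setminus N$, so it meets all requirements.

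The step I expect to be the main obstacle is this last one. An extension $e$ of $f|_A$ may be arbitrarily large and badly behaved away from $A$, so the rescaling $\chi$ must be tiny there to pull $\chi e$ back to within distance $r$ of $f$; at the same time $\chi$ has to be exactly $1$ on $A$ for the rescaled map to glue continuously with $f$ outside $X$, and it must never vanish so that the zero set is left untouched. The resolution hinges on the elementary observation that on the neighborhood $\{|e-f|<r\}$ of $A$ \emph{every} positive rescaling is harmless, whereas on its complement $|f|$ is uniformly bounded away from $r$, leaving slack for a uniformly small $\chi$; a single Urysohn function interpolates between the two regimes. The other two inclusions are comparatively routine, the only subtlety being the appeal to the homotopy extension property, which is precisely where compactness and Hausdorffness of $K$ enter.
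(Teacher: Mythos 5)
Your proof is correct and follows essentially the same route as the paper: the cyclic chain $(\mathrm A)\subseteq(\mathrm B)\subseteq(\mathrm C)\subseteq(\mathrm A)$, the straight-line homotopy for the first inclusion, the homotopy extension property for the second, and a Urysohn-type rescaling $g=\chi e$ for the third. The only cosmetic difference is in $(\mathrm B)\subseteq(\mathrm C)$, where the paper takes a sublevel set $Y=|h|_X|^{-1}[\epsilon,\infty)$ and a stationary partial homotopy on $|h|_X|^{-1}(\epsilon)$ to glue without a separating function, whereas you pick an abstract neighborhood $\overline U$ of $A$ disjoint from $h^{-1}(0)$ and splice with a Urysohn function $\tau$; and in $(\mathrm C)\subseteq(\mathrm A)$ the paper works with the set $\{|e-f|<r/2\}$ and the constant $\epsilon/(2\|e\|)$ rather than your $N=\{|e-f|<r\}$ and $\kappa$, but the underlying convexity estimate is the same.
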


\begin{proof}We will prove that the sequence of inclusions (A)
$\subseteq$ (B) $\subseteq$ (C) $\subseteq$ (A)  holds.

\heading{(A) is a subset of (B):} This inclusion is trivial
since the restriction $g|_X$ of each strict $r$-perturbation $g$ is
homotopic to $f|_X$ as a map of pairs via the straight line homotopy
$F_t=t\,f+(1-t) g$, $t\in [0,1]$.

\heading{(B) is a subset of (C):} We start with a map of
pairs $h$ homotopic to $f|_X$ and want to construct an extension
$e$ of $f|_A$ with the same zero set. To that end, let us choose
a value $\epsilon>0$ such that $\min_{x\in A}|h(x)|\ge 2\epsilon$
and let us define $Y:=|\,h|_X\,|^{-1} [\epsilon,\infty)$. The
partial homotopy of $h$ on $|\,h|_X\,|^{-1}(\epsilon)\cup A$
that is stationary on $|\,h|_X\,|^{-1} (\epsilon)$ and equal
to the given homotopy $h|_A\sim f|_A$ on $A$ can be extended
to $H\:Y\times[0,1]\to\R^n\setminus\{0\}$ by the homotopy extension
property. The homotopy extension property holds because all the
considered maps take values in a triangulable space $\{x\in\R^n\:|x|\in[\epsilon,
M]\}$ for some $M\in\R$. 

The desired extension $e$ can be defined to be equal to $h$ on
$|\,h|_X\,|^{-1}[0,\epsilon]$ and equal to $H(\cdot,1)$ on $Y$.

\heading{(C) is a subset of (A):} We start with an extension
$e\:X\to\R^n$ of $f|_A$ and we want to construct a strict $r$-perturbation
$g$ of $f$ such that $g^{-1}(0)=e^{-1}(0)$. 

The set $U:=\{x\in X\: |e(x)-f(x)|<r/2\}$ is an open neighborhood
of $A$.
Due to the compactness of $|f|^{-1}[0, r]$, there exists $\epsilon\in
(0,r/2)$ such that $|f|^{-1}[r-\epsilon, r]\subseteq U$ (otherwise,
there would exist a sequence $x_n\notin U$ with $|f(x_n)| \to
r$ and a convergent subsequence $x_{j_n}\to x_0$, where $x_0\in
A\subseteq U$, contradicting $x_{j_n}\notin U$).

Let $\chi\:X\to[\epsilon/(2 \|e\|),1]$ be a separating function
for $A$ and  $W:=|f|^{-1} [0,r-\epsilon]$, that is, a continuous
function that is $\epsilon/(2\|e\|)$ on $W$ and $1$ on $A$. The
map $g\:X\to\R^n$ defined by 
$$g(x):=\chi(x)e(x)$$
is a strict $r$-perturbation of $f$. Indeed, for $x\in W$ we have $|g(x)-f(x)|\leq
\epsilon/2 +(r-\epsilon)<r$. Otherwise, $x\in U$ and
then$$|g(x)-f(x)|\leq \chi(x)\underbrace{|e(x)-f(x)|}_{\leq r/2}+
(1-\chi(x))\underbrace{|f(x)|}_{\leq r}<r.$$  
\end{proof}

For the proof of the simpler version of Proposition~\ref{p:characterization} with $\Zf$ replaced by $Z_r^<(f):=\{g^{-1}(0)\mid g\:K\to
\R^n$ s.t. $\|g-f\|<r\}$, the equality $(A)=(B)$
is crucial. 
\begin{proof}[Proof of Proposition~\ref{p:characterization} with $\Zf$ replaced by $Z_r^<(f)$]
It suffices to show that the homotopy class of a map $f\:(X,A)\to
(\R^n,\R^n\setminus\{0\})$ is determined by the homotopy class
of the restriction $f|_A$. That is, we prove that each two maps
of pairs  $f,g\:(X,A)\to (\R^n, \R^n\setminus\{0\})$  satisfying
$f|_A\sim g|_A$ are homotopic. By the homotopy extension property
for the pair
$(X,A)$, the partial homotopy $f|_A\sim g|_A$ of $f$ on $A$ can
be extended to $H\:X\times[0,1]\to \R^n$ with $H(\cdot,0)=f$.

By concatenating $H$ with the straight line homotopy between
$H(\cdot,1)$ and $g$ we obtain the desired homotopy. \end{proof}

The full proof of Proposition~\ref{p:characterization} follows directly from the following analog of the equality $(A)=(B)$ of Lemma~\ref{l:perturb-ext}.
Let us denote by $M$ the mapping cylinder of the inclusion $A \hookrightarrow X$, that is, $M:=X\times\{0\}\cup A\times [0,1]$.
\begin{lemma}\label{l:nonstrict}
$Z_r(f)$ equals to the family
$$
\{h^{-1}(0)\mid h:M\to\R^n\text{ s.t. } h|_{A\times\{1\}}=f|_A
\text{ and } h|_{A\times(0,1]}\text{ avoids } 0\}.$$
\end{lemma}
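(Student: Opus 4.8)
The strategy is to prove the two inclusions separately, mirroring the structure of the proof of Lemma~\ref{l:perturb-ext}. Write $M = X\times\{0\}\cup A\times[0,1]$ for the mapping cylinder, and call a map $h\colon M\to\R^n$ \emph{admissible} if $h|_{A\times\{1\}}=f|_A$ and $h$ avoids $0$ on $A\times(0,1]$. We must show $Z_r(f)$ equals the family $\{h^{-1}(0):h\text{ admissible}\}$. The key difference from the strict case is that the ``collar'' $A\times[0,1]$ of $M$ gives exactly the room needed to absorb the equality $\|g-f\|=r$: a root-free sliver where the perturbation can be pushed from its own value back to $f$ along a path that stays in $\R^n\setminus\{0\}$.

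\emph{Every $h^{-1}(0)$ with $h$ admissible lies in $Z_r(f)$.} Given an admissible $h$, restrict to $h_0:=h|_{X\times\{0\}}\colon X\to\R^n$; its zero set equals $h^{-1}(0)$ since $h$ avoids $0$ on $A\times(0,1]$. The map $h_0$ need not be an $r$-perturbation of $f$, but $h_0|_A$ is connected to $f|_A$ through the root-free homotopy $t\mapsto h(\cdot,t)$ on $A$, so $h_0|_A$ and $f|_A$ are homotopic as maps $A\to\R^n\setminus\{0\}$. I would like to invoke Lemma~\ref{l:perturb-ext}: its family (B) consists of zero sets of maps of pairs $(X,A)\to(\R^n,\R^n\setminus\{0\})$ homotopic to $f|_X$, and this equals family (A), the strict $r$-perturbations. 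The subtlety is that $h_0$ is a map of pairs homotopic to $f|_X$ only if $h_0|_A$ is homotopic to $f|_X$ restricted to $A$ \emph{rel nothing}; this is exactly what we just established. Hence $h^{-1}(0)=h_0^{-1}(0)$ is in family (B), hence in family (A) $\subseteq Z_r(f)$. (If one wants to avoid even the weak hypothesis $\min_{x\in A}|h(x)|>0$ being uniform, compactness of $A$ and continuity give it for free.)

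\emph{Every $Z\in Z_r(f)$ arises as $h^{-1}(0)$ for some admissible $h$.} Take $g$ with $\|g-f\|\le r$ and $Z=g^{-1}(0)$. Define $h$ on $X\times\{0\}$ by $h(x,0):=g(x)$. On the collar $A\times[0,1]$ we must interpolate from $g|_A$ at $t=0$ to $f|_A$ at $t=1$ while avoiding $0$. Since $x\in A$ means $|f(x)|=r$ and $|g(x)-f(x)|\le r$, the straight-line homotopy $s\mapsto (1-s)g(x)+sf(x)$ from $g(x)$ to $f(x)$ satisfies $|(1-s)g(x)+sf(x)|\ge |f(x)|-(1-s)|g(x)-f(x)|\ge r-(1-s)r = sr$, which is positive for $s>0$; at $s=0$ it equals $g(x)$, which could be $0$. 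Reparametrising so that the value $g(x)$ is attained only at $t=0$ (e.g.\ set $h(x,t):=(1-t)g(x)+tf(x)$, and note the only place this can vanish on $A\times[0,1]$ is $t=0$) gives an admissible $h$ with $h^{-1}(0)=g^{-1}(0)=Z$, since on $A\times(0,1]$ the estimate above forces $h\ne 0$, and on $X\times\{0\}$ the zero set is exactly $Z$.

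\emph{Main obstacle.} The one genuine point to get right is the first inclusion: passing from an admissible $h$ to an honest $r$-perturbation. The cleanest route is through Lemma~\ref{l:perturb-ext}'s equivalence $(A)=(B)=(C)$, but one must check the collar of $M$ really produces a \emph{homotopy through nonzero maps} of $h_0|_A$ to $f|_A$ and not merely a homotopy in $\R^n$; this is immediate from the admissibility condition $h|_{A\times(0,1]}\text{ avoids }0$, together with $h|_{A\times\{1\}}=f|_A$. After that, everything reduces to Lemma~\ref{l:perturb-ext} and the full Proposition~\ref{p:characterization} follows since the pair $(M, A\times\{1\})$ deformation retracts onto $(X,A)$ and $h|_{A\times(0,1]}$ avoiding $0$ records precisely the homotopy class of $h_0|_A$ in $[A,\R^n\setminus\{0\}]\cong[A,S^{n-1}]$.
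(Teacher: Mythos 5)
Your proof of the inclusion $Z_r(f)\subseteq\{h^{-1}(0)\mid h\text{ admissible}\}$ is correct and coincides with the paper's: the straight-line homotopy $(1-t)g(x)+tf(x)$ on $A\times[0,1]$ is precisely the paper's construction, and your estimate $|h(x,t)|\ge tr$ is the right one.

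The other inclusion, however, has a genuine gap. You pass from an admissible $h$ to $h_0:=h|_{X\times\{0\}}$ and claim that $h_0|_A$ is homotopic to $f|_A$ through maps into $\R^n\setminus\{0\}$, so that $h_0$ is a map of pairs $(X,A)\to(\R^n,\R^n\setminus\{0\})$ in the sense of family (B) of Lemma~\ref{l:perturb-ext}. But admissibility only requires $h$ to avoid $0$ on $A\times(0,1]$; it says nothing about $A\times\{0\}$. So $h_0$ can (and, in the interesting cases, does) vanish at points of $A$, and then $h_0$ is not a map of pairs at all and $h_0|_A$ is not even a map into $\R^n\setminus\{0\}$. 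Your parenthetical about compactness giving $\min_{x\in A}|h(x)|>0$ ``for free'' is exactly where the argument breaks: that minimum can be zero. A concrete example: $K=[-1,1]$, $f(x)=x$, $r=1$, $g(x)=x-1$, so $A=\{-1,1\}$ and $g(1)=0$ with $1\in A$; the corresponding $h$ is admissible, but $h_0=g$ vanishes on $A$. This is not an edge case to wave away --- it is precisely the phenomenon that distinguishes $Z_r(f)$ from $Z_r^<(f)$, and it is why Lemma~\ref{l:perturb-ext} (whose family (A) consists of \emph{strict} $r$-perturbations) cannot be applied directly.

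The paper's proof of this direction is substantially more involved for exactly this reason. It does not try to realize $h_0$ as a single map of pairs. Instead it introduces the nested neighborhoods $O_j=|h_0|^{-1}[0,1/j)$ of $h^{-1}(0)$, inductively builds homotopies $G_j$ on $X\setminus O_{j+1}$ via the homotopy extension property, defines $g_j$ that agree with $h_0$ inside $\bar O_{j+1}$ and are extensions of $f$ on $A\setminus O_j$, rescales $f$ to $f_j=\alpha_j f$ so that $|f_j|^{-1}(r)\subseteq A\setminus O_j$ stays away from the zero set, then uses Lemma~\ref{l:perturb-ext} to obtain strict $r$-perturbations $\beta_j g_j$ of $f_j$, and finally passes to a carefully controlled pointwise limit $g=\lim_j\beta_j g_j$, verifying $\|g-f\|\le r$ and $g^{-1}(0)=h^{-1}(0)$. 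To repair your proof you would need to supply something equivalent to this limiting construction; the direct appeal to family (B) does not go through.
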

The maps $h\:M\to\R^n$ as above will be called \emph{homotopy perturbations of $f$.} Clearly the family of zero sets of homotopy perturbations of a $g\sim f$ is equal to the family of zero sets of homotopy perturbations of $f$.  
\begin{proof}[Proof of Lemma~\ref{l:nonstrict}]
First assume that a map $g: K\to\R^n$ satisfies $\|g-f\|\leq r$. Then the map $h\:M\to\R^n$ that is equal to $g$ on $X$ and to the straightline homotopy between $g$ and $f$ on $A\times[0,1]$ is a homotopy perturbation of $f$ with $h^{-1}(0)=g^{-1}(0)$.

Conversely, assume that a homotopy perturbation $h\:M\to\R^n$ of $f$ is given.
We will denote by $h'$ the restriction $h|_X$. Let us define $O_j:=|h'|^{-1}[0,1/j)$.  These sets are open neighborhoods of $h^{-1}(0)$ in $X$, the intersection of all $O_j$ is the zero set $h^{-1}(0)$ and $\bar{O}_{j+1}\subseteq O_j$ (consequently $\bar{O}_{j+1}$ is disjoint from $X\setminus O_j$).
Let as define a partial homotopy $G_1'$ of $h'|_{X\setminus O_2}$ on $(A\setminus O_1)\cup \partial O_2)$ as follows.
We define $G_1'$ to be equal to $h$ on $(A\setminus O_1)\times [0,1]$ and to be the stationary homotopy equal to $h$ on $\partial O_2$.
The partial homotopy $|G_1'|$ is bounded from below and above by positive constants
$m$ and $M$, so we can define the target space of all maps to be the triangulated space $T=\{x\in\R^n:\,m\leq |x|\leq M\}$. 
The homotopy extension property of the pair 
$(X\setminus O_2, (A\setminus O_1)\cup \partial O_2)$ with respect to $T$ implies that $G_1'$ can be extended to a nowhere zero map
$G_1: (X\setminus O_2)\times[0,1]\to T$ such that $G_1(\cdot,0)=h'|_{X\setminus O_2}$.

Inductively, we define homotopies $G_j: (X\setminus O_{j+1})\times[0,1]\to\R^n\setminus \{0\}$ such that
\begin{itemize}
\item $G_j$ equals $G_{j-1}$ on $X\setminus O_{j-1}$,
\item $G_j=H$ on $A\setminus O_j$, 
\item $G_j$ is the stationary homotopy equal to $h|_{\partial O_{j+1}}$ on $\partial O_{j+1}$, and
\item $G_j(0)=h|_{X\setminus O_{j+1}}$.
\end{itemize}
Let $G_j'$ be a partial homotopy of $h'|_{X\setminus O_{j+1}}$ on $(X\setminus O_{j-1})\cup (A\setminus O_{j})\cup (\partial O_{j+1})$
defined by the first three properties of $G_j$ above. 
This is well defined and continuous, because $G_{j-1}$ equals $h$ on $A\setminus O_{j-1}$, and $\partial O_{j+1}$ is disjoint from
the other two parts. 
By the homotopy extension property, there exists
a homotopy $G_j: (X\setminus O_{j+1})\times[0,1]\to\R^n\setminus\{0\}$ satisfying all four properties above.

Let as define continuous maps $g_j: X\to\R^n$ by 
$$
g_j(x)=\begin{cases}
G_j(x,1)\quad\text{ for $x\in X\setminus O_{j+1}$ },\\
h'(x)\quad\text{ for $x\in \bar{O}_{j+1}$.}
\end{cases}
$$
These maps satisfy
\begin{itemize}
\item $g_j=g_{j-1}$ outside $O_{j-1}$,
\item $g_j^{-1}(0)=h^{-1}(0)$,
\item $g_j$ is an extension of $f|_{A\setminus O_j}$.
\end{itemize}

Let $\alpha_j: X\to [1-\frac{1}{j},1]$ be so that $\alpha_j=1$ outside $O_{j-1}$ and $\alpha_j<1$ inside $O_j$.
Define $f_j\:X\to\R^n$ by $f_j:=\alpha_j\,f$. We have that $|f_j|^{-1}(r)\subseteq A\setminus O_j$ and $\|f_j-f\|\to 0$.
The map $\alpha_j g_j$ is an extension of $f_j|_{A\setminus O_j}$ and hence an extension of $f_j|_{|f_j|^{-1}(r)}$, 
so by Lemma~\ref{l:perturb-ext}, some positive scalar multiple
$\beta_j g_j$ of $g_j$ is a strict $r$-perturbation of $f_j$. We will show that $\beta_j\:X\to(0,1]$ may be chosen so that they additionally satisfy
\begin{itemize}
\item $\beta_j=\beta_{j-1}$ outside $O_{j-1}$ (and hence $\beta_j g_j=\beta_{j-1} g_{j-1}$ outside $O_{j-1}$), 
\item $|\beta_j g_j|\leq \frac{1}{j}$ in $\bar{O}_{j}$, and
\item $|\beta_j g_j|\leq |\beta_{j-1}\,g_{j-1}|$ on $X\setminus O_j$.
\end{itemize}

Assume that such $\beta_1,\ldots, \beta_{j-1}$ have been chosen. 
Because $g_{j}=g_{j-1}$ and $f_{j}=f_{j-1}$ outside $O_{j-1}$, we have
$\beta_{j-1} g_{j}=\beta_{j-1} g_{j-1}$ and thus $\beta_{j-1}\,g_j$ is also a strict $r$-perturbation of $f_{j}$ in $X\setminus O_{j-1}$. 
If $\beta_{j}^{'}$ is so that $\beta_{j}'\,g_{j}$ is a global strict $r$-perturbation of $f_{j}$, 
we may define $\beta_{j}''$ to be a positive scalar extension of $\beta_{j}'$ in $\bar{O}_{j}$ and 
of $\beta_{j-1}$ on $X\setminus O_{j-1}$. Then $\beta_j'' g_j$
is a strict $r$-perturbation of $f_{j}$ on $\bar{O}_{j} \cup X\setminus O_{j-1}$.
Furthermore, $\beta_{j}''\,g_{j}$ is a strict $r$-perturbation of $f_{j}$ on some open neighborhood $U$ of $X\setminus O_{j-1}$.
By multiplying $\beta_{j}''$ with a $(0,1]$-valued function that equals $1$ on $X\setminus O_{j-1}$, and is small enough in
${O}_{j-1}\setminus U$, we get a positive function $\beta_j'''$ such that $\beta_{j}'''\leq \beta_{j}'$ in ${O}_{j-1}\setminus U$ 
and that $|\beta_{j}'''\,g_{j}|\leq \frac{1}{j}$ in $\bar{O}_{j}$.  
The resulting $\beta_{j}'''\,g_{j}$ is still a strict $r$-perturbation of $f_{j}$ on $X$ since each $\beta'''_j(x)g_j(x)$ is a strict convex combination of $\beta''_j(x)g_j(x)$ (less than $r$-far from $f_j(x)$) and $0$ (at most $r$-far from $f_j(x)$).
Finally, we multiply $\beta_j'''$ 
by some positive extension $X\to (0,1]$ of the function $\min \{1, |\beta_{j-1} g_{j-1}|/|\beta_{j} g_{j}|\}$ defined on $X\setminus O_j$
to get the desired function $\beta_j$ and then $\beta_j\,g_j$ is a strict $r$-perturbation of $f_j$ 
satisfying all the three properties above.

Let $g(x):=\lim_j \beta_j(x)\,g_j(x)$ for all $x\in X$. This is well defined and continuous. 
If $h(x)\neq 0$, then some neighborhood $U(x)$ of $x$ is contained in $X\setminus O_j$ for $j$ large enough
and for any $y\in U(x)$, $\beta_j(y) g_j(y)=\beta_{j+1}(y)\,g_{j+1}(y)=\ldots$ is stabilized. 
Further, if $h(x)=0$ then for each $j$, some neighborhood of $x$ is contained in $O_j$ and $|\beta_i\,g_i|\leq 1/j$ for each $i>j$ on this neighborhood.
This shows that $g(x)=0$ and $g$ is continuous in $x$. 

By construction, $g^{-1}(0)=h^{-1}(0)$ and the inequality $|\beta_j(x)\,g_j(x) - f_j(x)|<r$ implies that $|g(x)-f(x)|\leq r$ holds for each $x\in X.$

\end{proof}

\begin{remark}[On \emph{computability} of $[ f|_A$\zavr ]
\label{r:comp} 
By \cite[Theorem 1.1]{post}, when $\dim A\leq 2n-4$, the set
$[A,S^{n-1}]$ has a natural structure of an Abelian group with
a distinguished element $[f|_A]$ and its isomorphism type can
be computed. In the case $\dim K=2n-3$ we may not have $\dim
A\leq 2n-4$ but we can still provide a solution as follows. By
an easy inspection of the proofs above, both Lemma~\ref{l:perturb-ext}
and Proposition~\ref{p:characterization} hold with $A:=\partial \big(|f|^{-1}[r,
\infty)\big)$. Once $K$ is a $(2n-3)$-dimensional simplicial
complex and $f$ is simplexwise linear, then $\dim A=2n-4$ for
such choice of $A$.\footnote{However, elsewhere in this paper
we prefer the simpler definition $A:=|f|^{-1}(r)$.} 
\end{remark}

\section{Cap product in \v Cech (co)homology.}
\label{a:Cech}
Let $(X,A)$ be a pair of topological spaces and denote $H_*(X,A)$ ($H^*(X,A)$) the \v Cech (co)homology of $(X,A)$;
we assume a fixed coefficient group $G$ for the homology and the constant sheaf $G$ for the cohomology.
If $\mathcal{U}$ is a covering of $X$ and $\mathcal{V}$ a refinement, 
then $\mathcal{U}_A:=\{U\cap A:\,\,U\in\mathcal{U}\}$ is a covering of $A$ and $\mathcal{V}_A$ is its refinement.
We associate to $\mathcal{U}$ the nerv $N(\mathcal{U})$ of the covering and further define the nerv $N(\mathcal{U}_A)$
to be a simplicial complex whose $q$-simplices are all sets $\{U_0,\ldots,U_q\}\subseteq\mathcal{U}$ containing $q+1$ elements 
such that $(\cap_{j=0}^q U_j)\cap A\neq\emptyset$: this is a subcomplex of $N(\mathcal{U})$.
A map $p: \mathcal{V}\to\mathcal{U}$ that maps each set in $\mathcal{V}$ to a superset in $\mathcal{U}$, 
induces a map $p_A: \mathcal{V}_A\to\mathcal{U}_A$ 
and maps on the nerves $N(\mathcal{V})\to N(\mathcal{U})$, $N(\mathcal{V}_A)\to N(\mathcal{U}_A)$, which are 
simplicial maps between simplicial complexes. 
Any other choice $p': \mathcal{V}\to\mathcal{U}$ yields a homotopic map $N(\mathcal{V})\to N(\mathcal{U})$, so a subcovering induces 
well-defined maps $p_*$ resp. $p^*$ between  simplicial (co)homologies of the nerves 
$p_*: \mathcal{H}_*(X,A,\mathcal{V})\to\mathcal{H}_*(X,A,\mathcal{U})$ and 
$p^*: \mathcal{H}^*(X,A,\mathcal{U})\to\mathcal{H}^*(X,A,\mathcal{V})$. 
%
The \v Cech (co)homology is defined by $H_*(X,A):=\varprojlim_{\mathcal{U}} \mathcal{H}_*(X,A,\mathcal{U})$ and
$H^*(X,A):=\varinjlim_{\mathcal{U}} \mathcal{H}^*(X,A,\mathcal{U})$.

Let $\phi\in H^*(X,A)$ and $\beta\in H_*(X,A\cup B)$ for some $A,B\subseteq X$. 
The direct limit can be defined as a disjoint union of all $\mathcal{H}^*(X,A,\mathcal{U})$ factored by the relation $\lambda-p^*(\lambda)$
and the inverse limit can be defined as $\{\mu\in\prod \mathcal{H}_*(X,A\cup B, \mathcal{U}):\,\,p_*(\mu_{\mathcal{V}})=\mu_\mathcal{U}\}$.
Therefore, $\beta$ can be represented as a net 
$(\beta_{\mathcal{U}})_\mathcal{U}$ so that $\beta_\mathcal{U}=p_*\beta_\mathcal{V}$ for any refinement $\mathcal{V}$ of any covering 
$\mathcal{U}$, and $\phi$ can be represented by an element $\phi_{\tilde{\mathcal{U}}}$ contained in $\mathcal{H}^*(X,A,\tilde{\mathcal{U}})$ 
for some fine enough $\tilde{\mathcal{U}}$.

For each such $\tilde{\mathcal{U}}$, we define $\alpha_{\tilde{\mathcal{U}}}:=\phi_{\tilde{\mathcal{U}}} \frown \beta_{\tilde{\mathcal{U}}}\in\mathcal{H}_*(X,B,\tilde{\mathcal{U}})$
by means of simplicial cap product. If $\tilde{\mathcal{V}}$ is a refinement of $\tilde{\mathcal{U}}$, then the naturality of 
simplicial cap product---namely, the relation
$\phi_{\tilde{\mathcal{U}}}\frown p_*(\beta_{\tilde{\mathcal{V}}})=p_*(p^*(\phi_{\tilde{\mathcal{U}}})\frown \beta_{\tilde{\mathcal{V}}})$---implies that $\alpha_{\tilde{\mathcal{U}}}=p_* \alpha_{\tilde{\mathcal{V}}}$.
Therefore, we may consistently define $\alpha_\mathcal{U}$ for any $\mathcal{U}$ to be equal to $p_* \alpha_{\mathcal{V}}$
where $\mathcal{V}$ is a fine enough refinement of $\mathcal{U}$ such that  $\alpha_\mathcal{V}$ is already defined. This construction yields
an element $\alpha\in H_*(X,B)$ and we define the \v Cech cap product $\phi\frown\beta$ to be equal to $\alpha$.

Let $f: X\to X'$, $A,B\subseteq X$, $A', B'\subseteq X'$, $f(A)\subseteq A'$ and $f(B)\subseteq B'$.
We will show the naturality of $\frown$, that is, the relation $\phi'\frown f_*(\beta)=f_*(f^*(\phi')\frown \beta)$ 
for $\phi'\in H^*(X',A')$ and $\beta\in H_*(X, A\cup B)$.

Let $\mathcal{U}$ be a covering of $X$ and $\mathcal{U}'$ a covering of $X'$ so that for any $U\in\mathcal{U}$ there exists
$U'\in\mathcal{U}'$ such that $f(U)\subseteq U'$. Given $\mathcal{U'}$ and $f$, then any covering of $X$ 
admits a refinement $\mathcal{U}$ with this property.
Such map $f$ and a choice of the set $U'$ for any $U$ induces a simplicial map $f_\sharp: N(\mathcal{U})\to N(\mathcal{U'})$;
other assignment of the supersets $U'$ of $f(U)$ yields a homotopic map.
If $\mathcal{V}$ resp. $\mathcal{V}'$ is a refinement of $\mathcal{U}$ resp. $\mathcal{U}'$ such that
$f$ maps each $V\in\mathcal{V}$ into some $V'\in\mathcal{V}'$, then we have a square of simplicial maps between simplicial complexes
\begin{equation}
\hskip 3.5cm
\begin{diagram}
\label{e:ref_square}
N(\mathcal{U})   & \rTo^{f_\sharp} & N(\mathcal{U}') \\
\uTo^{p}         & \ruTo^{f_\sharp}       & \uTo_{p'}       \\ 
N(\mathcal{V})   & \rTo^{f_\sharp} & N(\mathcal{V}')
\end{diagram}
\end{equation}
that commute up to homotopy and induce commuting maps on the level of their simplicial (co)homology.
Thus, if $\beta\in H_*(X,B)$ is represented by $(\beta_\mathcal{U})_\mathcal{U}$, then $f_*v$ can be defined to by
the net that assigns to each $\mathcal{U}'$ the $f_*$-image of $\beta_{\mathcal{U}}\in\mathcal{H}(X, A\cup B, \mathcal{U})$ 
where $\mathcal{U}$ is fine enough so that $f$ maps elements of $\mathcal{U}$ to $\mathcal{U}'$. 
This is well defined, because if $\mathcal{V}$ is a refinement of $\mathcal{U}$, then the commutativity of the induced maps
in the upper left triangle of (\ref{e:ref_square}) implies $f_* \beta_\mathcal{V}=f_* p_* \beta_\mathcal{V}=f_* \beta_\mathcal{U}$;
further, any $\tilde{\mathcal{U}}$ such that $f$ maps its element to elements of $\mathcal{U'}$ has a common refinement with $\mathcal{U}$.
The commutativity of the induced maps in the lower right triangle of (\ref{e:ref_square}) implies that 
$p_* (f_* \beta)_{\mathcal{V}'}=(f_* \beta)_{\mathcal{U}'}$, hence $f_*\beta$ is a well defined element of $H_*(X,B)$.

Let $\phi'\in H^*(X',A')$ be represented by some $\phi_{\tilde{\mathcal{U'}}}'\in \mathcal{H}^*(X',A',\tilde{\mathcal{U}'})$.
By construction, $\phi'\frown f_* \beta$ is represented by the net that assigns to any refinement $\tilde{\mathcal{V}'}$
of $\tilde{\mathcal{U}'}$ the cap product of 
\begin{equation}
\label{e:upper_side}
\phi_{\tilde{\mathcal{V}'}}'\frown f_*(\beta_{\mathcal{U}})\in\mathcal{H}_*(X',B', \tilde{\mathcal{V}'})
\end{equation} where $\mathcal{U}$
is fine enough so that $f$ maps each $U\in\mathcal{U}$ to some $\tilde{V}'\in\tilde{\mathcal{V}'}$. It remains to show that
$f_*(f^* \phi'\frown \beta)$ is represented by the same object.
Let $\tilde{\mathcal{V}'}$ be a refinement of $\tilde{\mathcal{U}'}$, $\mathcal{U}$ be as in (\ref{e:upper_side}) and
let $\mathcal{V}$ be a refinement of $\mathcal{U}$ fine enough so that $f$ maps each of its sets to some element of $\tilde{\mathcal{V}'}$.
The term $f_*(\beta_\mathcal{U})$ in (\ref{e:upper_side}) equals $f_* (\beta_\mathcal{V})$ and
$f^*(\phi')$ can be represented by $f^* (\phi_{\tilde{\mathcal{V}'}}')\in\mathcal{H}^*(X,A,\mathcal{V})$.
It follows that $f_* (f^* \phi' \frown \beta)$ can be represented by a net that
assigns to $\tilde{\mathcal{V}'}$ the element $f_* (f^* (\phi_{\tilde{\mathcal{V}'}}')\frown \beta_{\mathcal{V}})\in\mathcal{H}_*(X',B',\tilde{\mathcal{V}'})$
which equals (\ref{e:upper_side}) by the naturality of simplicial cap product.
\end{document}